\documentclass{amsart}
\usepackage{amssymb}
\usepackage{hyperref}

\newcommand{\C}{\mathbb{C}}
\newcommand{\Z}{\mathbb{Z}}
\newcommand{\bK}{\mathbb{K}}
\newcommand{\bO}{\mathbb{O}}

\newcommand{\GL}{\mathrm{GL}}

\newcommand{\Gr}{\mathcal{G}r}
\newcommand{\Fl}{\mathcal{F}l}
\newcommand{\bGr}{\mathbf{Gr}}

\newcommand{\gen}{{\boldsymbol{\eta}}}
\newcommand{\spc}{{\mathbf{s}}}
\newcommand{\uL}{\underline{L}}
\newcommand{\bt}{\mathbf{t}}
\newcommand{\bL}{\mathbf{L}}
\newcommand{\ubL}{\underline{\mathbf{L}}}

\newcommand{\cA}{\mathcal{A}}
\newcommand{\bX}{\mathbb{X}}
\newcommand{\fS}{\mathfrak{S}}
\newcommand{\Wext}{W_{\mathrm{ext}}}
\newcommand{\co}{{\mathrm{co}}}

\newcommand{\fancy}{{\mathrm{fancy}}}
\newcommand{\dom}{\mathsf{dom}}
\newcommand{\cox}{\mathsf{c}}
\newcommand{\rot}{\mathrm{rot}}
\newcommand{\sP}{\mathsf{P}}
\newcommand{\Fix}{\mathsf{Fix}}

\newcommand{\ttimes}{\mathbin{\widetilde{\mathord{\times}}}}
\DeclareMathOperator{\spn}{span}

\DeclareMathOperator{\cyc}{cyc}
\newcommand{\mult}{\mathsf{m}}
\newcommand{\bmult}{\boldsymbol{\mathsf{m}}}
\newcommand{\cV}{\mathcal{V}}
\newcommand{\bV}{\mathbf{V}}

\newtheorem{thm}{Theorem}[section]
\newtheorem{lem}[thm]{Lemma}
\newtheorem{prop}[thm]{Proposition}
\newtheorem{cor}[thm]{Corollary}
\theoremstyle{remark}
\newtheorem{rmk}[thm]{Remark}
\newtheorem{ex}[thm]{Example}

\numberwithin{equation}{section}

\title[Global Schubert varieties]{On global Schubert varieties for the general linear group}
\author{Pramod N. Achar}
\email{pramod.achar@math.lsu.edu}
\address{Department of Mathematics, Louisiana State University, Baton Rouge, LA \ 70803, USA}

\thanks{This work was partially supported by NSF Grant Nos.~DMS-1802241, DMS-2202012, and DMS-2231492.}

\author{Andrea Bourque}
\email{abourque@uchicago.edu}
\address{Department of Mathematics, University of Chicago, Chicago, IL \ 60637, USA}

\begin{document}

\begin{abstract}
We give a ``lattice-theoretic'' description of the global Schubert variety for $\GL_n$ associated to any dominant coweight. 
\end{abstract}

\maketitle

\section{Introduction}

Let $G$ be a complex reductive group.  One can associate to $G$ its \emph{affine flag variety} $\Fl_G$ and its \emph{affine Grassmannian} $\Gr_G$, both of which play important roles in geometric representation theory.  The orbits of $G(\C[[t]])$ on $\Gr_G$ are labelled by the set of dominant coweights $\bX^+$ for $G$.  For $\lambda \in \bX^+$, let $\Gr_{G,\lambda}$ be the corresponding orbit.  Its closure $\overline{\Gr_{G,\lambda}}$ is called a \emph{spherical Schubert variety}.

The \emph{global affine Grassmannian}, denoted $\bGr_G$, is a space that exhibits $\Fl_G$ as a degeneration of $\Gr_G$.  In more detail, $\bGr_G$ is equipped with a structure map $\rho: \bGr_G \to \C$, and we define its \emph{generic part} and \emph{special fiber} by
\[
\bGr_{G,\gen} = \rho^{-1}(\C^\times)
\qquad\text{and}\qquad
\bGr_{G,\spc} = \rho^{-1}(0),
\]
respectively.  A key fact about $\bGr_G$ is that we have identifications
\[
\bGr_{G,\gen} = \C^\times \times \Gr_G
\qquad\text{and}\qquad
\bGr_{G,\spc} = \Fl_G.
\]
One version of $\bGr_G$ was introduced in~\cite{gai:cce}; we will work a variant developed in~\cite{pz:lmsv,zhu:ccpr}.

Now let $\lambda \in \bX^+$.  The variety $\C^\times \times \Gr_{G,\lambda}$ can be identified with a locally closed subvariety of (the generic part of) $\bGr_G$.  Its closure in $\bGr_G$, denoted by $\overline{\bGr_{G,\lambda}}$, is called a \emph{global Schubert variety}.  

For the remainder of the paper, we focus on the case $G = \GL_n(\C)$, and we omit the subscript $G$ from $\Fl$, $\Gr$, and $\bGr$.  It is well known that $\Fl$ and $\Gr$ admit descriptions in terms of \emph{lattices}.  These descriptions will be reviewed in Section~\ref{sec:lattices}.  We will also review the lattice-theoretic description of the spherical Schubert varieties $\overline{\Gr_\lambda}$.

The goal of this paper is to give a lattice-theoretic description of the global Schubert varieties $\overline{\bGr_\lambda}$.  (A statement along these lines has been known to researchers working on Shimura varieties for some time: see the remarks after Theorem~\ref{thm:main}.)  To formulate the main result, we need a lattice-theoretic description of $\bGr$ for $\GL_n(\C)$.  This description (to be explained in Section~\ref{sec:global}) realizes $\bGr$ as a subset
\[
\bGr \subset \C \times \underbrace{\Gr \times \cdots \times \Gr}_{\text{$n$ copies}},
\]
where the structure map $\rho: \bGr \to \C$ is projection onto the first factor. For a dominant coweight $\lambda$, we define a closed subset $X_\lambda \subset \bGr$ by
\[
X_\lambda = \bGr \cap (\C \times \overline{\Gr_\lambda} \times \cdots \times \overline{\Gr_\lambda}).
\]
See Remark~\ref{rmk:global-exp} for a very concrete description of $X_\lambda$.  The main result of this paper is the following characterization of global Schubert varieties for $\GL_n(\C)$.

\begin{thm}\label{thm:main}
For any dominant coweight $\lambda$, we have $\overline{\bGr_\lambda} = X_\lambda$.
\end{thm}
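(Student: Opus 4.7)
The plan is to prove the two inclusions of Theorem~\ref{thm:main} separately. The inclusion $\overline{\bGr_\lambda} \subseteq X_\lambda$ should be a formality, and essentially all of the work lies in the reverse inclusion $X_\lambda \subseteq \overline{\bGr_\lambda}$.

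For the easy direction I would first observe that $X_\lambda$ is closed in $\bGr$ by construction (since $\overline{\Gr_\lambda}$ is closed in $\Gr$), so it suffices to check that $\C^\times \times \Gr_\lambda \subseteq X_\lambda$. Unpacking the embedding $\bGr \hookrightarrow \C \times \Gr^n$ of Section~\ref{sec:global}, the image of $(t, L) \in \bGr_\gen$ in $\Gr^n$ consists of $n$ lattices each lying in the same $\GL_n(\C[[s]])$-orbit as $L$; hence $L \in \Gr_\lambda$ forces every coordinate to lie in $\overline{\Gr_\lambda}$.

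For the reverse inclusion, the generic fibers of both sides coincide (both equal $\C^\times \times \overline{\Gr_\lambda}$), so the entire content is in the special fiber $\bGr_\spc = \Fl$. I need to show that every flag $(L_1 \subset \cdots \subset L_n) \in \Fl$ with each $L_i \in \overline{\Gr_\lambda}$ is a limit, as $t \to 0$, of points of $\C^\times \times \Gr_\lambda \subset \bGr$. Each of the two closed subsets of $\Fl$ being compared is a union of Schubert cell closures $\overline{\Fl_w}$ indexed by a subset of $\Wext$. The subset indexing $X_\lambda \cap \Fl$ admits a direct combinatorial description obtained by translating the membership $L_i \in \overline{\Gr_\lambda}$ into a cyclic family of dominance inequalities on the translation part of $w$ (the \emph{permissible} set). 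The subset indexing $\overline{\bGr_\lambda} \cap \Fl$ is determined by closure relations in $\bGr$ starting from the translation element $t^\lambda$ (the \emph{admissible} set).

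The identification of these two sets is the main obstacle and the heart of the theorem; it is essentially the classical ``admissible $=$ permissible'' theorem of Kottwitz--Rapoport, established in this generality for $\GL_n$ by Haines--Ng\^o. I would either invoke this directly, or, to keep the paper self-contained and more geometric, construct an explicit one-parameter deformation: for each permissible flag $(L_1, \ldots, L_n)$ on the special fiber, exhibit a family of points of $\bGr$ lying in $\bGr_\lambda$ for $t \neq 0$ and specializing to the given flag at $t = 0$. Building such families would rely on the lattice coordinatization of $\overline{\Gr_\lambda}$ reviewed in Section~\ref{sec:lattices}, and would most naturally proceed by induction on the length of $w$ (or equivalently on $w$ within the Bruhat order on the permissible set).
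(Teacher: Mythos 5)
Your framing of the problem is correct: the containment $\overline{\bGr_\lambda} \subseteq X_\lambda$ is a formality (your argument for it is fine), the two sets agree over $\C^\times$, and the whole content is showing that every $\lambda$-permissible point of the special fiber $\Fl$ lies in $\overline{\bGr_\lambda}$. Your identification of the combinatorial obstruction --- equating the permissible set describing $X_\lambda \cap \bGr_\spc$ with the ``admissible'' set describing $\overline{\bGr_\lambda} \cap \bGr_\spc$ --- also matches how the difficulty is usually framed in the local-models literature, and the paper itself notes (Remark~\ref{rmk:permissible}) that the theorem is essentially equivalent to Haines--Ng\^o's result~\cite{hn:aasf} once one accepts Zhu's description~\cite{zhu:ccpr} of the special fiber.

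However, for the hard direction you have only a plan, not a proof, and the plan you sketch is not the route the paper takes. You propose either (a) citing Haines--Ng\^o, which the paper deliberately avoids in order to stay self-contained and elementary, or (b) constructing explicit one-parameter families for \emph{every} $\lambda$-permissible flag by induction on Bruhat length within the permissible set. Option (b) is speculated about in the introduction (in connection with G\"ortz's work) but explicitly \emph{not} pursued here. What the paper actually does is construct explicit one-parameter families only in the fundamental/minuscule case $\lambda = \varpi_k$ (Section~\ref{sec:minuscule}, Theorem~\ref{thm:main-fund}); for general $\lambda$ it instead inducts on $\lambda_1 - \lambda_n$ by writing $\lambda = \mu + \varpi_k$ and passing through the global convolution space $X_\mu \ttimes_\C X_{\varpi_k}$ (Section~\ref{sec:conv}). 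The crucial combinatorial ingredient enabling this reduction is Theorem~\ref{thm:convolve-comb}: every $\lambda$-permissible alcove $x$ admits a $\mu$-permissible alcove $y$ with $x$ in relative position $\varpi_k$ relative to $y$; this in turn rests on the dominance lemma~\ref{lem:dom-subtract} and a careful analysis of total orders compatible with rotation of alcoves. None of this machinery appears in your sketch, and you would need to either develop something equivalent or fill in the explicit deformation for all permissible alcoves --- neither of which is done in the proposal. So the proposal correctly locates the problem but does not solve it, and its proposed inductive scheme (Bruhat length) differs from the paper's (the gap $\lambda_1-\lambda_n$, via convolution).
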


Readers who are familiar with the theory of Shimura varieties will recognize this statement as asserting the ``topological flatness of (certain) local models of Shimura varieties.''  The ``local models'' in question are certain schemes over $\mathrm{Spec}\  \Z_p$, but when the same constructions are carried out with $\Z_p$ replaced by $\C[[t]]$, the resulting schemes are essentially the same as global Schubert varieties. The idea of describing local models of Shimura models in terms of lattices was first developed systematically by Rapoport--Zink~\cite{rz:pspdg}.  In this setting, for $\lambda$ minuscule, Theorem~\ref{thm:main} was proved by G\"ortz~\cite{goe:fcmsv}.  For general $\lambda$, one can deduce Theorem~\ref{thm:main} from a result of Haines--Ng\^o~\cite{hn:nclm} on nearby cycles for local models; see~\cite[\S7]{hai:isvbr} for a sketch of this argument.  A similar short proof of Theorem~\ref{thm:main} can be obtained by combining results of Kottwitz--Rapoport~\cite{kr:ma}, Haines--Ng\^o~\cite{hn:aasf}, and Zhu~\cite{zhu:ccpr}.  (We are grateful to T.~Haines for these pointers to the literature.)

The present paper aims to give a self-contained proof of Theorem~\ref{thm:main} using only basic topology and linear algebra.  (In particular, there is no mention of either Shimura varieties or nearby cycles later in the paper.)  We hope that this may make the statement more accessible to readers who are unfamiliar with the theory of local models.

Here is an outline of the proof.  Once the preliminary definitions are in place, it is relative easy to see that $\overline{\bGr_\lambda}$ is contained in $X_\lambda$, and that they agree on the generic part:
\[
\bGr_\gen \cap \overline{\bGr_\lambda} = \bGr_\gen \cap  X_\lambda = \C^\times \times \overline{\Gr_\lambda}.
\]
Thus, to prove Theorem~\ref{thm:main}, the main task to is to show that the special fiber of $X_\lambda$ is not too big.  We do this as follows:
\begin{enumerate}
\item When $\lambda$ is minuscule, we write down explicit formulas exhibiting each point of $X_\lambda$ as a limit of a $1$-parameter family in $\bGr_\gen \cap \overline{\bGr_\lambda}$.\label{it:intro1}
\item For the general case, we use the geometry of (global) \emph{convolution spaces} to obtain the result by induction on $\lambda$.\label{it:intro2}
\end{enumerate}
Step~\ref{it:intro1} is equivalent to~\cite[Proposition~4.16]{goe:fcmsv}, which is also proved by an explicit calculation (but not the same as ours).  It may be possible to treat general $\lambda$ by a similar method (using~\cite[Lemma~2]{goe:tflm}), but we have not pursued this approach here.


\subsection*{Outline of the paper}
Section~\ref{sec:lattices} contains background and notation related to lattices, $\Fl$, and $\Gr$. Section~\ref{sec:global} contains background related to $\bGr$ and global Schubert varieties.  The minuscule case of Theorem~\ref{thm:main} is proved in Section~\ref{sec:minuscule}.  Next, Section~\ref{sec:alcoves} establishes a technical combinatorial statement that is related to the geometry of convolution spaces.  Finally, in Section~\ref{sec:conv}, we review the construction of convolution spaces, and we complete the proof of Theorem~\ref{thm:main}.

\section{Background on lattices}
\label{sec:lattices}

\subsection{Group-theoretic preliminaries}

Let $\bK = \C((t))$ be the field of formal Laurent series over $\C$ with indeterminate $t$, and let $\bO = \C[[t]]$ be the ring of formal power series.  Fix an integer $n \ge 1$.  We will consider the groups $\GL_n(\C)$, $\GL_n(\bO)$, and $\GL_n(\bK)$.  For $1 \le i \le n$, let
\[
e_i = (0,\ldots,0,1,0, \ldots ,0) \in \Z^n
\]
be the vector whose $i$-th coordinate is $1$, with all other coordinates $0$.  In some contexts, we may also regard $e_i$ as a vector in $\C^n$ or $\bK^n$.

Let $T \subset \GL_n(\C)$ be the maximal torus consisting of diagonal matrices, and let $\bX$ be its coweight lattice.  Identify $\bX$ with $\Z^n$ as follows: if $\lambda = (\lambda_1, \ldots, \lambda_n) \in \Z^n$, the corresponding cocharacter $\C^\times \to T$ is 
\[
t \mapsto 
\left[
\begin{smallmatrix}
t^{\lambda_1} \\
& \ddots \\
&& t^{\lambda_n}
\end{smallmatrix}
\right].
\]
As a shorthand, we write $\bt^\lambda$ for the matrix above.  Then $\bt^\lambda$ can be regarded as an element of $\GL_n(\bK)$.  We define the \emph{size} of a coweight $\lambda$ to be
\[
|\lambda| = \lambda_1 + \cdots + \lambda_n.
\]

For $0 \le i \le n$, let
\[
\varpi_i = e_1 + \ldots + e_i = (\underbrace{1,\ldots,1}_{\text{$i$ terms}}, \underbrace{0,\ldots,0}_{\text{$n-i$ terms}}) \in \bX.
\]

Let $B^- \subset \GL_n(\C)$ be the subgroup consisting of lower-triangular matrices.  We define \emph{dominant} coweights with respect to the opposite Borel subgroup to $B^-$: thus, a coweight $\lambda = (\lambda_1, \ldots, \lambda_n)$ is dominant if
\[
\lambda_1 \ge \cdots \ge \lambda_n.
\]
The set of dominant coweights is denoted by $\bX^+ \subset \bX$.  Let $\preceq$ be the usual partial order on $\bX$: we set $\mu \preceq \lambda$ if $\lambda - \mu$ is a nonnegative linear combination of the simple coroots $(0,\ldots, 0,1,-1,0,\ldots,0)$.  In concrete terms, for $\mu = (\mu_1, \ldots, \mu_n)$ and $\lambda = (\lambda_1, \ldots, \lambda_n)$, we have
\[
\mu \preceq \lambda
\qquad\text{if}\qquad
\text{$|\mu| = |\lambda|$\quad  and\quad $\sum_{i=1}^k \mu_i \le \sum_{i=1}^k \lambda_i$\quad for $1 \le k \le n$}.
\]

Let $W = \fS_n$ be the Weyl group of $\GL_n(\C)$, identified with the symmetric group on $n$ letters.  This group acts on $\bX$ in the obvious way.  For each $\lambda \in \bX$, there is a unique dominant coweight in its $\fS_n$-orbit.  We denote it by
\[
\dom(\lambda) \in \fS_n \cdot \lambda \cap \bX^+.
\]
Let
\[
\Wext= \fS_n \ltimes \bX
\]
be the extended affine Weyl group.  

For $\sigma \in \fS_n$, let $\dot\sigma \in \GL_n(\C)$ be the corresponding permutation matrix.  Now suppose $w = (\sigma,\lambda) \in \Wext$.  We set
\[
\dot w = \dot \sigma \bt^\lambda \in \GL_n(\bK).
\]

There is a homomorphism $\mathrm{ev}_0: \GL_n(\bO) \to \GL_n(\C)$ given by $t \mapsto 0$.  We set
\[
I = (\mathrm{ev}_0)^{-1}(B^-) \subset \GL_n(\bO).
\]
This is an \emph{Iwahori subgroup} of $\GL_n(\bO)$.

\subsection{Lattices}
\label{ss:lattices}

A \emph{lattice} in the vector space $\bK^n$ is defined to be a free $\bO$-submodule $L \subset \bK^n$ of rank $n$.  In particular, $\bO^n$ is called the \emph{standard lattice}.  For $\lambda \in \bX$, let
\[
\bL^\lambda = \bt^\lambda \cdot \bO^n = \text{the lattice spanned by the columns of $\bt^\lambda$}.
\]
For example, we have
\[
\bL^{\varpi_i} = (t\bO)^i \times \bO^{n-i}.
\]

Given a lattice $L$, let $f_1, \ldots, f_n \in \bK^n$ be an $\bO$-basis for $L$.  Then one can form the matrix whose columns are $f_1, \ldots, f_n$.  Then there is an integer $\nu(L)$ such that
\[
\det \begin{bmatrix} f_1 & \cdots & f_n \end{bmatrix} = t^{\nu(L)} \cdot(\text{a unit in $\bO$}).
\]
Equivalently, $\nu(L)$ is the smallest integer such that $t^{\nu(L)}$ occurs with nonzero coefficient in $\det \begin{bmatrix} f_1 & \cdots & f_n \end{bmatrix}$.  This integer is independent of the choice of basis.  It is called the \emph{valuation} of $L$.  We clearly have
\[
\nu(\bL^\lambda) = |\lambda|.
\]

If $L' \subset L$ are two lattices, then $L/L'$ is a finite-dimensional $\C$-vector space with dimension $\nu(L') - \nu(L)$.  A \emph{lattice chain} is a sequence of lattices
\[
\uL = (L_1 \supset L_2 \supset \cdots \supset L_n \supset tL_1)
\]
such that $\dim L_i/L_{i+1} = 1$ for $1 \le i \le n-1$.  (This condition forces $\dim L_n/tL_1 = 1$ as well.)  The \emph{standard lattice chain}, denoted by $\ubL^e$, is the lattice chain
\[
\ubL^e = (\bL^0 \supset \bL^{\varpi_1} \supset \bL^{\varpi_2} \supset \cdots \supset \bL^{\varpi_{n-1}} \supset t\bL^0 = \bL^{\varpi_n}).
\]
For $w = (\sigma,\lambda) \in \Wext$, we set
\[
\ubL^w = \dot w \cdot \ubL^e = (\bL^{\sigma(\lambda)} \supset \bL^{\sigma(\lambda+\varpi_1)} \supset \cdots \supset \bL^{\sigma(\lambda + \varpi_{n-1})} \supset t\bL^{\sigma(\lambda)}).
\]

\subsection{The affine Grassmannian and the affine flag variety}

The \emph{affine Grassmannian} $\Gr$ and the \emph{affine flag variety} $\Fl$ are defined by
\[
\begin{aligned}
\Gr &= \{ \text{lattices in $\bK^n$} \} &&= \GL_n(\bK)/\GL_n(\bO), \\
\Fl &= \{ \text{lattice chains in $\bK^n$}\} &&= \GL_n(\bK)/I.
\end{aligned}
\]
The latter identifications above come from the following observation: $\GL_n(\bK)$ acts transitively on the set of all lattices and the set of all lattice chains, and the stabilizer of the standard lattice (resp.~standard lattice chain) is $\GL_n(\bO)$ (resp.~$I$).

Both $\Gr$ and $\Fl$ can be made into ind-projective ind-varieties: that is, each can be written as a union $\bigcup_{i \ge 1} X_i$, where each $X_i$ is a projective complex variety, and the inclusion maps $X_i \to X_{i+1}$ are closed immersions.  See, for instance,~\cite{kum:kmg}.  

It is well known that the set of $\GL_n(\bO)$-orbits on $\Gr$ is in bijection with $\bX^+$.  Explicitly, for $\lambda \in \bX^+$, the corresponding orbit is given by
\[
\Gr_\lambda = \GL_n(\bO) \cdot \bL^\lambda \subset \Gr.
\]
On the other hand, the $I$-orbits on $\Gr$ are in bijection with all of $\bX$.  We have
\begin{equation}\label{eqn:gr-i-orbit}
\Gr_\lambda = \bigcup_{\lambda' \in \fS_n \cdot \lambda} I \cdot \bL^{\lambda'}.
\end{equation}
The closure $\overline{\Gr_\lambda}$ is called a \emph{spherical Schubert variety}.  For $\mu, \lambda \in \bX^+$, we have
\[
\Gr_\mu \subset \overline{\Gr_\lambda}
\qquad\text{if and only if}\qquad
\mu \preceq \lambda.
\]
Combining this with~\eqref{eqn:gr-i-orbit}, we find that
\begin{equation}\label{eqn:gr-i-closure}
I \cdot \bL^\mu \subset \overline{\Gr_\lambda} 
\qquad\text{if and only if}\qquad
\dom(\mu) \preceq \lambda.
\end{equation}

Similarly, the set of $I$-orbits on $\Fl$ is in bijection with $\Wext$: we have
\[
\Fl = \bigcup_{w \in \Wext} I \cdot \ubL^w.
\]

\begin{rmk}\label{rmk:spherical}
For a dominant coweight $\lambda \in \bX^+$, one can show that
\[
\Gr_\lambda = \left\{ L \in \Gr \,\Big|\,
\begin{array}{c}
\text{$\nu(L) = |\lambda|$, $t^{\lambda_1}\bO^n \subset L \subset t^{\lambda_n}\bO^n$, and for $\lambda_1 \ge i \ge \lambda_n$,} \\
\text{$\dim (L \cap t^i\bO^n)/t^{\lambda_1+1}\bO^n = \sum_{j = i}^{\lambda_1} |\{ k \mid \lambda_k \le i \}|$} 
\end{array}
\right\},
\]
and
\[
\overline{\Gr_\lambda} = \left\{ L \in \Gr \,\Big|\,
\begin{array}{c}
\text{$\nu(L) = |\lambda|$, $t^{\lambda_1}\bO^n \subset L \subset t^{\lambda_n}\bO^n$, and for $\lambda_1 \ge i \ge \lambda_n$,} \\
\text{$\dim (L \cap t^i\bO^n)/t^{\lambda_1+1}\bO^n \ge \sum_{j = i}^{\lambda_1} |\{ k \mid \lambda_k \le i \}|$} 
\end{array}
\right\}.
\]
See, for instance,~\cite[Exercise~9.1.5]{a:psart} (which has a minor typo in it).
\end{rmk}

\begin{ex}\label{ex:locfund}
For $\lambda = \varpi_k$ with $1 \le k \le n-1$, Remark~\ref{rmk:spherical} reduces to
\[
\Gr_{\varpi_k} = \overline{\varpi_k} = \{ L \in \Gr \mid \text{$t\bO^n \subset L \subset \bO^n$ and $\dim L/t\bO^n = n - k$} \}.
\]
Identify $\bO^n/t\bO^n \cong \C^n$, and let $G(n,n-k)$ be the usual Grassmannian of $(n-k)$-dimensional subspaces in $\C^n$.  Then we have an isomorphism
\[
\Gr_{\varpi_k} \cong G(n,n-k)
\]
given by $L \mapsto L/t\bO^n \in G(n,n-k)$.  
\end{ex}

\subsection{Kottwitz--Rapoport alcoves}

Define a new partial order $\le_\co$ (here, ``$\co$'' stands for ``coordinatewise'') on $\bX$ as follows: for coweights $\lambda = (\lambda_1,\ldots,\lambda_n)$ and $\mu = (\mu_1, \ldots, \mu_n)$, we set
\[
\lambda \le_\co \mu
\qquad\text{if}\qquad
\text{$\lambda_i \le \mu_i$ for $1 \le i \le n$.}
\]

Now let $x = (x^{(1)}, \ldots, x^{(n)}) \in \bX^n$ be an $n$-tuple of coweights.  Following Kott\-witz--Rapoport~\cite{kr:ma}, we call $x$ an \emph{alcove} if the following conditions hold:
\begin{gather*}
x^{(1)} \le_\co x^{(2)} \le_\co \cdots \le_\co x^{(n)} \le_\co x^{(1)} + \varpi_n, \\
|x^{(2)}| = |x^{(1)}|+1,
\quad
|x^{(3)}| = |x^{(2)}|+1,
\quad
\ldots
\quad
|x^{(n)}| = |x^{(n-1)}|+1.
\end{gather*}
When working with alcoves, it is sometimes convenient to adopt the convention that $x^{(n+1)} = x^{(1)} + \varpi_n$.

The definition implies that $x^{(i+1)}$ differs from $x^{(i)}$ in exactly one coordinate.  For an alcove $x$, let $\sP(x) \in \fS_n$ be the permutation characterized by the property that
\[
x^{(i+1)}_j =
\begin{cases}
x^{(i)}_j + 1 & \text{if $j = \sP(x)(i)$,} \\
x^{(i)}_j & \text{otherwise.}
\end{cases}
\]
Alternatively, $x^{(i+1)} = x^{(i)} + e_{\sP(x)(i)} = x^{(i)} + \sP(x)e_i$.  By induction on $i$, we find
\[
x^{(i)} = x^{(1)} + \sP(x)\varpi_{i-1}.
\]
The \emph{base alcove} is the alcove
\[
\omega = (\varpi_0, \varpi_1, \ldots, \varpi_{n-1}).
\]
It has the property that $\sP(\omega)$ is the identity permutation.

Let $\cA \subset \bX^n$ be the set of alcoves.  There is an action of $\Wext$ on $\cA$, given by
\[
(\sigma,\lambda) \cdot (x^{(1)}, \ldots, x^{(n)}) = (\sigma(\lambda + x^{(1)}), \ldots, \sigma(\lambda + x^{(n)})).
\]
As explained in~\cite[\S3.2]{kr:ma}, this action is simply transitive, so there is a bijection
\[
\Wext \overset{\sim}{\leftrightarrow} \cA
\qquad\text{given by}\qquad
w \mapsto w \cdot \omega.
\]
The inverse map is given by $x \mapsto( \sP(x), \sP(x)^{-1}\cdot x^{(1)})$.

This bijection gives us an alternative way to label the $I$-orbits on $\Fl$. In particular, if $x = (x^{(1)}, \ldots, x^{(n)})$ is an alcove, let
\[
\ubL^x = (\bL^{x^{(1)}}\supset \bL^{x^{(2)}} \supset \cdots \supset \bL^{x^{(n)}} \supset t\bL^{x^{(1)}}).
\]
If $x = w \cdot \omega$ for $w \in \Wext$, then $\ubL^x = \ubL^w$.

Let $\lambda \in \bX^+$ be a dominant coweight.  An alcove $x = (x^{(1)}, \ldots, x^{(n)})$ is said to be \emph{$\lambda$-permissible} if we have
\[
\dom(x^{(i)} - \varpi_{i-1}) \preceq \lambda
\qquad\text{for $1 \le i \le n$.}
\]
For $x, y \in \cA$, we say that $x$ is in \emph{relative position $\lambda$} with respect to $y$ if we have
\[
\dom(x^{(i)} - y^{(i)}) = \lambda
\qquad\text{for $1 \le i \le n$.}
\] 

\section{The global affine Grassmannian}
\label{sec:global}

For $1 \le i \le n$ and $y \in \C$, let $\theta_i(y): \bK^n \to \bK^n$ be the linear map defined by
\[
\theta_i(y)(v_1, \ldots, v_n) = ((t-y)v_1, (t-y)v_2, \ldots, (t-y)v_i, v_{i+1}, \ldots, v_n).
\]
In other words, $\theta_i(y)$ is defined by substituting $t-y$ for $t$ in the matrix $\bt^{\varpi_i}$.  Since $t - y \ne 0$, the map $\theta_i(y)$ is always an isomorphism of $\bK$-vector spaces.  It restricts to a map $\theta_i(y): \bO^n \to \bO^n$.  The latter map is an isomorphism if and only if $y \ne 0$ (equivalently, if and only if $t -y$ is invertible in $\bO$). 

We define a \emph{lattice family} $F$ to be a tuple
\[
F = (y, L_1, \ldots, L_n)
\]
where $y \in \C$ and $L_1, \ldots, L_n \in \Gr$, and where the following two conditions hold:
\begin{gather}
\nu(L_1) = \cdots = \nu(L_n), \label{eqn:bgr-val} \\
L_1 \supset \theta_1(y)(L_2) \supset \theta_2(y)(L_3) \supset \cdots \supset \theta_{n-1}(y)(L_n) \supset (t-y)L_1. \label{eqn:bgr-chain}
\end{gather}

We define the \emph{global affine Grassmannian} to be the set
\begin{equation}\label{eqn:bgr-defn}
\bGr = \{ \text{lattice families} \} \subset \C \times \underbrace{\Gr \times \cdots \times \Gr}_{\text{$n$ copies}}.
\end{equation}
This set inherits the structure of an ind-variety from  $\C \times \Gr \times \cdots \times \Gr$.  There is an obvious map $\rho: \bGr \to \C$ given by $\rho(y, L_1, \ldots, L_n) = y$.  Let
\[
\bGr_\gen = \rho^{-1}(\C \smallsetminus \{0\})
\qquad\text{and}\qquad
\bGr_\spc = \rho^{-1}(0).
\]
We call $\bGr_\gen$ the \emph{generic part} of $\bGr$, and we call $\bGr_\spc$ the \emph{special fiber}.

\begin{lem}\label{lem:bgr-gen-spc}
There are isomorphisms of ind-varieties
\[
\bGr_\gen \cong \C^\times \times \Gr,
\qquad
\bGr_\spc \cong \Fl.
\]
\end{lem}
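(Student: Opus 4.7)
The plan is to handle the two isomorphisms separately, using the crucial observation that $t-y$ is a unit in $\bO$ if and only if $y \ne 0$.

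For the generic part, I would first note that for $y \in \C^\times$, each $\theta_i(y)$ is the multiplication-by-a-unit operator on the first $i$ coordinates and the identity elsewhere, so $\theta_i(y) \in \GL_n(\bO)$ and in particular preserves valuations of lattices. Similarly, $(t-y)L_1$ has the same valuation as $L_1$. Combined with condition~\eqref{eqn:bgr-val}, all terms in the chain~\eqref{eqn:bgr-chain} therefore share a common valuation, so the inclusions must all be equalities. This forces $L_{i+1} = \theta_i(y)^{-1} L_1$ for each $i$, so each $L_{i+1}$ is determined by $(y,L_1)$. The inverse map $\C^\times \times \Gr \to \bGr_\gen$ is then given explicitly by
\[
(y,L) \mapsto \bigl(y,\, L,\, \theta_1(y)^{-1}L,\, \theta_2(y)^{-1}L,\, \ldots,\, \theta_{n-1}(y)^{-1}L\bigr),
\]
and one verifies that this tuple does satisfy the chain condition and the common-valuation condition, i.e.\ lies in $\bGr_\gen$.

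For the special fiber, I would perform the change of variables $M_i = \theta_{i-1}(0) L_i = \bt^{\varpi_{i-1}} L_i$ (with the convention $\theta_0(0) = \mathrm{id}$, so $M_1 = L_1$). Since $\bt^{\varpi_{i-1}}$ is a $\bK$-linear automorphism, each $M_i$ is a lattice, and a direct valuation computation shows $\nu(M_i) = \nu(L_1) + (i-1)$. Condition~\eqref{eqn:bgr-chain} at $y=0$ becomes exactly $M_1 \supset M_2 \supset \cdots \supset M_n \supset tM_1$, and the valuation jumps of $1$ at every step give $\dim(M_i/M_{i+1}) = 1$. Hence $(M_1,\ldots,M_n)$ is a lattice chain in the sense of Section~\ref{ss:lattices}. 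The inverse is $(M_1,\ldots,M_n) \mapsto (0, M_1, \bt^{-\varpi_1}M_2, \ldots, \bt^{-\varpi_{n-1}}M_n)$, which lands in $\bGr_\spc$ by reversing the same computation; this gives the bijection $\bGr_\spc \leftrightarrow \Fl$.

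The remaining task is the ind-variety statement. For the generic part, the map $\bGr_\gen \to \C^\times \times \Gr$ is the restriction of the projection from $\C \times \Gr^n$ to the first two factors, which is algebraic, and its inverse is built from the algebraic operations $L \mapsto \bt^{-\varpi_i}_{y} L$ (where $\bt^{\varpi_i}_y$ is obtained from $\bt^{\varpi_i}$ by substituting $t-y$ for $t$), which depend algebraically on $y \in \C^\times$. The special fiber case is similar, with $\bt^{\pm\varpi_{i-1}}$ acting by a fixed element of $\GL_n(\bK)$. I expect the main obstacle to be purely bookkeeping at the level of ind-variety structure, since the geometric content reduces to the elementary linear-algebraic observations above; it should suffice to exhibit both maps as restrictions of algebraic morphisms on the ambient ind-schemes.
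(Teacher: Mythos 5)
Your proof is correct and matches the paper's argument essentially step for step: for $y \ne 0$ you observe that $\theta_i(y) \in \GL_n(\bO)$ forces the chain inclusions to be equalities and write down the same inverse map; for $y = 0$ you identify the chain $(L_1 \supset \theta_1(0)L_2 \supset \cdots \supset \theta_{n-1}(0)L_n \supset tL_1)$ as a lattice chain via a valuation count. The only difference is that you spell out the valuation bookkeeping (and the algebraicity remark) a bit more explicitly than the paper does.
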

\begin{proof}
When $y \ne 0$, the maps $\theta_i(y)$ all lie in $\GL_n(\bO)$; in particular, the action of $\theta_i(y)$ on $\Gr$ preserves valuation.  Since all lattices in the sequence~\eqref{eqn:bgr-chain} have the same valuation, the containments are equalities.  Thus, the map $\bGr_\gen \to \C^\times \times \Gr$ given by $(y, L_1, \ldots, L_n) \mapsto (y, L_1)$ is an isomorphism, with inverse given by
\[
(y,L) \mapsto (y, L, \theta_1(y)^{-1}(L), \theta_2(y)^{-1}(L), \ldots, \theta_{n-1}(y)^{-1}(L)).
\]

When $y = 0$, the action of $\theta_i(y)$ on a lattice increases its valuation by $1$, so the sequence in~\eqref{eqn:bgr-chain} is a lattice chain in the sense of~\ref{ss:lattices}.  Thus, the isomorphism $\bGr_\spc \to \Fl$ is given by
\[
(0,L_1, \ldots, L_n) \mapsto (L_1 \supset \theta_1(0)L_2 \supset \cdots \supset \theta_{n-1}(0)(L_n) \supset tL_1).\qedhere
\]
\end{proof}

\begin{rmk}\label{rmk:fancy}
For a complex reductive group $G$, a ``fancy'' definition of the associated global affine Grassmannian $\bGr^\fancy_G$ can be found in~\cite[\S6.2.3]{pz:lmsv} or \cite[\S3.1]{zhu:ccpr}.  In this definition, $\bGr^\fancy_G$ is the ind-scheme that classifies principal bundles for a certain ``parahoric-style'' group scheme related to $G$.  An expository account can be found in~\cite[\S2.2.3.2]{ar:csafv}, and the case where $G = \GL_n$ is worked out in~\cite[Example~2.2.16]{ar:csafv}.  That example shows that $\bGr^\fancy_{\GL_n}$ agrees with~\eqref{eqn:bgr-defn} at the level of $\C$-points.

However, they don't quite agree algebro-geometrically: it turns out that the ind-scheme $\bGr^\fancy_{\GL_n}$ is not reduced, and that the variety defined in~\eqref{eqn:bgr-defn} is its reduced subscheme.  Of course, this distinction is irrelevant for the study of global Schubert varieties, which are reduced by definition.
\end{rmk}

For $\lambda \in \bX^+$, consider the orbit $\Gr_\lambda \subset \Gr$.  Via Lemma~\ref{lem:bgr-gen-spc}, we may regard $\C^\times \times \Gr_\lambda$ as a locally closed subset of $\bGr_\gen$, and we can take its closure $\overline{\C^\times \times \Gr_\lambda} \subset \bGr$.  We call this the \emph{global Schubert variety} labelled by $\lambda$, and we denote it by
\[
\overline{\bGr_\lambda} = \overline{\C^\times \times \Gr_\lambda}.
\]
(Note that the notation ``$\bGr_\lambda$'' is not defined.)  The main result of this paper (Theorem~\ref{thm:main}) states that $\overline{\bGr_\lambda}$ coincides with the variety
\[
X_\lambda = \{ (y, L_1, \ldots, L_n) \in \bGr \mid L_1, \ldots, L_n \in \overline{\Gr_\lambda} \}.
\]
It is clear that $X_\lambda$ is a closed subset of $\bGr$, and that
\[
X_\lambda \cap \bGr_\gen = \C^\times \times \overline{\bGr_\lambda}.
\]
It follows immediately that $X_\lambda \supset \overline{\bGr_\lambda}$. Thus, Theorem~\ref{thm:main} comes down to proving the opposite containment.  Moreover, since we have already shown that this is an equality over $\C^\times$, what remains is to show that
\[
X_\lambda \cap \bGr_\spc \subset \overline{\bGr_\lambda} \cap \bGr_\spc.
\]
Via Lemma~\ref{lem:bgr-gen-spc}, we identify $X_\lambda \cap \bGr_\spc$ with a subset of $\Fl$. 

\begin{lem}\label{lem:x-adm}
Let $\lambda$ be a dominant coweight.
\begin{enumerate}
\item The varieties $\overline{\bGr_\lambda} \cap \bGr_\spc, X_\lambda \cap \bGr_\spc \subset \Fl$ are both stable under $I$.\label{it:xa-orbit}
\item For $x \in \cA$, we have $I \cdot \ubL^x \subset X_\lambda \cap \bGr_\spc$ if and only if $x$ is $\lambda$-permissible.\label{it:xa-perm}
\end{enumerate}
\end{lem}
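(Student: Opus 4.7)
The plan is to handle both parts by converting the $I$-action on $\Fl$ into lattice-family language via Lemma~\ref{lem:bgr-gen-spc} and carrying out direct matrix computations. The core calculation is this: if $F=(0,L_1,\dots,L_n)\in\bGr_\spc$ corresponds (under Lemma~\ref{lem:bgr-gen-spc}) to the lattice chain $\underline{M}=(M_1,\dots,M_n)$ with $M_i=\theta_{i-1}(0)L_i=\bt^{\varpi_{i-1}}L_i$, then the $I$-translate $(gM_1,\dots,gM_n)$ corresponds to the lattice family with $i$-th component $L_i'=(\bt^{-\varpi_{i-1}}g\bt^{\varpi_{i-1}})L_i$. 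The key observation is that the conjugate $\bt^{-\varpi_{i-1}}g\bt^{\varpi_{i-1}}$ lies in $\GL_n(\bO)$ for every $g\in I$: the only entries whose exponent of $t$ is shifted negatively sit in the ``upper-right block'' (rows $\le i-1$, columns $\ge i$), but these are strictly upper-triangular entries of $g$ and hence lie in $t\bO$ by the definition of $I$. Since $\GL_n(\bO)$ preserves $\overline{\Gr_\lambda}$, part~(1) for $X_\lambda\cap\bGr_\spc$ follows immediately.

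For $\overline{\bGr_\lambda}\cap\bGr_\spc$, I would argue by a one-parameter-family degeneration. Any $p=(0,L_1,\dots,L_n)\in\overline{\bGr_\lambda}\cap\bGr_\spc$ arises as the limit at $y=0$ of a family $\gamma(y)=(y,L(y))\in\C^\times\times\Gr_\lambda$ (using $\bGr_\gen\cong\C^\times\times\Gr$), so that $L_i=\lim_{y\to 0}\theta_{i-1}(y)^{-1}L(y)$. Acting on the second factor by $g\in I\subset\GL_n(\bO)$ gives a new family $\tilde\gamma(y)=(y,gL(y))\in\C^\times\times\Gr_\lambda$, since $I$ preserves $\Gr_\lambda$. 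The limit $\lim_{y\to 0}\tilde\gamma(y)$ lies in $\overline{\bGr_\lambda}\cap\bGr_\spc$, and its $i$-th lattice component is
\[
\lim_{y\to 0}\theta_{i-1}(y)^{-1}gL(y)=\lim_{y\to 0}\bigl(\theta_{i-1}(y)^{-1}g\theta_{i-1}(y)\bigr)\bigl(\theta_{i-1}(y)^{-1}L(y)\bigr)=(\bt^{-\varpi_{i-1}}g\bt^{\varpi_{i-1}})L_i,
\]
where the final equality uses that the entries of $\theta_{i-1}(y)^{-1}g\theta_{i-1}(y)$ specialize at $y=0$ to those of $\bt^{-\varpi_{i-1}}g\bt^{\varpi_{i-1}}$ (the divisibility $g_{jk}\in t\bO$ for $j<k$ cancels the factor of $(t-y)^{-1}$ and makes the specialization well-defined). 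Comparing with the preceding paragraph, this limit is exactly $g\cdot p$, so $\overline{\bGr_\lambda}\cap\bGr_\spc$ is indeed $I$-stable.

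For~(2), by~(1) the condition $I\cdot\ubL^x\subset X_\lambda\cap\bGr_\spc$ reduces to $\ubL^x\in X_\lambda\cap\bGr_\spc$. Since $M_i=\bL^{x^{(i)}}$ for the lattice chain $\ubL^x$, the corresponding lattice family has
\[
L_i=\theta_{i-1}(0)^{-1}\bL^{x^{(i)}}=\bt^{-\varpi_{i-1}}\bt^{x^{(i)}}\bO^n=\bL^{x^{(i)}-\varpi_{i-1}}.
\]
By~\eqref{eqn:gr-i-closure}, each condition $L_i\in\overline{\Gr_\lambda}$ is equivalent to $\dom(x^{(i)}-\varpi_{i-1})\preceq\lambda$, and requiring this for every $i$ is precisely the definition of $\lambda$-permissibility.

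The main conceptual step is the degeneration argument in the second paragraph: one must verify that the $I$-action on the generic-fiber side limits correctly to the $\Fl$-side action under Lemma~\ref{lem:bgr-gen-spc}. Once this compatibility is established (which reduces to the entrywise specialization of $\theta_{i-1}(y)^{-1}g\theta_{i-1}(y)$ at $y=0$, powered by the defining divisibility condition of $I$), the rest is bookkeeping with the explicit matrix computation and a direct appeal to~\eqref{eqn:gr-i-closure}.
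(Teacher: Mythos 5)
Your treatment of part~(2) and of the $X_\lambda$-half of part~(1) matches the paper: the computation that $\theta_{i-1}(y)^{-1}g\theta_{i-1}(y)\in\GL_n(\bO)$ for $g\in I$ is indeed the crux, and the reduction of the permissibility condition via $\theta_{i-1}(0)^{-1}\bL^{x^{(i)}}=\bL^{x^{(i)}-\varpi_{i-1}}$ is exactly what the paper does.

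The degeneration argument for $I$-stability of $\overline{\bGr_\lambda}\cap\bGr_\spc$ contains a real gap. The claimed identity
\[
\lim_{y\to 0}\theta_{i-1}(y)^{-1}gL(y)=(\bt^{-\varpi_{i-1}}g\bt^{\varpi_{i-1}})L_i
\]
is false in general. The ``entrywise specialization'' you invoke does not hold: after writing $g_{jk}=t\tilde g_{jk}$, the relevant entry becomes $t\tilde g_{jk}/(t-y)$, and the coefficients of its power-series expansion in $t$ have denominators $y^m$, so the family $y\mapsto\theta_{i-1}(y)^{-1}g\theta_{i-1}(y)$ does not converge (in $\GL_n(\bO/t^N)$, for $N\ge 2$) as $y\to 0$. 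Consequently one cannot interchange the limit with the evaluation of the conjugated matrix. A concrete counterexample with $n=2$, $i=2$: take $g=\left[\begin{smallmatrix}1&t\\0&1\end{smallmatrix}\right]\in I$ and $L(y)=\bO\cdot te_1+\bO\cdot(ye_1+e_2)\in\Gr_{\varpi_1}$. Here $gL(y)=L(y)$ for every $y$, and one computes $\theta_1(y)^{-1}L(y)=\bO\cdot te_1+\bO\cdot(-e_1+e_2)$ for all $y\neq 0$; hence the left side of your displayed identity equals $\bO\cdot te_1+\bO\cdot(-e_1+e_2)$. But the right side is $\left[\begin{smallmatrix}1&1\\0&1\end{smallmatrix}\right]\bigl(\bO\cdot te_1+\bO\cdot(-e_1+e_2)\bigr)=\bL^{\varpi_1}$, a different lattice. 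So $\lim_{y\to 0}\tilde\gamma(y)$ is not $g\cdot p$, and your chain of reasoning does not yield $g\cdot p\in\overline{\bGr_\lambda}$.

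The paper avoids stating this false limit formula. It instead packages the matrix computation into a ``global $I$-action'' on $\bGr$ (set-theoretically an action, restricting to the usual actions on $\bGr_\gen$ and $\bGr_\spc$) and asserts that stability of $\C^\times\times\Gr_\lambda$ under this action implies stability of the closure $\overline{\bGr_\lambda}$. That step still requires the global action to be an algebraic automorphism of $\bGr$, which is where the genuine subtlety lies (this is where the group-scheme framework of Pappas--Zhu and Zhu is doing the real work in the background). Your version makes the underlying continuity issue explicit in the form of the incorrect limit identity, which is precisely what cannot be proved by the divisibility observation alone.
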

\begin{proof}
\eqref{it:xa-orbit}~Define an action of $I$ on $\C \times \Gr \times \cdots \times \Gr$ as follows: for $g \in I$,
\[
g \cdot (y, L_1, \ldots, L_n) = (y, g L_1, \theta_1(y)^{-1}g\theta_1(y)L_2, \ldots, \theta_{n-1}(y)^{-1}g\theta_{n-1}(y)L_n).
\]
This ``global'' action preserves conditions~\eqref{eqn:bgr-val} and~\eqref{eqn:bgr-chain}, so it restricts to an action on $\bGr$.  When $y = 0$, the global action coincides with the natural action on $I$ on $\Fl$ via the isomorphism of Lemma~\ref{lem:bgr-gen-spc}.   

A short calculation shows that for any $g \in I$ and any $y \in \C$, the matrix $\theta_{i-1}(y)^{-1}g \theta_{i-1}(y)$ lies in $\GL_n(\bO)$.  It is immediate that $X_\lambda$ and $\C^\times \times \Gr_\lambda$ are stable under the global $I$-action; the latter case implies that $\overline{\bGr_\lambda}$ is as well.    

\eqref{it:xa-perm}~Let $x = (x^{(1)}, \ldots, x^{(n)})$ be an alcove.  The lattice chain $\ubL^x = (\bL^{x^{(1)}} \supset \bL^{x^{(2)}} \supset \cdots \supset \bL^{x^{(n)}})$ lies in $X_\lambda \cap \bGr_\spc$ if and only if
\[
\theta_{i-1}(0)^{-1}\bL^{x^{(i)}} \in \overline{\bGr_\lambda}
\]
for $1 \le i \le n$.  (When $i =1$, $\theta_0(0)$ is the identity map.)  From the definition, we see that $\theta_{i-1}(0)^{-1} \bL^{x^{(i)}} = \bL^{x^{(i)} - \varpi_{i-1}}$.  By~\eqref{eqn:gr-i-closure}, we conclude that $I \cdot \ubL^x \subset X_\lambda \cap \bGr_\spc$ if and only if $\dom(x^{(i)} - \varpi_{i-1}) \preceq \lambda$,
i.e., if and only if $x$ is $\lambda$-permissible.
\end{proof}

In view of Lemma~\ref{lem:x-adm}, Theorem~\ref{thm:main} comes down to the following claim:
\begin{equation}\label{eqn:main-claim}
\text{For each $\lambda$-permissible alcove $x$, we have $\ubL^x \in \overline{\bGr_\lambda} \cap \bGr_\spc$.}
\end{equation}
The following lemma describes one easy case where this condition holds.

\begin{lem}\label{lem:adm-constant}
If $\lambda = (a,a,\ldots,a)$ is a constant coweight, then $X_\lambda = \overline{\bGr_\lambda}$.
\end{lem}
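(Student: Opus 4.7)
The plan is to exploit the fact that a constant dominant coweight is central, which collapses both $X_\lambda$ and $\overline{\bGr_\lambda}$ to essentially one-dimensional objects that can be identified by direct computation.

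First, I would observe that when $\lambda = (a,a,\ldots,a)$, the matrix $\bt^\lambda = t^a I_n$ is central in $\GL_n(\bK)$, so $\Gr_\lambda = \GL_n(\bO) \cdot \bL^\lambda = \{t^a\bO^n\}$ is a single point. Moreover, the closure is no larger: any dominant $\mu$ with $\mu \preceq \lambda$ satisfies $\mu_1 \le a$ together with $\mu_1 \ge \mu_2 \ge \cdots \ge \mu_n$ and $|\mu| = na$, which forces $\mu = \lambda$. Hence $\overline{\Gr_\lambda} = \{t^a\bO^n\}$.

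Next, I would unwind the definition of $X_\lambda$. Every point must be of the form $(y, t^a\bO^n, \ldots, t^a\bO^n)$, and such a tuple lies in $\bGr$ for every $y \in \C$: the valuations are all equal to $na$, and the chain condition~\eqref{eqn:bgr-chain} reduces to $t^a\bO^n \supset t^a\theta_i(y)(\bO^n) \supset (t-y)t^a\bO^n$ for each $i$, which holds because $(t-y)\bO \subset \bO$. Thus $X_\lambda \cong \C$ via the projection $\rho$.

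Finally, I would compute $\overline{\bGr_\lambda}$ explicitly. By Lemma~\ref{lem:bgr-gen-spc}, the embedding $\C^\times \times \Gr_\lambda \hookrightarrow \bGr_\gen$ sends $(y, t^a\bO^n)$ to $(y, t^a\bO^n, \theta_1(y)^{-1}(t^a\bO^n), \ldots, \theta_{n-1}(y)^{-1}(t^a\bO^n))$, and since $(t-y)$ is a unit in $\bO$ for $y \ne 0$, each $\theta_i(y)^{-1}$ preserves $t^a\bO^n$. So $\bGr_\gen \cap \overline{\bGr_\lambda}$ is identified with $\{(y, t^a\bO^n, \ldots, t^a\bO^n) : y \in \C^\times\}$, whose closure inside $X_\lambda \cong \C$ is all of $X_\lambda$. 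Since we already know $\overline{\bGr_\lambda} \subset X_\lambda$, this gives the desired equality. There is no genuine obstacle here; the whole lemma is a sanity check that bottoms out in verifying that the unique candidate point in the special fiber really lies in $\bGr$.
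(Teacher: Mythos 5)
Your argument is essentially the same as the paper's, just with the details filled in: the paper also observes that $\overline{\Gr_\lambda}$ is a single point (hence $X_\lambda$ and $\overline{\bGr_\lambda}$ are both a copy of $\C$) and leaves the rest implicit. Your unwinding of the chain condition and the closure computation are exactly the verifications the paper's one-line proof is compressing.
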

\begin{proof}
In this case, $\lambda$ is minimal with respect to $\preceq$, and $\Gr_\lambda = \overline{\Gr_\lambda}$ is a single point.  It follows that $X_\lambda$ and $\overline{\bGr_\lambda}$ are both isomorphic to $\C$.
\end{proof}

\begin{rmk}\label{rmk:global-exp}
Using Remark~\ref{rmk:spherical}, we can write out the definition of $X_\lambda$ as
\[
\left\{ (y, L_1, \ldots, L_n) \,\left|
\begin{array}{c}
\text{$y \in \C$, and $L_1, \ldots, L_n$ are lattices in $\bK^n$ such that} \\
\text{$L_1 \supset \theta_1(y)(L_2) \supset \cdots \supset \theta_{n-1}(y)(L_n) \supset (t-y)L_n$, and such} \\
\text{that for $1 \le i \le n$ and $\lambda_1 \ge j \ge \lambda_n$, we have}  \\
\text{ $t^{\lambda_1}\bO^n\subset L_i \subset t^{\lambda_n}\bO^n$, $\nu(L_i) = |\lambda|$, and} \\
\text{$\dim (L_i \cap t^j\bO^n)/t^{\lambda_1+1}\bO^n \ge \sum_{k = j}^{\lambda_1} |\{ h \mid \lambda_h \le k \}|$}
\end{array}
\right.\right\}.
\]
Theorem~\ref{thm:main} says that this is also a description of $\overline{\bGr_\lambda}$.
\end{rmk}

\begin{rmk}\label{rmk:permissible}
The notion of \emph{$\lambda$-permissibility} was introduced in~\cite{kr:ma}, along with a related notion called \emph{$\lambda$-admissibility}, which is defined in terms of the Bruhat order on $\Wext$.  (Neither $\lambda$-admissibility nor the Bruhat order play any role in the present paper.)  In~\cite[Theorem~1]{hn:aasf}, Haines--Ng\^o showed that $\lambda$-permissibility and $\lambda$-admissibility coincide for $G = \GL_n$ (although they do not coincide in general).  In~\cite[Theorem~3.8]{zhu:ccpr}, Zhu showed (for any $G$) that the special fiber $\overline{\bGr_\lambda} \cap \bGr_\spc$ is the union of $I$-orbits labelled by $\lambda$-admissible alcoves.  In view of Lemma~\ref{lem:x-adm}, we see that among the three statements
\[
\text{[this paper, Theorem~\ref{thm:main}]}
\qquad
\text{\cite[Theorem~1]{hn:aasf}}
\qquad
\text{\cite[Theorem~3.8 for $G = \GL_n$]{zhu:ccpr}}
\]
any two imply the third.
\end{rmk}

\section{The case of fundamental coweights}
\label{sec:minuscule}

Fix an integer $r$ with $1 \le r \le n-1$.  The goal of this section is prove~\eqref{eqn:main-claim}, and hence Theorem~\ref{thm:main}, for the case of a fundamental coweight $\lambda = \varpi_{n-r}$.  Recall from Example~\ref{ex:locfund} that $\Gr_{\varpi_{n-r}} \cong G(n,r)$.  Let us describe $X_{\varpi_{n-r}}$ in these terms.  For $1 \le i \le n$ and $y \in \C$, let
\[
\bar\theta_i(y): \C^n \to \C^n
\]
be the map induced by $\theta_i(y): \bO^n \to \bO^n$ under the identification $\bO^n/t\bO^n \cong \C^n$.  Explicitly, $\bar\theta_i(y)$ is given on basis vectors by
\begin{equation}\label{eqn:bartheta-defn}
\bar\theta_i(y)e_j =
\begin{cases}
-ye_j & \text{if $j \le i$,} \\
e_j & \text{if $j > i$.}
\end{cases}
\end{equation}
We have
\begin{multline}\label{eqn:xfund-defn}
X_{\varpi_{n-r}} =
\{ (y, H_1, \ldots, H_n) \in \C \times G(n,r) \times \cdots \times G(n,r) \mid{} \\
H_1 \supset \bar\theta_1(y)(H_2) \supset \cdots \supset \bar\theta_{n-1}(y)(H_n) \supset y H_1 \}.
\end{multline}
We will prove the result by giving an explicit formula for a $1$-parameter family in $X_{\varpi_{n-r}} \cap \bGr_\gen = \C^\times \times G(n,r)$ whose limit at $y = 0$ is a given point of $X_{\varpi_{n-r}} \cap \bGr_\spc$.

\subsection{Notation related to alcoves}
\label{ss:fundalcove}

Let $x = (x^{(1)}, \ldots, x^{(n)})$ be a $\varpi_{n-r}$-permissible alcove.  In particular, in the coweight $x^{(1)}$, there are $n-r$ coordinates equal to $1$, and $r$ coordinates equal to $0$. Let $J^{(1)} = (j^{(1)}_1, j^{(1)}_2, \ldots, j^{(1)}_r)$ be the set of indices (in some order) of the $0$'s in $x^{(1)}$, i.e.,
\[
x^{(1)} = \sum_{\substack{1 \le j \le n\\ j \notin J^{(1)}}} e_j.
\]
For $1 \le i \le n$, define a sequence $J^{(i+1)} = (j^{(i+1)}_1, \ldots, j^{(i+1)}_r)$ by the inductive rule
\begin{equation}\label{eqn:jq-defn}
j^{(i+1)}_q = 
\begin{cases}
i & \text{if $j^{(i)}_q = \sP(x)(i)$,} \\
j^{(i)}_q & \text{otherwise.}
\end{cases}
\end{equation}
Using the fact that $(x^{(i+1)} - \varpi_i) = (x^{(i)} - \varpi_{i-1}) + e_{\sP(x)(i)} - e_i$, we see that
\begin{equation}\label{eqn:alcove-J}
x^{(i)} - \varpi_{i-1} = \sum_{\substack{1 \le j \le n\\ j \notin J^{(i)}}} e_j.
\end{equation}
In particular, since $x^{(n+1)} - \varpi_n = x^{(1)}$, we see that $J^{(1)}$ and $J^{(n+1)}$ are equal as (unordered) sets.  As ordered sets, they are permutations of one another.  Let $\sigma \in \fS_r$ be the permutation such that
\[
j^{(1)}_q = j^{(n+1)}_{\sigma(q)} \qquad\text{for $1 \le q \le r$.}
\]

It will be useful to have an alternative notation for the sequence of integers
\[
j^{(1)}_q, j^{(2)}_q, \ldots, j^{(n+1)}_q.
\]
Let $f_q$ be $1 +{}$(the number of ``changes'' in this sequence), i.e.,
\[
f_q = 1 + |\{i \mid \text{$2 \le i \le n+1$ and $j^{(i)}_q \ne j^{(i-1)}_q$} \}|.
\]
We record the indices where changes occur as follows: let
\[
1 = c(q,1) < c(q,2) < \cdots < c(q,f_q) < c(q,f_q+1) = n+2
\]
be the sequence of integers characterized by
\[
j_q^{(c(q,f))} = j_q^{(c(q,f)+1)} = \cdots = j_q^{(c(q,f+1)-1)} \ne j_q^{(c(q,f+1))}
\qquad\text{for $1 \le f \le f_q$.}
\]
For $1 \le i \le n+1$, let $\phi(q,i) = \max \{f \mid \text{$1 \le f \le f_q$ and $c(q,f) \le i$}\}$.  Thus,
\[
c(q,\phi(q,i)) \le i < c(q,\phi(q,i)+1).
\] 
Finally, for $1 \le f \le f_q$, let
\[
a(q,f) = j_q^{(c(q,f))} = j_q^{(c(q,f)+1)} = \cdots = j_q^{(c(q,f+1)-1)}.
\]
In particular, $a(q,f_q) = j_q^{(n+1)}$, so
\begin{equation}\label{eqn:aq-sigma}
a(q,1) = a(\sigma(q), f_{\sigma(q)}).
\end{equation}

Suppose $1 \le q \le r$, and consider the finite set
\[
\cyc(q) = \{q, \sigma(q), \sigma^2(q), \ldots \} \subset \{1,\ldots, r\}.
\]
Let $M(q) \in \cyc(q)$ be the element whose corresponding term in $J^{(1)}$ is maximal:
\[
j^{(1)}_{M(q)} \ge j^{(1)}_s \qquad\text{for all $s \in \cyc(q)$.}
\]
Equivalently, $a(M(q),1) \ge a(s,1)$ for all $s \in \cyc(q)$.  We also set
\begin{gather*}
m(q) = \sigma(M(q)), \quad C(q) = |\cyc(q)|,\quad
\delta(q) = \min \{ h \ge 0 \mid \sigma^h(q) = M(q) \}.
\end{gather*}
The following observations are immediate from the definitions:
\begin{gather*}
0 = \delta(M(q)) \le \delta(q) \le \delta (m(q)) = C(q) - 1 < r\quad\text{for all $q$,} \\
\text{If $q \ne M(q)$, then }
\delta(\sigma(q)) = \delta(q) - 1, \\
\text{If $f_q = 1$, then $\phi(q,i)=1$ for all $i$,} \\
\text{If $f_q>1$, then $f_s>1$ for all $s\in\cyc(q)$.}
\end{gather*}

\begin{lem}\label{lem:aq-basic}
Let $1 \le q \le r$ and and $1 \le i \le n+1$.
\begin{enumerate}
\item If $2 \le f \le f_q$, then $a(q,f) = c(q,f) - 1$.\label{it:aqcq}
\item Suppose $2 \le f \le f_q$.  We have $a(q,f) < i$ if and only if $f \le \phi(q,i)$.\label{it:aqf}
\item We have $a(q,1) > a(q,2)$ and $a(q,2) < a(q,3) < \cdots < a(q,f_q)$.\label{it:aqorder}
\item For $1 \le f \le f_q$, we have $a(q,f) \le a(M(q),1)$.\label{it:aqM}
\item If $i > a(M(q),1)$, then $\phi(q,i) = f_q$.\label{it:Mphi}
\item If $f_q=n+1$, then $\sigma(q)=q$.\label{it:fqnr}
\end{enumerate}
\end{lem}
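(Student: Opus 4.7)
The plan is to verify the six items by unwinding the definitions of $a(q,f)$, $c(q,f)$, $\phi(q,i)$, and $\sigma$, appealing to the alcove axioms and $\varpi_{n-r}$-permissibility only where necessary.

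Items (1) and (2) follow directly from the definitions. For (1), the defining property of $c(q,f)$ says that a change in $j_q^{(\cdot)}$ occurs between indices $c(q,f)-1$ and $c(q,f)$, and rule \eqref{eqn:jq-defn} forces $j^{(c(q,f))}_q = c(q,f) - 1$; since the value stays constant until $c(q,f+1)$, we get $a(q,f) = c(q,f) - 1$. Item (2) then follows since $a(q,f) < i$ iff $c(q,f) \le i$, which by definition of $\phi(q,i) = \max\{f : c(q,f) \le i\}$ is equivalent to $f \le \phi(q,i)$.

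The main content is in (3). The chain $a(q,2) < a(q,3) < \cdots < a(q,f_q)$ is immediate from (1) together with $c(q,2) < c(q,3) < \cdots$. The strict inequality $a(q,1) > a(q,2)$ is where I expect the main obstacle; here both the alcove axioms and the $\varpi_{n-r}$-permissibility are needed. Set $k = c(q,2) - 1$: at the change step $k \to k+1$ one has $j^{(k)}_q = a(q,1)$ and $\sP(x)(k) = a(q,1)$, so $x^{(k+1)}_{a(q,1)} = x^{(k)}_{a(q,1)} + 1$. The alcove axiom $x^{(k+1)} \le_\co x^{(1)} + \varpi_n$ at coordinate $a(q,1)$, combined with $x^{(1)}_{a(q,1)} = 0$ (from $a(q,1) \in J^{(1)}$ and permissibility), forces $x^{(k)}_{a(q,1)} = 0$. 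Rewriting via \eqref{eqn:alcove-J} then yields $\varpi_{k-1,a(q,1)} = 0$, so $a(q,1) \ge k$. The case $a(q,1) = k$ would make $k$ a fixed point of $\sP(x)$, so rule \eqref{eqn:jq-defn} would give $j^{(k+1)}_q = j^{(k)}_q$, contradicting the definition of $c(q,2)$; hence $a(q,1) \ge k+1 > a(q,2)$.

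For (4), identity \eqref{eqn:aq-sigma} rewritten as $a(q,f_q) = a(\sigma^{-1}(q), 1)$ combined with $\sigma^{-1}(q) \in \cyc(q)$ gives $a(q,f_q) \le a(M(q),1)$; the intermediate values fall in line via (3), and $a(q,1) \le a(M(q),1)$ since $q \in \cyc(q)$. Item (5) is then immediate from (4) together with (1): when $f_q \ge 2$, $i > a(M(q),1) \ge a(q,f_q)$ gives $c(q,f_q) \le i$, so $\phi(q,i) = f_q$; the case $f_q = 1$ is trivial. Finally, for (6), if $f_q = n+1$ then the $n$ change indices $c(q,2) < \cdots < c(q,n+1)$ fill all of $\{2,\ldots,n+1\}$, so $c(q,f) = f$ and $a(q,f) = f-1$ for $f \ge 2$. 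The rule \eqref{eqn:jq-defn} then determines $\sP(x)(1) = a(q,1)$ and $\sP(x)(i) = i-1$ for $2 \le i \le n$; since $\sP(x)$ must be a permutation of $\{1,\ldots,n\}$, we conclude $a(q,1) = n$, whence $j^{(n+1)}_q = n = j^{(1)}_q$ and the identity $j^{(1)}_q = j^{(n+1)}_{\sigma(q)}$ (using distinctness of the $j^{(n+1)}_s$) yields $\sigma(q) = q$.
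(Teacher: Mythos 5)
Your proof is correct, and items (1), (2), (4), (5), (6) are handled essentially as the paper does (same definitional unwinding for (1)--(2), same chain of inequalities via \eqref{eqn:aq-sigma} and the cycle structure for (4), same deduction from (4) for (5), and the same counting-of-changes argument for (6)). The genuine difference is in (3), which you yourself flag as the crux. The paper's proof of $a(q,1) > a(q,2)$ is purely combinatorial: set $j = a(q,1)$, observe $j \notin J^{(c(q,2))}$ but $j \in J^{(n+1)} = J^{(1)}$, take the smallest $c > c(q,2)$ with $j \in J^{(c)}$, and note that rule \eqref{eqn:jq-defn} forces a re-entry to satisfy $c = j+1$; then $c(q,2) < j+1$ gives $a(q,2) = c(q,2)-1 < a(q,1)$. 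Your argument instead reads off the alcove inequality $x^{(1)} \le_\co x^{(k)} \le_\co x^{(k+1)} \le_\co x^{(1)} + \varpi_n$ in coordinate $a(q,1)$ to pin $x^{(k)}_{a(q,1)} = 0$, then uses \eqref{eqn:alcove-J} to deduce $(\varpi_{k-1})_{a(q,1)} = 0$, hence $a(q,1) \ge k$, and rules out equality via the $\sP(x)$-fixed-point contradiction. Both are valid: the paper's route tracks the discrete dynamics of the index $j$ in the sequence $J^{(1)}, J^{(2)}, \ldots$, while yours extracts the bound directly from the coordinatewise alcove inequalities together with the $0/1$-vector structure supplied by $\varpi_{n-r}$-permissibility. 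Your approach is arguably more explicitly tied to the alcove geometry; the paper's is slightly more self-contained in that it never needs to look at the $\le_\co$-inequalities once $J^{(i)}$ has been set up.
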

\begin{proof}
\eqref{it:aqcq}~Since $f \ge 2$, we have $j^{(c(q,f))}_q \ne j^{(c(q,f)-1)}_q$. From the definition~\eqref{eqn:jq-defn}, we have $j^{(c(q,f))}_q = c(q,f) - 1$.

\eqref{it:aqf}~In view of part~\eqref{it:aqcq}, we have $a(q,f) < i$ if and only if $c(q,f) \le i$, and the latter holds if and only if $f \le \phi(q,i)$.

\eqref{it:aqorder}~If $f_q=1$, then this is vacuous, as $a(q,f)$ is undefined for $f>f_q=1$. Otherwise, let $j = a(q,1)$.  Then $j \in J^{(1)}$ and $j \in J^{(n+1)}$ but $j \notin J^{(c(q,2))}$.  Let $c$ be the smallest integer with $c(q,2) < c \le n+1$ such that $j \in J^{(c)}$.  From~\eqref{eqn:jq-defn}, we must have $c = j+1$.  We deduce that $c(q,2) < a(q,1) + 1$, and then by part~\eqref{it:aqcq}, $a(q,2) < a(q,1)$.  The remaining inequalities in part~\eqref{it:aqorder} follow  from part~\eqref{it:aqcq}.

\eqref{it:aqM}~If $f = 1$, this holds by definition.  If $f \ge 2$, then $a(q,f) \le a(q,f_q) = a(\sigma^{-1}(q),1) \le a(M(\sigma^{-1}(q)),1) = a(M(q),1)$.

\eqref{it:Mphi}~If $f_q=1$, then $\phi(q,1)=1=f_q$ automatically. Otherwise, this follows from parts~\eqref{it:aqf} and~\eqref{it:aqM}.

\eqref{it:fqnr}~For $1\leq i \leq n$, we must have $j_q^{(i)} = \sP(x)(i) \neq i = j_q^{(i+1)}$. In particular, $i=\sP(x)(i+1)$ for $1\leq i \leq n-1$. It follows that $j_q^{(1)} = \sP(x)(1) = n = j^{(n+1)}_q$. Thus $\sigma(q)=q$. 
\end{proof}

\subsection{Families of vectors}
\label{ss:families}

In this subsection, for $1 \le q \le r$, we will define vectors $v_q(u), v_q'(u), v''_q(u)$ whose entries are polynomials in an indeterminate $u$.  To reduce clutter, throughout this subsection, we write
\begin{equation}\label{eqn:q-fix}
m = m(q),\qquad M = M(q), \qquad \delta = \delta(q), \qquad C = C(q).
\end{equation}

For $0 \le h \le \delta$, we introduce the notation
\[
\beta(h) = n^{r-h}
\qquad\text{and}\qquad
B(h) = \beta(0) + \beta(1) + \cdots + \beta(h-1),
\]
where we take $B(0)=0$.  Let
\[
v_q(u) = e_{a(q,1)} + \sum_{f=2}^{f_q} u^{\beta(\delta)(f_q +1 -f)} e_{a(q,f)}.
\]
If $f_q > 1$, we can rewrite this using~\eqref{eqn:aq-sigma} as
\begin{equation}
v_q(u) =
e_{a(q,1)} + u^{\beta(\delta)}e_{a(\sigma^{-1}(q),1)} + \sum_{f = 2}^{f_q-1} u^{\beta(\delta)(f_q +1 -f)} e_{a(q,f)}.\label{eqn:vu-def2}
\end{equation}
Next, assuming $f_q > 1$, let
\begin{equation}\label{eqn:vpu-defn}
v'_q(u) = \sum_{h=0}^{\delta} (-1)^{h} u^{B(\delta -h)} v_{\sigma^{h}(q)}(u).
\end{equation}
Note that $\delta(\sigma^{h}(q)) = \delta - h$, so~\eqref{eqn:vu-def2} yields
\begin{multline}\label{eqn:vpu-expand}
v_{\sigma^{h}(q)}(u) = 
e_{a(\sigma^{h}(q),1)} + u^{\beta(\delta - h)}e_{a(\sigma^{h - 1}(q), 1)} \\ {}+ \sum_{f=2}^{f_{\sigma^{h}(q)} - 1} u^{\beta(\delta - h)(f_{\sigma^{h}(q)} + 1 - f)} e_{a(\sigma^{h}(q), f)}.
\end{multline}
Expand~\eqref{eqn:vpu-defn} using~\eqref{eqn:vpu-expand} to get
\begin{multline}\label{eqn:vpu-def2}
v'_q(u) = \sum_{h=0}^{\delta} (-1)^{h} u^{B(\delta - h)} \Big(e_{a(\sigma^{h}(q), 1)} + 
u^{\beta(\delta - h)}e_{a(\sigma^{h - 1}(q), 1)}\Big) \\
+ \sum_{h=0}^{\delta} \sum_{f= 2}^{f_{\sigma^{h}(q)} - 1} (-1)^{h}
u^{B(\delta - h)}u^{\beta(\delta - h)(f_{\sigma^{h}(q)} + 1 - f)} e_{a(\sigma^{h}(q), f)}.
\end{multline}
Since $B(\delta - h) + \beta(\delta - h) = B(\delta - h +1)$, the first summation above is equal to
\[
\sum_{h=0}^{\delta} (-1)^{h} \Big( u^{B(\delta - h)} e_{a(\sigma^{h}(q), 1)} + 
u^{B(\delta - (h-1))} e_{a(\sigma^{h - 1}(q), 1)}\Big).
\]
Most terms in this summation cancel, and we can rewrite~\eqref{eqn:vpu-def2} as
\begin{multline}\label{eqn:vpu-cancel}
v'_q(u) = (-1)^{\delta} e_{a(M, 1)} + u^{B(\delta+1)} e_{a(\sigma^{-1}(q),1)} \\
+ \sum_{h=0}^{\delta} \sum_{f= 2}^{f_{\sigma^{h}(q)} - 1} (-1)^{h}
u^{B(\delta - h)}u^{\beta(\delta - h)(f_{\sigma^{h}(q)} + 1 - f)} e_{a(\sigma^{h}(q), f)}.
\end{multline}
In the special case where $q = m$, so that $\sigma^{-1}(q) = M$,~\eqref{eqn:vpu-cancel} simplifies to
\begin{multline}\label{eqn:vpm-cancel}
v'_m(u) = \Big((-1)^{C-1} + u^{B(C)}\Big) e_{a(M,1)} \\
+ \sum_{h=0}^{C-1} \sum_{f= 2}^{f_{\sigma^{h}(m)} - 1}
u^{B(C - 1 - h)}u^{\beta(C - 1 - h)(f_{\sigma^{h}(m)} + 1 - f)} e_{a(\sigma^{h}(m), f)}.
\end{multline}

For another description of $v'_q(u)$, separate the $h=0$ terms in~\eqref{eqn:vpu-cancel} and combine with $u^{B(\delta+1)}e_{a(\sigma^{-1}(q),1)} = u^{B(\delta)} u^{\beta(\delta)}e_{a(q,f_q)}$ to obtain
\begin{multline}\label{eqn:vpu-can2}
v'_q(u) = (-1)^{\delta} e_{a(M, 1)} + u^{B(\delta)} \Big(v_q(u) - e_{a(q,1)}\Big) \\
+ \sum_{h=1}^{\delta} \sum_{f= 2}^{f_{\sigma^{h}(q)} - 1} (-1)^{h}
u^{B(\delta - h)}u^{\beta(\delta - h)(f_{\sigma^{h}(q)} + 1 - f)} e_{a(\sigma^{h}(q), f)}.
\end{multline}

Finally, if $f_q > 1$ and $q \ne m$, let
\[
v''_q(u) = (1 + (-1)^{C-1}u^{B(C)})v'_q(u) + (-1)^{C+\delta}v'_m(u). 
\]
It will be important to rewrite this as
\begin{multline}\label{eqn:vppu-cancel}
v''_q(u) =  (1 + (-1)^{C-1}u^{B(C)})\big(v'_q(u) - (-1)^\delta e_{a(M,1)}\big) \\
+ 
(-1)^{C+\delta}\Big( v'_m(u) - \big((-1)^{C-1} + u^{B(C)}\big) e_{a(M,1)}\Big).
\end{multline}

\begin{lem}\label{lem:vu-terms}
Let $1 \le j \le n$. Suppose $f_q>1$.
\begin{enumerate}
\item If $e_j$ occurs with nonzero coefficient in $v_q(u)$, then $a(q,2) \le j \le a(M,1)$.
\item If $e_j$ occurs with nonzero coefficient in $v'_q(u)$ or $v''_q(u)$, then $j \le a(M,1)$.
\end{enumerate}
\end{lem}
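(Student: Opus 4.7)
The plan is to identify, for each of $v_q(u)$, $v'_q(u)$, $v''_q(u)$, the finite set of standard basis vectors $e_j$ whose coefficient can be nonzero, and then bound the index $j$ using Lemma~\ref{lem:aq-basic}.

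For part (1), the defining formula for $v_q(u)$ shows that the only basis vectors appearing are those in the set $\{e_{a(q,f)} : 1 \le f \le f_q\}$. Hence if $e_j$ has a nonzero coefficient in $v_q(u)$, then $j = a(q,f)$ for some $f$. The hypothesis $f_q > 1$ together with Lemma~\ref{lem:aq-basic}(3) tells us that $a(q,2)$ is the strict minimum of the list $a(q,1), a(q,2), \ldots, a(q,f_q)$, while Lemma~\ref{lem:aq-basic}(4) gives $a(q,f) \le a(M,1)$ for every $f$. Combining these yields $a(q,2) \le j \le a(M,1)$.

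For part (2), I will start from formula~\eqref{eqn:vpu-defn}, which expresses $v'_q(u)$ as a $\C[u]$-linear combination of the vectors $v_{\sigma^h(q)}(u)$ for $0 \le h \le \delta$. Consequently, every basis vector appearing in $v'_q(u)$ is of the form $e_{a(s,f)}$ with $s \in \{q, \sigma(q), \ldots, \sigma^{\delta}(q)\} \subseteq \cyc(q)$ and $1 \le f \le f_s$. Since $M(s) = M(q) = M$ for all $s \in \cyc(q)$, Lemma~\ref{lem:aq-basic}(4) applied to $s$ yields $a(s,f) \le a(M,1)$, and hence $j \le a(M,1)$. The same reasoning handles $v''_q(u)$: it is manifestly a $\C[u]$-linear combination of $v'_q(u)$ and $v'_m(u)$, and $m \in \cyc(q)$, so the index bound persists.

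I do not expect a serious obstacle; the argument is essentially bookkeeping. The only point requiring a moment's care is the lower bound in part (1), which relies not merely on distinctness but on the specific asymmetric ordering $a(q,1) > a(q,2) < a(q,3) < \cdots < a(q,f_q)$ recorded in Lemma~\ref{lem:aq-basic}(3), which forces the minimum of the list to be attained at $f = 2$.
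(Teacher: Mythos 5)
Your proof is correct and is exactly the argument the paper has in mind; the paper simply says ``immediate from the definitions and Lemma~\ref{lem:aq-basic},'' and you have spelled out that bookkeeping accurately, including the observation that the lower bound in part (1) comes from the specific shape $a(q,1)>a(q,2)<a(q,3)<\cdots<a(q,f_q)$ in Lemma~\ref{lem:aq-basic}(3), and that $M(s)=M(q)$ for every $s\in\cyc(q)$ so that Lemma~\ref{lem:aq-basic}(4) gives the uniform upper bound $a(M,1)$ across all the $v_{\sigma^h(q)}(u)$.
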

\begin{proof}
Immediate from the definitions and Lemma~\ref{lem:aq-basic}.
\end{proof}

\subsection{Evaluation at \texorpdfstring{$u = 0$}{u=0}}
\label{ss:eval}

In this subsection, we retain the convention from~\eqref{eqn:q-fix}.  For a positive integer $N$, form the $n \times n$ matrix of Laurent polynomials
\[
\bar\theta_{i-1}(-u^{-N}) = 
\left[\begin{smallmatrix}
u^{-N} \\ & \ddots \\ && u^{-N} \\ &&& 1 \\ &&&& \ddots \\ &&&&& 1
\end{smallmatrix}\right]
\]
defined as in~\eqref{eqn:bartheta-defn}.  We will consider the vector $\bar\theta_{i-1}(-u^{-N})v_q(u)$ and related expressions.  Because these vectors involve Laurent polynomials, it may not be possible to evaluate them at $u = 0$. We will prove a large number of lemmas describing cases in which evaluation at $u = 0$ is possible.

\begin{lem}\label{lem:vu-final}
Suppose $\phi(q,i) = 1$. If $f_q=1$, suppose $i \le a(q,1)$. For $N > 0$, we have
\[
\big(\bar\theta_{i-1}(-u^{-N}) \cdot  v_q(u)\big)\big|_{u = 0} = e_{a(q,1)}.
\]
\end{lem}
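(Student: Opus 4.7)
The plan is to unpack both sides carefully, split into the two cases $f_q=1$ and $f_q>1$, and in each case verify that (a) every basis vector $e_{a(q,f)}$ appearing in $v_q(u)$ has index $\ge i$, so that $\bar\theta_{i-1}(-u^{-N})$ acts on it as the identity (introducing no negative power of $u$), and (b) the only term in $v_q(u)$ whose scalar coefficient is nonzero at $u=0$ is the $f=1$ term, with coefficient $1$.

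First I would handle the case $f_q = 1$. Here $v_q(u) = e_{a(q,1)}$ by definition, and the hypothesis gives $i \le a(q,1)$. From the explicit matrix shape recalled just before the lemma (or directly from~\eqref{eqn:bartheta-defn} with $y = -u^{-N}$), we have $\bar\theta_{i-1}(-u^{-N}) e_j = e_j$ whenever $j \ge i$, so the vector $\bar\theta_{i-1}(-u^{-N}) v_q(u) = e_{a(q,1)}$ already, and evaluating at $u=0$ is trivial.

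Next I would handle $f_q > 1$. The hypothesis $\phi(q,i) = 1$ means, by Lemma~\ref{lem:aq-basic}\eqref{it:aqf}, that $a(q,f) \ge i$ for every $f \ge 2$ (since $a(q,f)<i$ would force $f \le 1$). Combined with Lemma~\ref{lem:aq-basic}\eqref{it:aqorder}, which gives $a(q,1) > a(q,2)$, this yields $a(q,1) > a(q,2) \ge i$, hence also $a(q,1) \ge i$. Therefore every basis vector $e_{a(q,f)}$ occurring in $v_q(u) = e_{a(q,1)} + \sum_{f=2}^{f_q} u^{\beta(\delta)(f_q+1-f)} e_{a(q,f)}$ has index at least $i$, so $\bar\theta_{i-1}(-u^{-N})$ fixes each one and we get
\[
\bar\theta_{i-1}(-u^{-N}) v_q(u) = e_{a(q,1)} + \sum_{f=2}^{f_q} u^{\beta(\delta)(f_q+1-f)} e_{a(q,f)}.
\]
Since $\beta(\delta) \ge 1$ and $f_q + 1 - f \ge 1$ in every summand, each coefficient for $f \ge 2$ is a strictly positive power of $u$ and vanishes at $u=0$, leaving $e_{a(q,1)}$.

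There is no real obstacle here: the statement is a bookkeeping check, and the only content is invoking parts~\eqref{it:aqf} and~\eqref{it:aqorder} of Lemma~\ref{lem:aq-basic} to force the basis-vector indices out of the range $[1,i-1]$ where $\bar\theta_{i-1}(-u^{-N})$ would insert a $u^{-N}$.
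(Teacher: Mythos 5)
Your proof is correct and takes essentially the same route as the paper. The only superficial difference is that you re-derive, via Lemma~\ref{lem:aq-basic}\eqref{it:aqf} and~\eqref{it:aqorder}, the fact that every basis vector in $v_q(u)$ has index $\ge a(q,2) \ge i$, whereas the paper packages that observation as Lemma~\ref{lem:vu-terms} and just cites it after establishing $i \le a(q,2)$.
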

\begin{proof}
If $f_q=1$, then $v_q(u)=e_{a(q,1)}$, so if $i\leq a(q,1)$, we have $\bar\theta_{i-1}(-u^{-N}) v_q(u) = e_{a(q,1)}$, whence the claim follows. Now suppose $f_q>1$. Since $2>\phi(q,i)$, we have $i \le a(q,2)$ by Lemma~\ref{lem:aq-basic}\eqref{it:aqf}, and then Lemma~\ref{lem:vu-terms} implies that $\bar\theta_{i-1}(-u^{-N}) v_q(u) = v_q(u)$.  The fact that $v_q(u)|_{u=0} = e_{a(q,1)}$ is clear from the definition.
\end{proof}

\begin{rmk}\label{rmk:vu-final-fqone}
If $f_q=1$ and $i>a(q,1)$, then $\bar\theta_{i-1}(-u^{-N}) v_q(u) = u^{-N}e_{a(q,1)}$ cannot be evaluated at $u=0$. In this case, the relevant statement we use later is 
\[
\big(\bar\theta_{i-1}(-u^{-N}) \cdot u^N v_q(u)\big)\big|_{u = 0} = e_{a(q,1)}.
\]
\end{rmk}

\begin{lem}\label{lem:vu-calc}
Suppose $2 \le \phi(q,i) \le f_q$.  For $N \gg 0$, we have
\[
\Big(\bar\theta_{i-1}(-u^{-N}) \cdot u^{N - \beta(\delta)(f_q+1-\phi(q,i))} (v_q(u) - e_{a(q,1)})\Big)\Big|_{u=0} 
= e_{a(q,\phi(q,i))}.
\]
\end{lem}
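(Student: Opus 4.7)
The plan is to prove this by direct expansion: substitute the definition of $v_q(u) - e_{a(q,1)}$, apply $\bar\theta_{i-1}(-u^{-N})$ term by term, and then evaluate the power of $u$ that multiplies each surviving basis vector.

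First I would use the definition of $v_q$ to write
\[
v_q(u) - e_{a(q,1)} = \sum_{f=2}^{f_q} u^{\beta(\delta)(f_q+1-f)} e_{a(q,f)}.
\]
From the explicit formula \eqref{eqn:bartheta-defn} we have $\bar\theta_{i-1}(-u^{-N})e_j = u^{-N}e_j$ when $j \le i-1$ and $\bar\theta_{i-1}(-u^{-N})e_j = e_j$ when $j \ge i$. The decisive combinatorial input is Lemma~\ref{lem:aq-basic}\eqref{it:aqf}: for $2 \le f \le f_q$, we have $a(q,f) < i$ if and only if $f \le \phi(q,i)$. Writing $P = \phi(q,i)$, this splits the sum into the ``small index'' part $2 \le f \le P$, on which $\bar\theta_{i-1}(-u^{-N})$ contributes an extra factor $u^{-N}$, and the ``large index'' part $P+1 \le f \le f_q$, on which it acts trivially.

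Next I would multiply by $u^{N - \beta(\delta)(f_q+1-P)}$ and compute the exponent of $u$ on each summand. For $2 \le f \le P$ the exponent becomes
\[
-N + \beta(\delta)(f_q+1-f) + N - \beta(\delta)(f_q+1-P) = \beta(\delta)(P - f),
\]
which is non-negative and vanishes precisely for $f = P$; thus at $u = 0$ only the term $e_{a(q,P)}$ survives from this group. For $P+1 \le f \le f_q$ the exponent is
\[
N + \beta(\delta)(P - f) = N - \beta(\delta)(f - P),
\]
which is strictly positive for every such $f$ provided $N > \beta(\delta)(f_q - P)$. Taking $N$ that large, all large-index terms die at $u = 0$, leaving exactly $e_{a(q,\phi(q,i))}$, as claimed.

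There is no genuine obstacle here: once the two summations are separated using Lemma~\ref{lem:aq-basic}\eqref{it:aqf}, the whole statement is an exponent-tracking exercise, and the choice ``$N \gg 0$'' in the hypothesis is precisely what is needed to make the ``large index'' contributions vanish in the limit. The only thing to be mildly careful about is the edge case $P = f_q$, where the ``large index'' sum is empty, so any positive $N$ works; writing the bound as $N > \beta(\delta)(f_q - P)$ covers this uniformly.
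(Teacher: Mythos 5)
Your proof is correct and follows essentially the same route as the paper: expand $v_q(u) - e_{a(q,1)}$, use Lemma~\ref{lem:aq-basic}\eqref{it:aqf} to decide which basis vectors pick up the $u^{-N}$ factor from $\bar\theta_{i-1}(-u^{-N})$, and then track exponents to see that only $f = \phi(q,i)$ survives at $u=0$ once $N$ is large. The only cosmetic difference is that you spell out the explicit bound $N > \beta(\delta)(f_q - \phi(q,i))$, whereas the paper states the equivalent condition $N > -\beta(\delta)(\phi(q,i)-f)$ uniformly over $f$.
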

\begin{proof}
On the left-hand side, $e_{a(q,1)}$ does not appear.  For $f \ge 2$, the coefficient of $e_{a(q,f)}$ is as follows (using Lemma~\ref{lem:aq-basic}\eqref{it:aqf}):
\[
\begin{cases}
u^{\beta(\delta)(\phi(q,i) - f)} & \text{if $a(q,f) < i$, or equivalently, if $f \le \phi(q,i)$,} 
 \\
u^{N+ \beta(\delta)(\phi(q,i) - f)} & \text{if $a(q,f) \ge i$, or equivalently, if $f > \phi(q,i)$.}
\end{cases}
\]
If $N > - \beta(\delta)(\phi(q,i) - f)$ for all $f$, then these coefficients are always of the form $u^{\text{nonnegative integer}}$, with $u^0$ occurring only when $f = \phi(q,i)$.  The lemma follows.
\end{proof}

\begin{lem}\label{lem:vpu-calc}
Suppose $2 \le \phi(q,i) \le f_q$.  For $N \gg 0$, we have
\[
\Big(\bar\theta_{i-1}(-u^{-N}) \cdot u^{N - B(\delta) - \beta(\delta)(f_q + 1 - \phi(q,i))} (v'_q(u) - (-1)^{\delta} e_{a(M,1)})\Big)\big|_{u = 0}
= e_{a(q,\phi(q,i))}.
\]
\end{lem}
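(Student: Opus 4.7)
The plan is to mirror the proof of Lemma~\ref{lem:vu-calc}. Using the cancellation formula~\eqref{eqn:vpu-can2}, decompose
\[
v'_q(u) - (-1)^\delta e_{a(M,1)} = u^{B(\delta)}\bigl(v_q(u) - e_{a(q,1)}\bigr) + T_q(u),
\]
where $T_q(u)$ denotes the double sum over $h \ge 1$ in~\eqref{eqn:vpu-can2}. After multiplying by $u^{N - B(\delta) - \beta(\delta)(f_q + 1 - \phi(q,i))}$, the first piece becomes exactly $u^{N - \beta(\delta)(f_q+1-\phi(q,i))}(v_q(u) - e_{a(q,1)})$; applying $\bar\theta_{i-1}(-u^{-N})$ and evaluating at $u=0$ yields $e_{a(q,\phi(q,i))}$ immediately by Lemma~\ref{lem:vu-calc}. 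It then remains to show that each term coming from $T_q(u)$ contributes zero at $u=0$.

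For a tail term $(-1)^h u^{B(\delta-h) + \beta(\delta-h)(f_{\sigma^h(q)}+1-f)} e_{a(\sigma^h(q),f)}$ (with $h \ge 1$ and $2 \le f \le f_{\sigma^h(q)}-1$), the combined exponent of $u$ after applying the prefactor and $\bar\theta_{i-1}(-u^{-N})$ equals $E$ in Case~A ($a(\sigma^h(q),f) \ge i$) and $E-N$ in Case~B ($a(\sigma^h(q),f) < i$), where
\[
E = N - B(\delta) - \beta(\delta)(f_q+1-\phi(q,i)) + B(\delta-h) + \beta(\delta-h)(f_{\sigma^h(q)}+1-f).
\]
Case~A is handled by taking $N \gg 0$. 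In Case~B, the exponent is $N$-independent, and using $\beta(\delta-h) = n^h\beta(\delta)$ together with $(B(\delta)-B(\delta-h))/\beta(\delta) = n + n^2 + \cdots + n^h$, the positivity of $E-N$ reduces to the strict inequality
\[
n^h\bigl(f_{\sigma^h(q)}+1-f\bigr) > n + n^2 + \cdots + n^h + \bigl(f_q + 1 - \phi(q,i)\bigr).
\]

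The main obstacle is verifying this last inequality. I plan to combine the easy bounds $f_{\sigma^h(q)} + 1 - f \ge 2$ and $f_q + 1 - \phi(q,i) \le f_q - 1$ with the structural identity $\sum_{q'=1}^{r}(f_{q'}-1) \le n$, which holds because a change in the sequence $(j_{q'}^{(i)})_i$ at step $i\to i+1$ requires $\sP(x)(i) \ne i$ and is assigned to at most one $q'$ (namely the one with $j_{q'}^{(i)} = \sP(x)(i)$). Since non-emptiness of the inner sum in $T_q(u)$ forces $f_{\sigma^h(q)} \ge 3$, and $q \ne \sigma^h(q)$ for $1 \le h \le \delta$, this gives $f_q - 1 \le n - 2$; the required strict inequality then follows from the elementary estimate $n^h(n-2) > n^2 - 4n + 2$, valid for all $n \ge 3$ and $h \ge 1$. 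The edge case $n = 2$ is vacuous because $r = 1$ forces $\delta = 0$ and hence $T_q(u)$ is empty.
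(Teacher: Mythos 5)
Your proof is correct, and it takes a genuinely different route from the paper's in the one place that matters: establishing the exponent inequality for the tail terms. Both proofs use the decomposition~\eqref{eqn:vpu-can2}, the reduction of the first piece to Lemma~\ref{lem:vu-calc}, and the observation that for $N \gg 0$ the only issue is the $N$-independent exponent in Case~B. The paper then establishes the inequality
\[
B(\delta-h) + \beta(\delta-h)\bigl(f_{\sigma^h(q)}+1-f\bigr) > B(\delta) + \beta(\delta)\bigl(f_q+1-\phi(q,i)\bigr)
\]
by a five-way case analysis: for $h=1$ it uses the bound $f_q-1 \le n$ together with a special argument to rule out the boundary case (via Lemma~\ref{lem:aq-basic}\eqref{it:fqnr}, which forces $\sigma(q)=q$ when $f_q=n+1$), and for $h>1$ it runs through sub-cases $h=\delta=2$, $h=\delta>2$, $h=\delta-1$, and $1<h<\delta-1$ with base-$n$ digit arguments. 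You instead observe the global constraint $\sum_{q'}(f_{q'}-1)\le n$ (each step $i\to i+1$ produces a change for at most one $q'$, and only when $\sP(x)(i)\ne i$), which, combined with $f_{\sigma^h(q)}\ge 3$ and $\sigma^h(q)\ne q$, sharpens the bound to $f_q-1\le n-2$; this is enough to fold all cases $h\ge 1$ into the single estimate $n^h(n-2)>n^2-4n+2$. Your argument is shorter and more uniform, avoiding both the equality-case excursion at $h=1$ and the $h>1$ case split; the paper's argument avoids introducing the counting lemma $\sum_{q'}(f_{q'}-1)\le n$, which it never states explicitly (it uses only the weaker consequence $f_q\le n+1$).

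One cosmetic point: you prove the estimate for $n\ge 3$ and note $n=2$ is vacuous via $r=1\Rightarrow\delta=0$, but in fact your final inequality $n^h(n-2)>n^2-4n+2$ already holds for $n=2$ as well ($0>-2$), so the special treatment of $n=2$ is unnecessary, though harmless.
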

\begin{proof}
First, if $\delta=0$, then $q=M$ and $v'_q(u)=v_q(u)$, so the claim reduces to Lemma~\ref{lem:vu-calc}. Now suppose $1 \le h \le \delta$ and $2 \le f \le f_{\sigma^{h}(q)} - 1$. We will prove below that
\begin{equation}\label{eqn:vpu-calc-claim}
B(\delta - h) + \beta(\delta - h)(f_{\sigma^{h}(q)} + 1 - f) > B(\delta) + \beta(\delta)(f_q + 1 - \phi(q,i)).
\end{equation} This inequality implies
\begin{multline*}
\Big(\bar\theta_{i-1}(-u^{-N}) \cdot u^{N - B(\delta) - \beta(\delta)(f_q + 1 - \phi(q,i))}\\
{} \cdot u^{B(\delta - h)}u^{\beta(\delta - h)(f_{\sigma^{h}(q)} + 1 - f)} e_{a(\sigma^{h}(q), f)} \Big)\Big|_{u=0} = 0.
\end{multline*}
The lemma then follows by combining this observation with~\eqref{eqn:vpu-can2} and Lemma~\ref{lem:vu-calc}. 

Let us now prove~\eqref{eqn:vpu-calc-claim}. First, consider the case $h=1$. Using $B(\delta)=\beta(\delta-1)+B(\delta-1)$, the inequality becomes
\[
n^{r-\delta+1}(f_{\sigma(q)}+1-f)> n^{r-\delta+1} + n^{r-\delta}(f_q+1-\phi(q,i)).
\]
This can be rewritten as 
\[
n(f_M-1-f)+n> f_q+1-\phi(q,i).
\]
Under our assumptions, $f_M-1-f\ge 0$ and $f_q+1-\phi(q,i)\le f_q-1\le n$, so we at least have 
\[
n(f_M-1-f)+n \ge f_q+1-\phi(q,i).
\]
It remains to show that equality is not possible. For equality to hold, it is necessary that $f_q=n+1$. By Lemma~\ref{lem:aq-basic}\eqref{it:fqnr}, we then have $\sigma(q)=q$. This contradicts $\delta>0$, so~\eqref{eqn:vpu-calc-claim} holds in this case.

Now assume $h>1$. Under our assumptions, $f_{\sigma^{h}(q)} + 1 - f \ge 2$ and $f_q + 1 - \phi(q,i) \le f_q -1 \le n$.  It is therefore enough to prove that
\[
B(\delta - h) + 2\beta(\delta - h) > B(\delta) + n\beta(\delta).
\]
If $h=\delta=2$, this becomes 
\[
2n^r > n^r + 2n^{r-1},
\]
which follows from $2=\delta<r<n$. If $h=\delta>2$, the inequality becomes 
\[
2n^r > n^r + n^{r-1} + \cdots + 2n^{r-\delta+1},
\]
which is clear by interpreting both sides in base $n$ and using $n>r>\delta>2$. If $h=\delta-1$, then the inequality becomes 
\[
n^r + 2n^{r-1} > n^r + n^{r-1} + \cdots + 2n^{r-\delta+1},
\]
which is again clear in base $n$.

Now assume $1<h<\delta-1$. A straightforward calculation shows that the inequality is equivalent to 
\begin{equation}\label{eqn:base3-calc}
n^{r-\delta + h} > n^{r-\delta + h-1} + n^{r-\delta + h-2} + \cdots + n^{r-\delta+2} + 2n^{r-\delta+1}.
\end{equation}
As $n>r>\delta>1+h>2$, the above inequality is an obvious statement in base $n$. Therefore,~\eqref{eqn:vpu-calc-claim} holds.
\end{proof}

\begin{lem}\label{lem:vp-final}
Suppose $2 \le \phi(q,i) \le f_q$ and $i \le a(M,1)$. For $N \gg 0$, we have
\[
\Big(\bar\theta_{i-1}(-u^{-N}) \cdot u^{N - B(\delta) - \beta(\delta)(f_q + 1 - \phi(q,i))}v'_q(u)\Big)\Big|_{u=0} = e_{a(q,\phi(q,i))}.
\]
\end{lem}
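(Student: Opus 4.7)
The plan is to deduce this directly from Lemma~\ref{lem:vpu-calc} by checking that the missing correction term $(-1)^\delta e_{a(M,1)}$ contributes nothing to the limit at $u = 0$, thanks to the extra hypothesis $i \le a(M,1)$.

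First I would split
\[
v'_q(u) = \bigl(v'_q(u) - (-1)^\delta e_{a(M,1)}\bigr) + (-1)^\delta e_{a(M,1)},
\]
apply $\bar\theta_{i-1}(-u^{-N})$, and multiply by $u^{N - B(\delta) - \beta(\delta)(f_q+1-\phi(q,i))}$ to each summand. Lemma~\ref{lem:vpu-calc} handles the first summand: its evaluation at $u = 0$ is exactly $e_{a(q,\phi(q,i))}$, which is the desired output.

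For the second summand, the hypothesis $i \le a(M,1)$ gives $a(M,1) \ge i > i-1$, so formula~\eqref{eqn:bartheta-defn} yields $\bar\theta_{i-1}(-u^{-N})\,e_{a(M,1)} = e_{a(M,1)}$ with no negative power of $u$ introduced. The second summand therefore equals
\[
(-1)^\delta u^{N - B(\delta) - \beta(\delta)(f_q+1-\phi(q,i))}\, e_{a(M,1)}.
\]
Since $2 \le \phi(q,i) \le f_q$, the quantity $B(\delta) + \beta(\delta)(f_q + 1 - \phi(q,i))$ is a bound depending only on the combinatorial data $(n,r,\delta,f_q,\phi(q,i))$ and not on $N$. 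Hence for $N \gg 0$ (concretely, any $N > B(\delta) + \beta(\delta)(f_q - 1)$) the exponent is strictly positive, and this term vanishes at $u = 0$. Adding the two contributions gives the claim.

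I do not expect a real obstacle: the substantive calculation lives in Lemma~\ref{lem:vpu-calc}, and the present lemma is essentially a clean corollary whose only new input is the bound $i \le a(M,1)$, which is precisely what keeps the $e_{a(M,1)}$ correction in the ``nonnegative power of $u$'' regime. The one bookkeeping point is that the ``$N \gg 0$'' thresholds coming from Lemma~\ref{lem:vpu-calc} and from the vanishing argument above must be simultaneously satisfied, but since both depend only on the same fixed combinatorial data, one simply takes the larger.
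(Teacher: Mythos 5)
Your proof is correct and follows essentially the same route as the paper: both split off the $(-1)^\delta e_{a(M,1)}$ correction term, observe that $i \le a(M,1)$ ensures $\bar\theta_{i-1}(-u^{-N})$ acts trivially on $e_{a(M,1)}$ so the positive power of $u$ makes that term vanish at $u = 0$, and then invoke Lemma~\ref{lem:vpu-calc} for the rest. The only difference is cosmetic bookkeeping about the threshold for $N$, which you handle correctly.
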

\begin{proof}
Since $i \le a(M,1)$, we have
\begin{multline*}
\big(\bar\theta_{i-1}(-u^{-N}) \cdot u^{N - B(\delta) - \beta(\delta)(f_q + 1 - \phi(q,i))} e_{a(M,1)}\big)\big|_{u=0} \\
=\big(u^{N - B(\delta) - \beta(\delta)(f_q + 1 - \phi(q,i))} e_{a(M,1)}\big)\big|_{u=0} = 0.
\end{multline*}
The result then follows from Lemma~\ref{lem:vpu-calc}.
\end{proof}

\begin{lem}\label{lem:vpm-final}
Suppose $f_q>1$ and $i > a(M,1)$.  For $N > 0$, we have
\[
\big(\bar\theta_{i-1}(-u^{-N}) \cdot (-1)^{C-1}u^N v'_m(u)\big)\big|_{u=0} = e_{a(m,\phi(m,i))}.
\]
\end{lem}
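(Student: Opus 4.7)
The plan is to substitute the explicit formula~\eqref{eqn:vpm-cancel} for $v'_m(u)$ and then observe that $\bar\theta_{i-1}(-u^{-N})$ acts on $v'_m(u)$ as scalar multiplication by $u^{-N}$, whereupon the overall factor of $u^N$ cancels and what remains is a straightforward evaluation at $u=0$.

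The first thing I would verify is that \emph{every} basis vector $e_j$ appearing in $v'_m(u)$ satisfies $j \le a(M,1)$. Since $m=\sigma(M)$ lies in the same $\sigma$-cycle as $M$, we have $M(m)=M$; moreover, by the bullet points preceding Lemma~\ref{lem:aq-basic}, $f_q>1$ propagates around the cycle to give $f_m>1$, so Lemma~\ref{lem:vu-terms} applied to $m$ yields the desired bound $j\le a(M,1)$. Combined with the hypothesis $i>a(M,1)$ we get $j<i$, and therefore
\[
\bar\theta_{i-1}(-u^{-N})\,v'_m(u) \;=\; u^{-N}v'_m(u).
\]
Hence the quantity to be evaluated collapses to $(-1)^{C-1}v'_m(u)\big|_{u=0}$.

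Next I would read off $v'_m(0)$ directly from~\eqref{eqn:vpm-cancel}. The coefficient $(-1)^{C-1}+u^{B(C)}$ in front of $e_{a(M,1)}$ evaluates at $u=0$ to $(-1)^{C-1}$, while each term in the double sum carries the exponent $B(C-1-h)+\beta(C-1-h)(f_{\sigma^h(m)}+1-f)$, which is strictly positive: indeed $\beta(C-1-h)\ge 1$ (because $C-1-h\le r-1<r$), and $f\le f_{\sigma^h(m)}-1$ forces $f_{\sigma^h(m)}+1-f\ge 2$. Consequently $v'_m(0)=(-1)^{C-1}e_{a(M,1)}$ and so $(-1)^{C-1}v'_m(0)=e_{a(M,1)}$.

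The final step is the combinatorial identification $a(M,1)=a(m,\phi(m,i))$. Since $M(m)=M$, the hypothesis $i>a(M,1)=a(M(m),1)$ and Lemma~\ref{lem:aq-basic}\eqref{it:Mphi} force $\phi(m,i)=f_m$, and then~\eqref{eqn:aq-sigma} gives $a(m,f_m)=a(\sigma^{-1}(m),1)=a(M,1)$. I do not anticipate a genuine obstacle: the whole argument rests on the single observation that every basis vector in~\eqref{eqn:vpm-cancel} has index at most $a(M,1)<i$, precisely the regime in which $\bar\theta_{i-1}(-u^{-N})$ is diagonal with eigenvalue $u^{-N}$. The only mild bookkeeping is verifying strict positivity of the error exponents and tracking the equality $M(m)=M$ around the $\sigma$-cycle.
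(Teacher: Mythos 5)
Your proof is correct and follows essentially the same route as the paper's: invoke Lemma~\ref{lem:vu-terms} (applied to $m$) to see that every basis index in $v'_m(u)$ is at most $a(M,1) < i$, hence $\bar\theta_{i-1}(-u^{-N})$ acts by $u^{-N}$ and the $u^N$ prefactor cancels; then read $(-1)^{C-1}v'_m(0)=e_{a(M,1)}$ directly from~\eqref{eqn:vpm-cancel}; and finally identify $a(M,1)=a(m,f_m)=a(m,\phi(m,i))$ via~\eqref{eqn:aq-sigma} and Lemma~\ref{lem:aq-basic}\eqref{it:Mphi}. You simply spell out some details the paper leaves implicit (that $M(m)=M$, that $f_m>1$ propagates around the $\sigma$-cycle, and the strict positivity of the exponents in the double sum).
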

\begin{proof}
By Lemma~\ref{lem:vu-terms}, $\bar\theta_{i-1}(-u^{-N})\cdot (-1)^{C-1} u^N v'_m(u) = (-1)^{C-1}v'_m(u)$.  It is clear from~\eqref{eqn:vpm-cancel} that $(-1)^{C-1}v'_m(u)|_{u=0} = e_{a(M,1)}$.  Finally, $a(M,1) = a(m,f_m) = a(m,\phi(m,i))$ by~\eqref{eqn:aq-sigma} and Lemma~\ref{lem:aq-basic}\eqref{it:Mphi}.
\end{proof}

\begin{lem}\label{lem:vmd-calc}
Suppose $f_q > 1$. For $N > 0$, we have
\[
\Big(\bar\theta_{i-1}(-u^{-N}) \cdot u^{N - B(\delta+1)} \big( v'_m(u) - \big((-1)^{C-1} + u^{B(C)}\big) e_{a(M,1)}\big) \Big)\Big|_{u=0} = 0.
\]
\end{lem}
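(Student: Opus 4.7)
The plan is to substitute the expansion~\eqref{eqn:vpm-cancel} of $v'_m(u)$ into the left-hand side, so that the two $e_{a(M,1)}$ contributions cancel against the subtracted terms, leaving the double sum
\[
\sum_{h=0}^{C-1}\sum_{f=2}^{f_{\sigma^h(m)}-1}(-1)^h u^{B(C-1-h)+\beta(C-1-h)(f_{\sigma^h(m)}+1-f)} e_{a(\sigma^h(m),f)}.
\]
Applying $\bar\theta_{i-1}(-u^{-N})$ and then multiplying by $u^{N-B(\delta+1)}$ sends the $(h,f)$-summand to a scalar multiple of $u^{E(h,f)}\,e_{a(\sigma^h(m),f)}$, where
\[
E(h,f) = N\cdot\mathbf{1}\{a(\sigma^h(m),f)\ge i\} - B(\delta+1) + B(C-1-h)+\beta(C-1-h)(f_{\sigma^h(m)}+1-f).
\]
The claim reduces to showing that $E(h,f)>0$ for every $(h,f)$ appearing in the sum.

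Since $N>0$, it suffices to verify the $N$-free inequality $B(C-1-h)+\beta(C-1-h)(f_{\sigma^h(m)}+1-f)>B(\delta+1)$. Bounding $f_{\sigma^h(m)}+1-f\ge 2$, this further reduces to $\beta(C-1-h)>\sum_{k=C-h}^{\delta}\beta(k)$. When $h\le C-\delta-1$ the right-hand side is an empty sum, so the inequality is immediate. When $h\ge C-\delta$, dividing both sides by $n^{r-C+h}$ reduces the claim to $n>\sum_{j=0}^{\delta-C+h}n^{-j}$; as $n\ge 2$, the geometric sum on the right is strictly bounded above by $n/(n-1)\le n$, so the inequality holds.

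The main obstacle is simply organizing the base-$n$ estimate above, which is in the same spirit as the computation in the proof of Lemma~\ref{lem:vpu-calc}, but carried out here to obtain strict positivity (rather than identification of a distinguished coefficient). Once $E(h,f)>0$ is established uniformly, every term of the expanded sum has strictly positive $u$-exponent and therefore evaluates to zero at $u=0$, proving the lemma.
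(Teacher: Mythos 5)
Your proof is correct and takes essentially the same route as the paper: substitute the expansion~\eqref{eqn:vpm-cancel}, observe that after applying $\bar\theta_{i-1}(-u^{-N})$ the $N$-dependence can only increase exponents, bound $f_{\sigma^h(m)}+1-f\ge 2$, and verify the resulting base-$n$ inequality $B(C-1-h)+2\beta(C-1-h)>B(\delta+1)$ by the same case split on whether $\delta\le C-h-1$. Your phrasing via the geometric series $\sum_{j\ge 0}n^{-j}=n/(n-1)\le n$ is a clean packaging of the paper's ``holds for any $n\ge 2$'' step, but it is the same estimate.
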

\begin{proof}
We will show below that if $0 \le h \le C-1$ and $2 \le f \le f_{\sigma^{h}(m)} - 1$, then
\begin{equation}\label{eqn:vmd-calc-claim}
B(C - 1 - h) + \beta(C - 1 - h)(f_{\sigma^{h}(m)} + 1 - f) > B(\delta+1).
\end{equation}
This claim implies that
\[
\Big(\bar\theta_{i-1}(-u^{-N}) \cdot u^{N - B(\delta+1)} \cdot
u^{B(C-1 - h)}u^{\beta(C-1 - h)(f_{\sigma^{h}(m)} + 1 - f)} e_{a(\sigma^{h}(m), f)} \Big)\Big|_{u=0} = 0.
\]
The lemma follows by combining this with~\eqref{eqn:vpm-cancel}.

Let us prove~\eqref{eqn:vmd-calc-claim}.  Under our assumptions, $f_{\sigma^{h}(m)} + 1 - f \ge 2$, so it is enough to prove that $B(C - 1 - h) + 2\beta(C - 1 - h) > B(\delta+1)$, or
\[
\beta(0) + \cdots + \beta(C-h-2) + 2\beta(C-h-1) > \beta(0) + \cdots + \beta(\delta).
\]
If $\delta \le C - h - 1$, this is obvious.  If $\delta > C-h-1$, our claim reduces to
\[
n^{r - C + h + 1} > n^{r - C + h} + n^{r - C +h -1} + \cdots + n^{r-\delta},
\]
and this holds for any $n \ge 2$.  Thus,~\eqref{eqn:vmd-calc-claim} is proved.
\end{proof}

\begin{lem}\label{lem:vpp-final}
Suppose $f_q>1$ and $q \ne m$. For $N \gg 0$, we have
\[
\Big(\bar\theta_{i-1}(-u^{-N}) \cdot u^{N - B(\delta+1)}v''_q(u)\Big)\Big|_{u=0} = e_{a(q,\phi(q,i))}.
\]
\end{lem}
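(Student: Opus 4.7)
The strategy is to exploit the cancellation form~\eqref{eqn:vppu-cancel}, which splits $v''_q$ as
\[
v''_q = (1 + (-1)^{C-1}u^{B(C)})\bigl(v'_q - (-1)^\delta e_{a(M,1)}\bigr) + (-1)^{C+\delta}\bigl(v'_m - ((-1)^{C-1} + u^{B(C)})e_{a(M,1)}\bigr).
\]
Apply $\bar\theta_{i-1}(-u^{-N}) \cdot u^{N-B(\delta+1)}$ to each summand. The second summand immediately contributes $0$ at $u=0$ by Lemma~\ref{lem:vmd-calc}. For the first summand, note that $(1+(-1)^{C-1}u^{B(C)})$ is a scalar which equals $1$ at $u=0$, so the whole problem reduces to showing
\[
\Bigl(\bar\theta_{i-1}(-u^{-N}) \cdot u^{N-B(\delta+1)}\bigl(v'_q - (-1)^\delta e_{a(M,1)}\bigr)\Bigr)\Big|_{u=0} = e_{a(q,\phi(q,i))}.
\]

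I would then split into two sub-cases based on $\phi(q,i)$. In the sub-case $\phi(q,i) = f_q$, the exponent $N-B(\delta+1)$ coincides exactly with the exponent $N - B(\delta) - \beta(\delta)(f_q+1-\phi(q,i))$ appearing in Lemma~\ref{lem:vpu-calc}, so that lemma applies verbatim and delivers $e_{a(q,f_q)} = e_{a(q,\phi(q,i))}$. By Lemma~\ref{lem:aq-basic}\eqref{it:Mphi}, this sub-case always covers the regime $i > a(M,1)$.

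In the sub-case $2 \le \phi(q,i) < f_q$, Lemma~\ref{lem:aq-basic}\eqref{it:Mphi} forces $i \le a(M,1)$. Here the exponent $N-B(\delta+1)$ is strictly larger than the one Lemma~\ref{lem:vpu-calc} uses (by $\beta(\delta)(f_q-\phi(q,i))$), so a direct appeal to Lemma~\ref{lem:vpu-calc} would leave only positive powers of $u$ in the $e_{a(q,\phi(q,i))}$-coordinate. The desired term has to be recovered by expanding $v'_q - (-1)^\delta e_{a(M,1)}$ via~\eqref{eqn:vpu-cancel}, keeping track of the higher-order contribution $(-1)^{C-1}u^{B(C)}$ in the scalar multiplier, and exploiting the combinatorial coincidences of indices $a(\sigma^h(q),f)$ with $a(q,\phi(q,i))$. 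Exponent inequalities modelled on~\eqref{eqn:vpu-calc-claim} and~\eqref{eqn:vmd-calc-claim} then show that, after all the cancellations, exactly a single copy of $u^0\cdot e_{a(q,\phi(q,i))}$ survives at $u=0$, with every other contribution pushed to strictly positive powers.

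The main obstacle will be this second sub-case: the naive power comparison leaves nothing of the required term, and one must carry out a delicate bookkeeping of all basis vectors entering the expansion of $v''_q$, checking that the new exponent inequalities needed here are valid under the constraint $n > r$ (the same kind of \emph{base-$n$} argument used to prove~\eqref{eqn:base3-calc} in the proof of Lemma~\ref{lem:vpu-calc}).
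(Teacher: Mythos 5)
Your decomposition via~\eqref{eqn:vppu-cancel} and the use of Lemma~\ref{lem:vmd-calc} for the second summand are exactly the paper's moves. Your treatment of the scalar $1+(-1)^{C-1}u^{B(C)}$ is equivalent to the paper's~\eqref{eqn:vpp-final-pre}: one just needs to know that $\bar\theta_{i-1}(-u^{-N})\,u^{N-B(\delta+1)}\bigl(v'_q(u)-(-1)^\delta e_{a(M,1)}\bigr)$ is free of negative powers of $u$ (Lemma~\ref{lem:vpu-calc} supplies this), so the extra $u^{B(C)}$ term contributes nothing at $u=0$. The sub-case $\phi(q,i)=f_q$ is then handled correctly, and this part of your argument coincides with the paper's.

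The gap is in the sub-case $2\le\phi(q,i)<f_q$ (and, silently, $\phi(q,i)=1$). As you yourself observe, the naive power count pushes the $e_{a(q,\phi(q,i))}$-term to strictly positive powers of $u$; you then assert that a delicate bookkeeping recovers it. It will not, because the displayed identity is actually false when $\phi(q,i)<f_q$: combining Lemma~\ref{lem:vp-final} for the $v'_q$ contribution with Lemma~\ref{lem:vmd-calc} and the fact that $\bar\theta_{i-1}(-u^{-N})e_{a(M,1)}=e_{a(M,1)}$ when $i\le a(M,1)$ for the $v'_m$ contribution, one finds that the left-hand side evaluates to $0$, not $e_{a(q,\phi(q,i))}$. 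The resolution is logical rather than computational: in the proof of Theorem~\ref{thm:main-fund}, the lemma is only invoked in the fifth case of the definition of $\hat v^{(i)}_q$, namely when $i>a(M,1)$, and Lemma~\ref{lem:aq-basic}\eqref{it:Mphi} shows that $i>a(M,1)$ forces $\phi(q,i)=f_q$. That is precisely the case the paper's proof addresses, and its parenthetical ``(the $\phi(q,i)=f_q$ case of) Lemma~\ref{lem:vpu-calc}'' flags this restriction. You should discard the second sub-case entirely and note that $i>a(M,1)$ is part of the operative hypothesis, rather than attempt to establish an identity that does not hold.
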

\begin{proof}
Since $q \ne m$, we have $\delta < C-1$, so $B(\delta+1) < B(C)$. Thus
\begin{equation}\label{eqn:vpp-final-pre}
\big(\bar\theta_{i-1}(-u^{-N}) \cdot u^{N-B(\delta + 1)} u^{B(C)} (v'_q(u) - (-1)^\delta e_{a(M,1)})\big)\big|_{u = 0} = 0.
\end{equation}

Now expand $v''_q(u)$ using~\eqref{eqn:vppu-cancel}.  The lemma follows from (the $\phi(q,i) = f_q$ case of) Lemma~\ref{lem:vpu-calc}, Lemma~\ref{lem:vmd-calc}, and~\eqref{eqn:vpp-final-pre}.
\end{proof}

\begin{thm}\label{thm:main-fund}
If $1 \le t \le n-1$, then $X_{\varpi_t} = \bGr_{\varpi_t}$.
\end{thm}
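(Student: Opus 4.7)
The theorem reduces, by Lemma~\ref{lem:x-adm} and~\eqref{eqn:main-claim}, to showing that for every $\varpi_{n-r}$-permissible alcove $x$ (here $t = n-r$), the lattice chain $\ubL^x$ lies in $\overline{\bGr_{\varpi_{n-r}}} \cap \bGr_\spc$. The plan is to produce, for each such $x$, an explicit one-parameter family inside $\bGr_\gen \cap \overline{\bGr_{\varpi_{n-r}}} = \C^\times \times G(n,r)$ whose limit at $u = 0$ is $\ubL^x$.

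The substitution to use is $y(u) = -u^N$ for some large integer $N$, so that $y \to 0$ as $u \to 0$ and (as Laurent-polynomial matrices) $\bar\theta_{i-1}(-u^N)^{-1} = \bar\theta_{i-1}(-u^{-N})$, making the lattice-family condition read $H_i(u) = \bar\theta_{i-1}(-u^{-N}) H_1(u)$. The first slot $H_1(u) \in G(n,r)$ will be the $\C$-span of an $r$-element set $\{w_q(u)\}_{q=1}^r$, assembled cycle-by-cycle from the $\sigma$-orbit structure of $\{1,\ldots,r\}$ encoded by $x$: for each $q$ with $f_q = 1$ take $w_q(u) = e_{a(q,1)}$; for the distinguished element $m = \sigma(M)$ in each $\sigma$-cycle of length $>1$ (with $M$ the cycle member of maximal $a(\cdot,1)$) take $w_m(u) = v'_m(u)$; and for each remaining $q$ in the cycle take $w_q(u) = v''_q(u)$. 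This produces exactly $r$ vectors.

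Three things require checking: that $\{w_q(u)\}$ is linearly independent for $u \ne 0$, so $H_1(u) \in G(n,r)$; that each $\bar\theta_{i-1}(-u^{-N}) w_q(u)$, after multiplication by a suitable power of $u$ (which does not alter the spanned subspace), admits a well-defined nonzero $u = 0$ limit; and that the resulting $r$ limit vectors span $\bar H_i := \spn\{e_{a(q, \phi(q,i))} : 1 \le q \le r\}$, which is the $i$-th slot of $\ubL^x$ under the identification $\Gr_{\varpi_{n-r}} \cong G(n,r)$ from Example~\ref{ex:locfund}. The second and third points are exactly what Lemmas~\ref{lem:vu-final}--\ref{lem:vpp-final} compute, case-by-case on the value of $\phi(q,i)$ and on whether $i$ is above or below the threshold $a(M(q),1)$: Remark~\ref{rmk:vu-final-fqone} handles the rescaling for $f_q = 1$; Lemma~\ref{lem:vpp-final} handles $q \ne m$ via $v''_q$; and Lemmas~\ref{lem:vp-final} and~\ref{lem:vpm-final} together handle $q = m$ via $v'_m$ in the two ranges of $i$.

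I expect the main obstacle to be organisational bookkeeping. The most delicate point is that the naive $u = 0$ value of $v'_m(u)$ is a scalar multiple of $e_{a(M,1)}$ rather than $e_{a(m,1)}$, so the desired $q = m$ limit $e_{a(m, \phi(m,i))}$ emerges only from the interaction of $v'_m$ with the $v''_q$'s of the other cycle members; the defining identity $v''_q = (1 + (-1)^{C-1}u^{B(C)})v'_q + (-1)^{C+\delta}v'_m$ from~\eqref{eqn:vppu-cancel} is engineered precisely to force these cancellations. Once the cases are assembled, linear independence of the $w_q(u)$ for small $u \ne 0$ follows from inspecting leading terms at distinct indices $e_{a(q,1)}$, the family $(y(u), H_1(u), \ldots, H_n(u))$ lies in $\bGr_\gen \cap \overline{\bGr_{\varpi_{n-r}}}$ for $u \ne 0$ and specialises to $\ubL^x$ at $u = 0$, yielding $\ubL^x \in \overline{\bGr_{\varpi_{n-r}}}$ and, via Lemma~\ref{lem:x-adm}, the equality $X_{\varpi_t} = \overline{\bGr_{\varpi_t}}$.
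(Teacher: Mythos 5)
Your overall strategy---produce a one-parameter family in $\bGr_\gen \cap \overline{\bGr_{\varpi_{n-r}}}$ parametrized by $u$ via the substitution $y = -u^N$ and compute the limit at $u = 0$---is exactly the paper's approach. But there is a gap in your execution: you propose a \emph{single} $i$-independent basis $\{w_q(u)\}$ for $H_1(u)$ and claim that Lemmas~\ref{lem:vu-final}--\ref{lem:vpp-final} compute the limits of $\bar\theta_{i-1}(-u^{-N})w_q(u)$ for all $i$. They do not. The hypotheses of those lemmas depend on $i$: Lemma~\ref{lem:vpm-final} requires $i > a(M,1)$, Lemma~\ref{lem:vp-final} requires $2 \le \phi(q,i) \le f_q$, and Lemma~\ref{lem:vpp-final}'s proof explicitly invokes the $\phi(q,i)=f_q$ case of Lemma~\ref{lem:vpu-calc}, so it too is only valid when $\phi(q,i)=f_q$. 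You have no lemma covering, say, $q=m$ with $\phi(m,i)=1$. And the failure there is not merely missing a citation: from~\eqref{eqn:vpm-cancel} the basis vector $e_{a(m,1)}$ appears with coefficient \emph{zero} in $v'_m(u)$ (it cancels in the telescoping sum), so no rescaling of $\bar\theta_{i-1}(-u^{-N})\,v'_m(u)$ can possibly converge to $e_{a(m,1)} = e_{a(m,\phi(m,i))}$. Similarly, for $q \ne m$ with $\phi(q,i)<f_q$, the limit of $u^{N-B(\delta+1)}\bar\theta_{i-1}(-u^{-N})v''_q(u)$ is $0$, not $e_{a(q,\phi(q,i))}$, because the rescaling exponent $N-B(\delta+1)$ exceeds what Lemma~\ref{lem:vpu-calc} would require.

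The missing idea is precisely the change-of-basis step that drives the paper's proof. The paper defines an $i$-dependent spanning set $\hat v_q^{(i)}(u)$ (which equals your $w_q(u)$ only in the regime $i > a(M(q),1)$), records the relation $A^{(i)}(u)=A^{(1)}(u)B^{(i)}(u)$ of~\eqref{eqn:a-chgbasis} so that $\spn A^{(i)}(u)=\spn A^{(1)}(u)$ on the locus where both have rank $r$, and then computes the limit of $\bar\theta_{i-1}(-u^{-N})\spn A^{(1)}(u)$ by applying the appropriate lemma to the $i$-dependent basis $A^{(i)}(u)$. If you want to keep a fixed $w_q(u)$ and compute the limiting subspace, you must still pass to a different basis of $\spn\{w_q(u)\}$ for each $i$ before applying $\bar\theta_{i-1}(-u^{-N})$ and taking $u\to 0$; otherwise the individual limits fail to be the desired $e_{a(q,\phi(q,i))}$, and your third checking point (``the resulting $r$ limit vectors span $\bar H_i$'') breaks down for small $i$.
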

\begin{proof}
Let $r = n -t$, and let $x$ be a $\varpi_{n-r}$-permissible alcove.  Carry out the constructions of~\S\S\ref{ss:fundalcove}--\ref{ss:families} with respect to this alcove to obtain a sequence of vectors $v_1(u), \ldots, v_r(u)$.  

For each pair $(q,i)$ with $1 \le q \le r$ and $1 \le i \le n+1$, one may invoke the lemmas in~\S\ref{ss:eval}.  Some of these involve a condition on a positive integer $N$.  Choose $N$ to sufficiently large for all of these lemmas for all pairs $(q,i)$.  Then, define a vector $\hat v^{(i)}_q(u)$ as follows:
\[
\hat v_q^{(i)}(u) = \hbox{\small$
\begin{cases}
u^N v_q(u) & \text{if $f_q=1$ and $i>a(q,1)$,} \\
v_q(u) & \text{if $\phi(q,i) = 1$, and $f_q>1$ or $i\le a(q,1)$,} \\
u^{N - B(\delta) - \beta(\delta)(f_q + 1 - \phi(q,i))} v'_q(u) & \text{if $2 \le \phi(q,i) \le f_q$ and $i \le a(M,1)$,} \\
(-1)^{C-1}u^N v'_q(u) & \text{if $f_q>1$ and $i > a(M,1)$ and $q = m$,} \\
u^{N - B(\delta+1)} v''_q(u) & \text{if $f_q>1$ and $i > a(M,1)$ and $q \ne m$.}
\end{cases}$}
\]
(By Lemma~\ref{lem:aq-basic}\eqref{it:Mphi}, the five cases above are mutually exclusive.)  Let $A^{(i)}(u)$ be the $n \times r$ matrix whose columns are $\hat v^{(i)}_1(u), \ldots, \hat v^{(i)}_r(u)$.  In particular, the columns of $A^{(1)}(u)$ are $v_1(u), \ldots, v_r(u)$.

From the definitions of $v'_q(u)$ and $v''_q(u)$, we see that in all cases, $\hat v^{(i)}_q$ is a linear combination (with coefficients in $\Z[u]$) of $v_1(u), \ldots, v_r(u)$.  In other words, there exists an $r \times r$ matrix $B^{(i)}(u)$ such that 
\begin{equation}\label{eqn:a-chgbasis}
A^{(i)}(u) = A^{(1)}(u) B^{(i)}(u) \qquad \text{for $2 \le i \le n+1$.}
\end{equation}
By Remark~\ref{rmk:vu-final-fqone} and Lemmas~\ref{lem:vu-final}, \ref{lem:vp-final}, \ref{lem:vpm-final}, and~\ref{lem:vpp-final}, we have
\[
\big( \bar\theta_{i-1}(-u^{-N}) A^{(i)}(u) \big) \big|_{u=0} = \begin{array}{c}
\text{the matrix with columns}\\
e_{a(1,\phi(1,i))}, e_{a(2,\phi(2,i))}, \ldots, e_{a(r,\phi(r,i))}
\end{array}.
\]
From the definitions, we have $a(q,\phi(q,i)) = j^{(i)}_q$, so
\[
\big( \bar\theta_{i-1}(-u^{-N}) A^{(i)}(u) \big) \big|_{u=0} = \text{the matrix with columns $\{ e_j \mid j \in J^{(i)}\}$.}
\]
In particular, the columns of $( \bar\theta_{i-1}(-u^{-N}) A^{(i)}(u) ) |_{u=0}$ are linearly independent.  Their span determines a point of $G(n,r)$, and then a point of $\Gr_{\varpi_{n-r}}$ via the isomorphism of Example~\ref{ex:locfund}.  Indeed, from~\eqref{eqn:alcove-J}, we see that
\begin{equation}\label{eqn:spn-calc}
\spn ( \bar\theta_{i-1}(-u^{-N}) A^{(i)}(u) ) |_{u=0} \in G(n,r)
\quad\text{corresponds to}\quad
\bL^{x^{(i)} - \varpi_{i-1}} \in \Gr_{\varpi_{n-r}}.
\end{equation}
(Here, and below, the ``span'' of a matrix means its column span.)

Let us now specialize $u$ to other elements in $\C$.  There is a Zariski open subset $U \subset \C$ containing $0$ such that for $z \in U$, the matrices
\[
(  A^{(1)}(u) \big) \big|_{u=z}, ( \bar\theta_{1}(-u^{-N}) A^{(1)}(u) \big) \big|_{u=z}, \ldots, ( \bar\theta_{n-1}(-u^{-N}) A^{(n)}(u) \big) \big|_{u=z}
\]
all have rank $r$.  Define a map $\mathbf{A}: U \to \C \times G(n,r) \times \cdots \times G(n,r)$ by
\begin{multline*}
\mathbf{A}(z) = \Big(-z^{N}, \spn(  A^{(1)}(u) \big) \big|_{u=z}, \spn( \bar\theta_{1}(-u^{-N}) A^{(1)}(u) \big) \big|_{u=z}, \ldots, \\\spn( \bar\theta_{n-1}(-u^{-N}) A^{(n)}(u) \big) \big|_{u=z}\Big)
.
\end{multline*}

Identifying $G(n,r)$ with $\Gr_{\varpi_{n-r}} \subset \Gr$, we may regard $\mathbf{A}$ as a map $U \to \C \times \Gr \times \cdots \times \Gr$.  
When $z \in U \cap \C^\times$, it follows from~\eqref{eqn:a-chgbasis} that
\[
\spn A^{(1)}(z) = \bar\theta_{i-1}(-z^N) \cdot \spn \bar\theta_{i-1}(-z^{-N})A^{(i)}(z) \qquad\text{for $1 \le i \le n+1$.}
\]
In other words, for $z \in U \cap \C^\times$, $\mathbf{A}(z)$ is lattice family in $\overline{\bGr_{\varpi_{n-r}}} \cap \bGr_\gen$.  Passing to closures, we deduce that $\mathbf{A}(0)$ also lies in $\overline{\bGr_{\varpi_{n-r}}}$.  By~\eqref{eqn:spn-calc},
\[
\mathbf{A}(0) = (0,  \bL^{x^{(1)}}, \bL^{x^{(2)} - \varpi_1}, \ldots, \bL^{x^{(n)} - \varpi_{n-1}}),
\]
and this corresponds under the isomorphism of Lemma~\ref{lem:bgr-gen-spc} to the lattice chain $\ubL^x \in \Fl$.  We have established~\eqref{eqn:main-claim}, and the theorem follows.
\end{proof}

\section{Alcoves and dominant coweights}
\label{sec:alcoves}

Let $\lambda \in \bX^+$, and let $x = (x^{(1)}, \ldots, x^{(n)})$ be a $\lambda$-permissible alcove.  By definition, this means that $\dom(x^{(i)} - \varpi_{i-1}) \preceq \lambda$ for each $i$.  That is, there are permutations $\delta_1, \ldots, \delta_n \in \fS_n$ such that
\[
\delta_i(x^{(i)} - \varpi_{i-1}) \preceq \lambda.
\]
Most of the work in this section is devoted to showing that each $\delta_i$ can be chosen in such a way that it differs ``minimally'' from its neighbors $\delta_{i-1}$ and $\delta_{i+1}$.  For a precise statement, see Lemma~\ref{lem:to-perm}.  As an application, we will prove a result about alcoves (Theorem~\ref{thm:convolve-comb}) that is a combinatorial precursor to the convolution geometry that will be studied in Section~\ref{sec:conv}.

\subsection{A lemma on dominant coweights}

The following fact is undoubtedly well known, but we include a proof as we were unable to find a reference for it.

\begin{lem}\label{lem:dom-subtract}
Let $\mu, \lambda \in \bX^+$ be dominant coweights such that $\lambda = \mu + \varpi_t$, where $1 \le t \le n-1$.  If $\lambda' \in \bX^+$ is another dominant coweight satisfying $\lambda' \preceq \lambda$, then $\dom(\lambda' - \varpi_t) \preceq \mu$.
\end{lem}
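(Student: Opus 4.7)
The plan is to pass to conjugate partitions. First, by simultaneously adding a constant coweight $(N,\ldots,N)$ for $N$ large to $\lambda$, $\lambda'$, and $\mu$---an operation that preserves $\preceq$ and commutes with $\dom$---I may assume all three are honest partitions with non-negative entries. Set $\nu := \dom(\lambda' - \varpi_t)$ and write $\xi^c$ for the transpose partition, $\xi^c_j = |\{i : \xi_i \ge j\}|$. The classical fact that conjugation reverses partition dominance reduces the claim $\nu \preceq \mu$ to the opposite inequality $\mu^c \preceq \nu^c$.

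Next I derive explicit formulas for $\mu^c$ and $\nu^c$. The dominance of $\mu$ forces the strict inequality $\lambda_t > \lambda_{t+1}$, equivalently $\lambda^c_{\lambda_t} = t$ and $\lambda^c_{\lambda_t+1} < t$. A direct calculation from $\mu = \lambda - \varpi_t$ shows that $\mu^c$ is obtained from $\lambda^c$ by deleting the (unique) entry of value exactly $t$: namely $\mu^c_j = \lambda^c_j$ for $j < \lambda_t$ and $\mu^c_j = \lambda^c_{j+1}$ for $j \ge \lambda_t$. An analogous computation for $\nu^c$, starting from $\nu^c_j = |\{i \le t : \lambda'_i \ge j+1\}| + |\{i > t : \lambda'_i \ge j\}|$ and letting $j^*$ be the largest $j$ with $(\lambda')^c_{j^*} \ge t$ (which exists since $(\lambda')^c_1 \ge \lambda^c_1 \ge t$), gives $\nu^c_j = (\lambda')^c_j$ for $j < j^*$, $\nu^c_{j^*} = (\lambda')^c_{j^*} + (\lambda')^c_{j^*+1} - t$, and $\nu^c_j = (\lambda')^c_{j+1}$ for $j > j^*$. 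Summing, both formulas telescope cleanly: $\sum_{j=1}^k \mu^c_j$ equals $\sum_{j=1}^k \lambda^c_j$ if $k < \lambda_t$ and equals $\sum_{j=1}^{k+1} \lambda^c_j - t$ if $k \ge \lambda_t$, with the analogous formula for $\sum_{j=1}^k \nu^c_j$ in terms of $(\lambda')^c$ and $j^*$.

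Finally, I would verify $\sum_{j=1}^k \mu^c_j \le \sum_{j=1}^k \nu^c_j$ for every $k$ by splitting into the four cases according to how $k$ compares to each of $\lambda_t$ and $j^*$. In the two \emph{aligned} cases, the inequality reduces directly to $\sum_{j=1}^m \lambda^c_j \le \sum_{j=1}^m (\lambda')^c_j$ for $m = k$ or $m = k+1$, which is exactly the conjugate-dominance form of the hypothesis $\lambda' \preceq \lambda$. In the two \emph{misaligned} cases, one additionally uses the boundary values $\lambda^c_{\lambda_t} = t$ and $\lambda^c_{\lambda_t+1} < t$ to reconcile the single-index shift between $S_k$ and $S_{k+1}$ with the $-t$ correction.

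I expect the main obstacle to be recognizing and verifying the explicit conjugate formulas---in particular, identifying the transitional index $j^*$ for $\nu^c$ and checking that the strict drop of $\lambda^c$ across index $\lambda_t$ is exactly enough to make the misaligned cases work. Once the formulas are in hand, the four-case dominance check is routine.
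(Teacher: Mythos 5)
Your proof is correct, and it takes a genuinely different route from the paper's. The paper works directly with the coweights: it first observes $\lambda'-\varpi_t\preceq\mu$, disposes of the case $\lambda'_t>\lambda'_{t+1}$ immediately (where nothing needs to be re-sorted), and otherwise writes out $\dom(\lambda'-\varpi_t)$ explicitly in terms of the plateau of $\lambda'$ at level $\lambda'_t$ (with endpoints $a,b$). It then introduces $f_i=\lambda_i-\lambda'_i$ and proves the required partial-sum bounds via a case analysis on $k$ that, in the interesting range $a\le k\le b$, further splits into four subcases depending on the sign of $f_k$ and on whether $k\le t$. Your proof instead conjugates everything: the operation ``subtract $\varpi_t$ (then re-sort)'' becomes the much simpler operation of deleting one entry from the conjugate at the transitional index, and dominance flips direction. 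The verification then collapses to a clean comparison of $\sum\mu^c_j$ against $\sum\nu^c_j$, telescoping to $\sum\lambda^c_j\le\sum(\lambda')^c_j$ in the two ``aligned'' cases and using the boundary inequalities $\lambda^c_{k+1}\ge t$ (for $k<\lambda_t$) and $\lambda^c_{k+1}\le t$ (for $k\ge\lambda_t$) in the two ``misaligned'' ones. I checked that these four cases do close: in case $k<\lambda_t,\ k\ge j^*$, one rewrites the target as $\sum_{j\le k+1}(\lambda^c_j-(\lambda')^c_j)\le\lambda^c_{k+1}-t$ and uses $\lambda^c_{k+1}\ge t$; in case $k\ge\lambda_t,\ k<j^*$, one uses $\lambda^c_{k+1}\le t$ together with the $k$-th conjugate-dominance inequality. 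Two small points worth tightening if you write this up: the shift by a constant must be large enough to make not only the entries of $\lambda,\lambda',\mu$ nonnegative but also $\lambda'_t-1\ge0$ (so that $\nu$ is an honest partition); and the ``unique entry of value exactly $t$'' in $\lambda^c$ should read ``the entry at position $\lambda_t$, which equals $t$'' --- $\lambda^c$ may have several entries equal to $t$ (one for each $j$ with $\lambda_{t+1}<j\le\lambda_t$), though deleting any of them gives the same multiset. Your approach trades the paper's explicit bookkeeping in the original coordinates for the cleaner structural fact that conjugation turns $-\varpi_t$ into a single-entry deletion; it is arguably a more illuminating proof, at the cost of invoking conjugate-dominance duality.
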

\begin{proof}
It is immediate from the assumptions that
\begin{equation}\label{eqn:ds-easy}
\lambda' - \varpi_t \preceq \lambda - \varpi_t = \mu.
\end{equation}
If $\lambda'_t > \lambda'_{t+1}$, then $\lambda' - \varpi_t$ is already dominant, so there is nothing to prove.

For the remainder of the proof, we assume that $\lambda'_t = \lambda'_{t+1}$.  Let $m = \lambda'_t$.  Let $a$ be the smallest integer such that $\lambda'_a = m$, and let $b$ be the largest integer such that $\lambda'_b = m$.  Thus, $a \le t < t+1 \le b$, and
\[
\lambda' = (\lambda'_1, \lambda'_2, \ldots , \lambda'_{a-1}, m, m , \ldots, m , \lambda'_{b+1} , \lambda'_{b+2} , \ldots , \lambda'_n)
\]
with
\[
\lambda'_1 \ge \lambda'_2 \ge \cdots \ge \lambda'_{a-1} >  m > \lambda'_{b+1} \ge \lambda'_{b+2} \ge \cdots \ge \lambda'_n.
\]
We have
\[
\lambda' - \varpi_t =
(\lambda'_1-1,  \ldots , \lambda'_{a-1}-1, \underbrace{m-1, \ldots, m-1}_{\text{$t-a+1$ terms}}, \underbrace{m,\ldots,m}_{\text{$b-t$ terms}}, \lambda'_{b+1},\ldots, \lambda'_n),
\]
and hence
\[
\dom(\lambda' - \varpi_t) = 
(\lambda'_1-1,  \ldots , \lambda'_{a-1}-1, \underbrace{m,\ldots,m}_{\text{$b-t$ terms}}, \underbrace{m-1, \ldots, m-1}_{\text{$t-a+1$ terms}},  \lambda'_{b+1},\ldots, \lambda'_n).
\]
Our goal is to prove that $\dom(\lambda' - \varpi_t) \preceq \mu$, or
\begin{equation}\label{eqn:ds-goal}
\sum_{i=1}^k \dom(\lambda' - \varpi_t)_i \le \sum_{i=1}^k \mu_i \qquad\text{for $1 \le k \le n$,}
\end{equation} with equality when $k=n$. The equality case follows from $|\dom(\lambda'-\varpi_t)|=|\lambda'-\varpi_t|= |\lambda-\varpi_t|=|\mu|$.
Before proving the inequalities, we set up some additional notation.  Since $\lambda = \mu + \varpi_t$, we have
\[
\lambda_i = 
\begin{cases}
\mu_i+1 & \text{if $1 \le i \le t$,} \\
\mu_i & \text{if $t+1 \le i \le n$.}
\end{cases}
\]
Since $\mu$ is dominant, this implies that
\begin{equation}\label{eqn:ds-lambda}
\lambda_1 \ge \cdots \ge \lambda_t > \lambda_{t+1} \ge \cdots \ge \lambda_n.
\end{equation}

For $1 \le i \le n$, let $f_i = \lambda_i - \lambda'_i$.  Since $\lambda' \preceq \lambda$, we have
\begin{equation}\label{eqn:ds-fdom}
\sum_{i=1}^k f_i \ge 0
\qquad\text{for $1 \le k \le n$.}
\end{equation}
Since $\lambda'_a = \lambda'_{a+1} = \cdots = \lambda'_b$, it follows from~\eqref{eqn:ds-lambda} that
\begin{equation}\label{eqn:ds-fin}
f_a \ge f_{a+1} \ge \cdots \ge f_t > f_{t+1} \ge f_{t+2} \ge \cdots \ge f_b.
\end{equation}

We are now ready to prove~\eqref{eqn:ds-goal}.  We break up the problem into cases as follows.

\textit{Case 1. $1 \le k \le a-1$.} In this case,~\eqref{eqn:ds-goal} simplifies to
\[
\sum_{i=1}^k (\lambda'_i - 1) \le \sum_{i=1}^k (\lambda_i - 1)
\qquad\text{or, equivalently,}\qquad
\sum_{i=1}^k \lambda'_i - k \le \sum_{i=1}^k \lambda_i - k,
\]
and this holds because $\lambda' \preceq \lambda$.

\textit{Case 2. $a \le k \le b$.}
Assuming $1 \le i \le b$, we have
\[
\dom(\lambda' - \varpi_t)_i =
\begin{cases}
\lambda'_i -1 & \text{if $1 \le i \le a-1$ or $b-t+a \le i \le b$,} \\
\lambda'_i & \text{if $a \le i \le b-t+a-1$,}
\end{cases}
\]
so the left-hand side of~\eqref{eqn:ds-goal} simplifies to
\begin{multline*}
\sum_{i=1}^k \dom(\lambda' - \varpi_t)_i = \sum_{i=1}^k (\lambda'_i - 1) + |\{ i \mid \text{$1 \le i \le k$ and $a \le i \le b-t+a-1$} \}| \\
= \sum_{i=1}^k \lambda'_i - k + \min \{k, b-t+a-1\} - a + 1 
= \sum_{i=1}^k \lambda'_i + \min \{-a+1, b-t-k\}.
\end{multline*}
Therefore,~\eqref{eqn:ds-goal} simplifies to
\[
\sum_{i=1}^k \lambda'_i + \min \{-a+1, b-t-k\} \le \sum_{i=1}^k \mu_i = \sum_{i=1}^k \lambda_i - \min\{ t, k\}.
\]
Since
\begin{multline*}
\min\{t,k\} + \min \{-a+1, b-t-k\} = \min \{ t-a+1, k-a+1, b-t, b-k\},
\end{multline*}
we see that~\eqref{eqn:ds-goal} is equivalent to
\begin{equation}\label{eqn:ds-goal2}
\sum_{i=1}^k f_i \ge \min \{ t-a+1, k-a+1, b-t, b-k\}.
\end{equation}
To prove this, we consider various subcases.

\textit{Case 2a. $k \le t$ and $f_k \ge 1$.}
Using~\eqref{eqn:ds-fdom} and~\eqref{eqn:ds-fin}, we have
\[
\sum_{i=1}^k f_i = \sum_{i=1}^{a-1} f_i + f_a + \cdots + f_k 
\ge \underbrace{\sum_{i=1}^{a-1} f_i}_{{}\ge 0}{} + (k-a+1)\underbrace{f_k}_{{}\ge 1}{} \ge k - a+1.
\]

\textit{Case 2b. $k \le t$ and $f_k \le 0$.}
By~\eqref{eqn:ds-fin}, we have $0 \ge f_{k+1} \ge \cdots \ge f_t$, and $0 > f_{t+1}$.  Since $\sum_{i=1}^b f_i \ge 0$ by~\eqref{eqn:ds-fdom}, we have
\begin{multline*}
\sum_{i=1}^k f_i \ge - \sum_{i=k+1}^b f_i
= - \sum_{i=k+1}^t f_i - f_{t+1} - f_{t+2} - \cdots - f_b \\
\ge \underbrace{- \sum_{i=k+1}^t f_i}_{{}\ge 0}{} + (b-t)\underbrace{(-f_{t+1})}_{{}\ge 1} \ge b-t.
\end{multline*}

\textit{Case 2c. $k > t$ and $f_k \ge 0$.}
By~\eqref{eqn:ds-fin}, we have $f_a \ge \cdots \ge f_t > 0$ and $f_{t+1} \ge \cdots \ge f_k \ge 0$, so
\[
\sum_{i=1}^k f_i = \underbrace{\sum_{i=1}^{a-1} f_i}_{{}\ge 0}{} + \underbrace{\sum_{i=a}^t \underbrace{f_i}_{{}\ge 1}}_{{}\ge t-a+1}{} + \underbrace{\sum_{i=t+1}^k f_i}_{{}\ge 0}{} \ge t - a+1.
\]

\textit{Case 2d. $k > t$ and $f_k \le -1$.}
We have $\sum_{i=1}^b f_i \ge 0$, and hence
\[
\sum_{i=1}^k f_i \ge \sum_{i=k+1}^b \underbrace{(-f_i)}_{{}\ge 1} \ge b-k.
\]

\textit{Case 3. $b+1 \le k \le n$.}
In this case, we have
\[
\sum_{i=1}^k \dom(\lambda' - \varpi_t)_i = \sum_{i=1}^k (\lambda' - \varpi_t)_i,
\]
so~\eqref{eqn:ds-goal} holds as a consequence of~\eqref{eqn:ds-easy}.
\end{proof}

\subsection{Rotation of alcoves}

Let $\cox \in \fS_n$ be the $n$-cycle $(1\,2\, \cdots\, n)$, i.e.,
\[
\cox(i) =
\begin{cases}
i+1 & \text{if $1 \le i \le n-1$,} \\
1 & \text{if $i = n$.}
\end{cases}
\]
Next, let $x = (x^{(1)}, \ldots, x^{(n)})$ be an alcove.  We define a new sequence of coweights $\rot(x) = (\rot(x)^{(1)}, \rot(x)^{(2)}, \ldots, \rot(x)^{(n)})$ by the formula
\[
\rot(x)^{(i)} = 
\begin{cases}
\cox^{-1}(x^{(i+1)} - \varpi_1) & \text{if $1 \le i \le n-1$,} \\
\cox^{-1}(x^{(1)}) + \varpi_{n-1} & \text{if $i = n$.}
\end{cases}
\]
The next two lemmas follow from the definitions; we omit their proofs.

\begin{lem}\label{lem:rot-basic}
Let $x$ be an alcove.
\begin{enumerate}
\item The sequence $\rot(x)$ is an alcove.
\item If $x$ is $\lambda$-permissible for some $\lambda \in \bX^+$, then $\rot(x)$ is also $\lambda$-permissible.
\item For $1 \le j \le n$, we have
\[
\rot^j(x)^{(i)} = 
\begin{cases}
\cox^{-j}(x^{(i+j)} - \varpi_j) & \text{if $1 \le i \le n-j$,} \\
\cox^{-j}(x^{(i-n+j)} - \varpi_{i-n+j-1}) + \varpi_{i-1} & \text{if $n-j < i \le n$.}
\end{cases}
\]
\item We have $\rot^n(x) = x$.
\end{enumerate}
\end{lem}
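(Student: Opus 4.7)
The plan is to verify parts (1)--(4) by direct computation. The essential input is the action of $\cox^{-1}$ on standard basis vectors and fundamental coweights: $\cox^{-1}(e_j) = e_{j-1}$ for $j \ge 2$ and $\cox^{-1}(e_1) = e_n$, from which $\cox^{-1}(\varpi_j) = e_n + e_1 + \cdots + e_{j-1}$. In particular $\cox^{-1}(\varpi_1) = e_n$, $\cox^{-1}(\varpi_n) = \varpi_n$, and $\cox^{-1}(\varpi_j) = \varpi_{j-1} + e_n$. Everything else in the lemma reduces to these identities combined with the defining properties of an alcove.

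For part (1), I would verify both the coordinate-wise inequalities and the size conditions. For $1 \le i \le n-2$, the inequality $\rot(x)^{(i)} \le_\co \rot(x)^{(i+1)}$ is immediate because $\cox^{-1}$ is a coordinate permutation and so preserves $\le_\co$, while we already have $x^{(i+1)} \le_\co x^{(i+2)}$. The boundary case $i=n-1$ reduces, after using $\cox^{-1}(\varpi_n - \varpi_1) = \varpi_{n-1}$, to the alcove condition $x^{(n)} \le_\co x^{(1)} + \varpi_n$; the wraparound inequality $\rot(x)^{(n)} \le_\co \rot(x)^{(1)} + \varpi_n$ similarly reduces to $x^{(1)} \le_\co x^{(2)}$ because $\cox^{-1}(\varpi_1) = e_n$ makes the shifts cancel. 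The size conditions then follow from $|\cox^{-1}(v)| = |v|$ and the size conditions on $x$.

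For part (2), the key identity is
\[
\rot(x)^{(i)} - \varpi_{i-1} = \cox^{-1}(x^{(i+1)} - \varpi_i) \qquad \text{for $1 \le i \le n-1$,}
\]
a direct consequence of $\cox^{-1}(\varpi_i) = \varpi_{i-1} + e_n$. For $i = n$ the definition directly yields $\rot(x)^{(n)} - \varpi_{n-1} = \cox^{-1}(x^{(1)}) = \cox^{-1}(x^{(1)} - \varpi_0)$. Since $\dom$ is $\fS_n$-invariant, $\dom(\rot(x)^{(i)} - \varpi_{i-1})$ equals $\dom(x^{(j)} - \varpi_{j-1})$ for $j = i+1$ when $i \le n-1$ and $j = 1$ when $i = n$; in either case it is $\preceq \lambda$ by the $\lambda$-permissibility of $x$.

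Part (3) is an induction on $j$, the base case $j = 1$ being the definition of $\rot$. For the inductive step, I apply $\rot$ to the formula for $\rot^j(x)$ and split according to whether the incremented index falls in the range $\le n-j$ or $> n-j$. The required simplifications are driven by the identity $\cox^{-(j+1)}(\varpi_{j+1}) = \cox^{-(j+1)}(\varpi_j) + e_n$ (equivalently $\cox^{j+1}(e_n) = e_{j+1}$), which lets me absorb the $-\varpi_1$ from the definition of $\rot$ into the shift from $\varpi_j$ to $\varpi_{j+1}$. The main obstacle, such as it is, lies in the bookkeeping required to verify that the two branches of the piecewise formula glue consistently at $i = n-j$; no new ideas are needed. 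Part (4) is then immediate from (3) specialized to $j = n$: the first branch is vacuous, and the second yields $\cox^{-n}(x^{(i)} - \varpi_{i-1}) + \varpi_{i-1} = x^{(i)}$ since $\cox^{-n}$ is the identity.
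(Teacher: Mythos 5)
Your proof is correct. The paper deliberately omits a proof of this lemma, stating only that it ``follows from the definitions,'' and your direct verification supplies exactly what was left to the reader: the identities $\cox^{-1}(\varpi_j) = \varpi_{j-1} + e_n$ and $\cox^{j}(e_n) = e_{j}$ (for $1 \le j \le n$) drive all four parts, and your handling of the boundary cases ($i = n-1$ and the wraparound inequality in part (1), the index split at $i = n-j$ in part (3)) is accurate.
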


\begin{lem}
For any alcove $x$, we have $\sP(\rot(x)) = \cox^{-1} \sP(x) \cox$.
\end{lem}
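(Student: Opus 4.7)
The plan is a direct case-by-case computation using the definition of $\sP$ on the alcove $\rot(x)$. Recall that $\sP(y)(i)$ is characterized by $y^{(i+1)} - y^{(i)} = e_{\sP(y)(i)}$, with the convention $y^{(n+1)} = y^{(1)} + \varpi_n$. So the goal is to verify, for each $i \in \{1, \ldots, n\}$, that
\[
\rot(x)^{(i+1)} - \rot(x)^{(i)} = e_{\cox^{-1}\sP(x)\cox(i)}.
\]

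First I would treat the ``generic'' range $1 \le i \le n-2$, where both $\rot(x)^{(i)}$ and $\rot(x)^{(i+1)}$ are given by the top case of the definition. There the $-\varpi_1$ terms cancel, and the difference reduces to $\cox^{-1}(x^{(i+2)} - x^{(i+1)}) = \cox^{-1}(e_{\sP(x)(i+1)}) = e_{\cox^{-1}\sP(x)(i+1)}$. Since $\cox(i) = i+1$ for $i \le n-1$, this is exactly $e_{\cox^{-1}\sP(x)\cox(i)}$.

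The two boundary cases $i = n-1$ and $i = n$ are the only places that require care, since they involve the second clause of the definition of $\rot(x)$ and the wrap-around convention $\rot(x)^{(n+1)} = \rot(x)^{(1)} + \varpi_n$. For $i = n-1$, the difference is $\cox^{-1}(x^{(1)} - x^{(n)} + \varpi_1) + \varpi_{n-1}$; using $x^{(1)} - x^{(n)} = e_{\sP(x)(n)} - \varpi_n$ and the identity $\cox^{-1}(\varpi_n - \varpi_1) = \varpi_{n-1}$ (which follows from $\cox^{-1}$ sending $e_j$ to $e_{j-1}$ cyclically) produces the desired $e_{\cox^{-1}\sP(x)(n)}$, matching $e_{\cox^{-1}\sP(x)\cox(n-1)}$. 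For $i = n$, the difference is $\cox^{-1}(x^{(2)} - x^{(1)} - \varpi_1) + \varpi_n - \varpi_{n-1}$; here $x^{(2)} - x^{(1)} = e_{\sP(x)(1)}$, $\cox^{-1}(e_1) = e_n$, and $\varpi_n - \varpi_{n-1} = e_n$, so the two $e_n$ contributions cancel and what remains is $e_{\cox^{-1}\sP(x)(1)} = e_{\cox^{-1}\sP(x)\cox(n)}$.

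No step is really an obstacle; the only potential source of error is keeping track of the cyclic conventions for $\cox$ and the wrap-around $\varpi_n - \varpi_1$ adjustments in the two boundary cases. Once those three computations are recorded, the characterization of $\sP(\rot(x))$ via successive differences yields $\sP(\rot(x))(i) = \cox^{-1}\sP(x)\cox(i)$ for all $i$, completing the proof.
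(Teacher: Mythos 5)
Your proof is correct; the paper explicitly omits the argument for this lemma, and your direct verification of $\rot(x)^{(i+1)} - \rot(x)^{(i)} = e_{\cox^{-1}\sP(x)\cox(i)}$ (splitting into the generic range $1 \le i \le n-2$ and the two boundary cases, where the $\varpi$-bookkeeping is checked) is exactly the computation that the phrase ``follows from the definitions'' is pointing at.
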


\subsection{Partial orders on indices}

Let $x$ be an alcove.  We define a partial order $\trianglelefteq_x$ on the set $\{1, \ldots, n\}$ by the rule shown in Table~\ref{tab:po-defn}.

\begin{table}
For $i, j \in \{1, \ldots, n\}$, we declare that $i \trianglelefteq_x j$ if one of the following holds:
\[
\begin{array}{lll@{\text{and }}ll@{\text{and }}ll@{\text{and }}l}
\hline
(1) & x^{(1)}_i > x^{(1)}_j \\
(2) & x^{(1)\strut}_i = x^{(1)}_j && \sP(x)^{-1}(i) < i && \sP(x)^{-1}(j) < j && i \ge j \\
(3) & x^{(1)\strut}_i = x^{(1)}_j && \sP(x)^{-1}(i) < i && \sP(x)^{-1}(j) \ge j \\
(4) & x^{(1)\strut}_i = x^{(1)}_j && \sP(x)^{-1}(i) = i && \sP(x)^{-1}(j) = j && i = j \\
(5) & x^{(1)\strut}_i = x^{(1)}_j && \sP(x)^{-1}(i) = i && \sP(x)^{-1}(j) > j \\
(6) & x^{(1)\strut}_i = x^{(1)}_j && \sP(x)^{-1}(i) > i && \sP(x)^{-1}(j) > j && i \ge j \\
\hline
\end{array}
\]
\caption{Definition of $\trianglelefteq_x$}\label{tab:po-defn}
\end{table}

\begin{prop}\label{prop:po-compare}
Let $x$ be an alcove, and let $i, j \in \{1, \ldots,n\}$.
\begin{enumerate}
\item If $\sP(x)(1) = 1$, then $i \trianglelefteq_x j$ if and only if $\cox^{-1}(i) \trianglelefteq_{\rot(x)} \cox^{-1}(j)$.\label{it:po-triv}
\item If $i, j \ne \sP(x)(1)$, then $i \trianglelefteq_x j$ if and only if $\cox^{-1}(i) \trianglelefteq_{\rot(x)} \cox^{-1}(j)$.\label{it:po-iff}
\item If $\sP(x)(1) \trianglelefteq_x j$, then $\cox^{-1}(\sP(x)(1)) \trianglelefteq_{\rot(x)} \cox^{-1}(j)$.\label{it:po-move}
\end{enumerate}
\end{prop}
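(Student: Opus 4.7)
The plan is to decode the statement by tracking two quantities for each $i \in \{1,\ldots,n\}$ that determine the partial order $\trianglelefteq_x$: the coordinate $x^{(1)}_i$ and the ``type'' of $i$ (L, F, or R, according to whether $\sP(x)^{-1}(i)$ is less than, equal to, or greater than $i$). The key technical inputs are a pair of explicit formulas. First, setting $s = \sP(x)(1)$, one computes directly from the definition of $\rot$ that
\[
\rot(x)^{(1)}_{\cox^{-1}(k)} = x^{(1)}_k + [k = s] - [k = 1],
\]
so coordinates are preserved \emph{except} that, when $s \ne 1$, they decrease by $1$ at $\cox^{-1}(1) = n$ and increase by $1$ at $\cox^{-1}(s) = s-1$. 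Second, using the identity $\sP(\rot(x)) = \cox^{-1}\sP(x)\cox$ together with the fact that $\cox^{-1}$ preserves the ordinary order on $\{2,\ldots,n\}$ but sends $1$ to $n$, one checks that the type of $\cox^{-1}(k)$ under $\rot(x)$ equals the type of $k$ under $x$, except in the same two special indices: when $s \ne 1$, the index $1$ is R for $x$ but $n$ is L for $\rot(x)$, and the index $s$ is L for $x$ but $s-1$ is R for $\rot(x)$.

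With these observations in hand, Part~(1) is nearly immediate: when $s=1$ the two exceptional indices collapse into the single index $1$, which is F both before and after, so coordinates and types are fully preserved. The only use of the numerical comparison ``$i \ge j$'' in Table~\ref{tab:po-defn} is in cases~(2) and~(6), but those cases force both $i,j$ to be of the same non-F type, which rules out index $1$; the remaining $i,j \ge 2$ are mapped order-preservingly by $\cox^{-1}$.

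For Part~(2), the hypothesis $i,j \ne s$ means the only exceptional index that might still appear is $1$ (when $s \ne 1$). I split into sub-cases according to whether $i = 1$, $j = 1$, or neither. In the generic sub-case the verification runs as in Part~(1). When exactly one of $i, j$ equals $1$, I run through a trichotomy in $x^{(1)}_1$ versus $x^{(1)}_j$, verifying case by case that the $-1$ shift at coordinate $n$ and the R-to-L type flip at index $n$ conspire so that Cases~(1)--(6) on the two sides of the equivalence match up.

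Part~(3) is the subtle case, because now the relevant index $i = s$ is itself exceptional: $s$ is L for $x$ but $s-1 = \cox^{-1}(s)$ is R for $\rot(x)$, and the coordinate shifts upward from $x^{(1)}_s$ to $x^{(1)}_s + 1$. The point is that this $+1$ bump compensates for the L-to-R type flip in just the right direction: since a \emph{larger} value of $x^{(1)}$ makes an index smaller under $\trianglelefteq$, whenever the hypothesis $s \trianglelefteq_x j$ holds (after a case-check on $j$ modeled on Part~(2)) we obtain $\rot(x)^{(1)}_{s-1} > \rot(x)^{(1)}_{\cox^{-1}(j)}$, which places us in Case~(1) of Table~\ref{tab:po-defn} for $\rot(x)$ and yields the conclusion. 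This asymmetry is also why the statement is only an implication, not a bi-conditional: the coordinate bump can create new $\trianglelefteq_{\rot(x)}$-relations not coming from $\trianglelefteq_x$. The main obstacle throughout is simply the care required in these case splits and in the sign bookkeeping for the exceptional indices $1$ and $s$; no single step is conceptually deep.
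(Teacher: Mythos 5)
Your proof is correct and follows essentially the same path as the paper's: compute the coordinate shift $\rot(x)^{(1)}_{\cox^{-1}(k)} = x^{(1)}_k + [k=s] - [k=1]$, track how the sign of $\sP(x)^{-1}(k) - k$ transforms under $\cox^{-1}$ (your ``type'' bookkeeping), and then run the same case analysis on the two exceptional indices $1$ and $s = \sP(x)(1)$. The L/F/R terminology is a slightly cleaner packaging of the same observations the paper makes in equations (4.1)--(4.3) and the eight-case analysis, but it is not a different argument.
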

\begin{proof}
\eqref{it:po-triv} Assume first that $\sP(x)(1) = 1$.   In this case, $x^{(2)} = x^{(1)} + \varpi_1$, so it follows from the definition that $\rot(x)^{(1)} = \cox^{-1}(x^{(1)})$, so for $i\in\{1,\dots,n\}$, we have
\begin{equation}\label{eqn:po1-rot}
\rot(x)^{(1)}_{\cox^{-1}(i)} = x^{(1)}_{i}.
\end{equation}
Next, we claim that for all $i$, we have
\begin{equation}\label{eqn:po1-sp}
\sP(\rot(x))^{-1}(\cox^{-1}(i)) < \cox^{-1}(i)
\qquad\text{if and only if}\qquad
\sP(x)^{-1}(i) < i,
\end{equation}
and likewise with ``$<$'' replaced by ``$=$'' or ``$>$.''  Since $\sP(\rot(x))^{-1}(\cox^{-1}(i)) = \cox^{-1}(\sP(x)^{-1}(i))$, our claim is equivalent to the assertion that
\[
\cox^{-1}(\sP(x)^{-1}(i)) < \cox^{-1}(i)
\qquad\text{if and only if}\qquad
\sP(x)^{-1}(i) < i.
\]
If $2 \le i \le n$, this assertion is obvious.  If $i = 1$, then the assumption that $\sP(x)(1) = 1$ means that both inequalities above are false, so the ``if and only if'' assertion is true.  Similar reasoning applies to the ``$=$'' and ``$>$'' cases.

Combining~\eqref{eqn:po1-rot},~\eqref{eqn:po1-sp}, and the definition of $\trianglelefteq_x$, we see that conditions (1), (3), (4), or (5) of Table~\ref{tab:po-defn} apply to $i \trianglelefteq_x j$ if and only if the same condition applies to $\cox^{-1}(i) \trianglelefteq_{\rot(x)} \cox^{-1}(j)$.  Conditions (2) and (6) are trickier because they involve the condition $i \ge j$.  However, these conditions also require $\sP(x)^{-1}(i) \ne i$ and $\sP(x)^{-1}(j) \ne j$, which means that $i, j \ne 1$.  For $i,j \in \{2, \ldots, n\}$, we have $i \ge j$ if and only if $\cox^{-1}(i) \ge \cox^{-1}(j)$.  Thus, in all cases, $i \trianglelefteq_x j$ if and only if $\cox^{-1}(i) \trianglelefteq_{\rot(x)} \cox^{-1}(j)$.

\eqref{it:po-iff}
Let $r = \sP(x)(1)$.  If $r = 1$, then this part of the proposition is subsumed by part~\eqref{it:po-triv}.  Assume for the remainder of this part of the proof that $r > 1$. Then
\begin{equation}\label{eqn:po-nontriv}
\rot(x)^{(1)}_{\cox^{-1}(i)} =
\begin{cases}
x^{(1)}_i & \text{if $i \ne 1,r$,} \\
x^{(1)}_1 -1 &\text{if $i = 1$,} \\
x^{(1)}_r + 1 & \text{if $i = r$.}
\end{cases}
\end{equation}
We consider various cases as follows.

\textit{Case 1. $i, j \ne 1, r$.}  Under this assumption, we have $\rot(x)^{(1)}_{\cox^{-1}(i)} = x^{(1)}_i$ and $\rot(x)^{(1)}_{\cox^{-1}(j)} = x^{(1)}_j$. The reasoning in the proof of part~\eqref{it:po-triv} applies verbatim to show that $i \trianglelefteq_x j$ if and only if $\cox^{-1}(i) \trianglelefteq_{\rot(x)} \cox^{-1}(j)$.

\textit{Case 2.  $i = 1$, $j \ne 1, r$, and $x^{(1)}_1 = x^{(1)}_j$.}  Our assumptions imply that $\sP(x)^{-1}(1) \ne 1$, and hence $\sP(x)^{-1}(1) > 1$, and also that $1 < j$.  So $i = 1$ and $j$ cannot satisfy any of the conditions in Table~\ref{tab:po-defn}; we necessarily have $1 \ntrianglelefteq_x j$.  On the other hand, we have
\[
\rot(x)^{(1)}_n = x^{(1)}_1 - 1 < x^{(1)}_j = \rot(x)^{(1)}_{\cox^{-1}(j)},
\]
so we also necessarily have $n \ntrianglelefteq_{\rot(x)} \cox^{-1}(j)$, as desired.

\textit{Case 3.   $i = 1$, $j \ne 1, r$, and $x^{(1)}_1 < x^{(1)}_j$.}
In this case, we again have $1 \ntrianglelefteq_x j$ and $n \ntrianglelefteq_{\rot(x)} \cox^{-1}(j)$.

\textit{Case 4.  $i = 1$, $j \ne 1, r$, and $x^{(1)}_1 > x^{(1)}_j$.}
In this case, we necessarily have $1 \trianglelefteq_x j$, so we must prove that $n \trianglelefteq_{\rot(x)} \cox^{-1}(j)$.  If $x^{(1)}_1 - x^{(1)}_j \ge 2$, then 
\[
\rot(x)^{(1)}_n = x^{(1)}_1 - 1 > x^{(1)}_j = \rot(x)^{(1)}_{\cox^{-1}(j)},
\]
and we are done.  On the other hand, suppose $x^{(1)}_1 - x^{(1)}_j = 1$, so that
\[
\rot(x)^{(1)}_n = \rot(x)^{(1)}_{\cox^{-1}(j)}.
\]
We have $\sP(\rot(x))^{-1}(n) \ne n$ and hence $\sP(\rot(x))^{-1}(n) < n$.  We obviously have $n > \cox^{-1}(j)$, so $n\trianglelefteq_{\rot(x)} \cox^{-1}(j)$ holds by either condition (2) or (3) in Table~\ref{tab:po-defn}.

\textit{Case 5. $i\ne 1,r$, $j = 1$, and $x^{(1)}_i = x^{(1)}_1$.}
Our assumptions imply that $\sP(x)^{-1}(1) > 1$ and $i > 1$.  Depending on how $\sP(x)^{-1}(i)$ compares with $i$, we see that one of conditions (3), (5), or (6) in Table~\ref{tab:po-defn} must hold, and we necessarily have $i \trianglelefteq_x j$.   On the other hand, we have
\[
\rot(x)^{(1)}_{\cox^{-1}(i)} = x^{(1)}_i > x^{(1)}_1 - 1 = \rot(x)^{(1)}_n,
\]
so by condition (1) of Table~\ref{tab:po-defn}, we necessarily have $\cox^{-1}(i) \trianglelefteq_{\rot(x)} n$.

\textit{Case 6. $i\ne 1,r$, $j = 1$, and $x^{(1)}_i > x^{(1)}_1$.}
In this case, we again have $i \trianglelefteq_x j$ and  $\cox^{-1}(i) \trianglelefteq_{\rot(x)} n$.

\textit{Case 7. $i\ne 1,r$, $j = 1$, and $x^{(1)}_i < x^{(1)}_1$.}
In this case, we have $i \ntrianglelefteq_x 1$, so we must prove that $\cox^{-1}(i) \ntrianglelefteq_{\rot(x)} n$.  If $x^{(1)}_1 - x^{(1)}_i \ge 2$, then we have
\[
\rot(x)^{(1)}_{\cox^{-1}(i)} = x^{(1)}_i < x^{(1)}_1 - 1 = \rot(x)^{(1)}_n,
\]
so $\cox^{-1}(i) \ntrianglelefteq_{\rot(x)} n$ as desired.  On the other hand, suppose $x^{(1)}_1 - x^{(1)}_i = 1$, so that $\rot(x)^{(1)}_{\cox^{-1}(i)} = \rot(x)^{(1)}_n$. Since $\sP(x)^{-1}(1) \ne 1$, we have $\sP(\rot(x))^{-1}(n) \ne n$, and hence $\sP(\rot(x))^{-1}(n) < n$.  We also have $\cox^{-1}(i) < n$, so none of conditions (2)--(6) in Table~\ref{tab:po-defn} apply, and we conclude that $\cox^{-1}(i) \ntrianglelefteq_{\rot(x)} n$.

\textit{Case 8. $i = j$.}
This case is trivial.

\textit{Conclusion.} Cases 1--8 cover all possible situations in part~\eqref{it:po-iff}, so this part of the proposition is now proved.

\eqref{it:po-move} We again let $r = \sP(x)(1)$, and we assume that $r > 1$.  The formula in~\eqref{eqn:po-nontriv} remains valid.  This time, we are only proving one implication, not an ``if and only if'' statement.  Assume $j$ is such that $r \trianglelefteq_x j$.  We may further assume without loss of generality that $j \ne r$.  From Table~\ref{tab:po-defn}, we must have $x^{(1)}_r \ge x^{(1)}_j$.  Since
\[
\rot(x)^{(1)}_{r-1} = x^{(1)}_r + 1
\qquad\text{and}\qquad
\rot(x)^{(1)}_{\cox^{-1}(j)} = 
\begin{cases}
x^{(1)}_j & \text{if $j \ne 1$,} \\
x^{(1)}_j -1 & \text{if $j = 1$,}
\end{cases}
\]
we have $\rot(x)^{(1)}_{r-1} > \rot(x)^{(1)}_{\cox^{-1}(j)}$ in all cases, and hence $r-1 \trianglelefteq_{\rot(x)} \cox^{-1}(j)$.
\end{proof}

\subsection{Total orders on indices}

Let $x$ be an alcove.  For an integer $m \in \Z$, let
\[
\Fix(x,m) = \{ i \in \{1,\ldots,n\} \mid \text{$x^{(1)}_i = m$ and $\sP(x)(i) = i$} \}.
\]
In other words, $\Fix(x,m)$ is the set of $\sP(x)$-fixed points whose corresponding coordinate in $x^{(1)}$ has value $m$. The proof of the following lemma is a straightforward consequence of this definition.

\begin{lem}\label{lem:po-extend}
Let $x$ be an alcove.
\begin{enumerate}
\item Two distinct elements $i,j \in \{1, \ldots, n \}$ are incomparable under $\trianglelefteq_x$ if and only if there is an $m \in \Z$ such that $i, j \in \Fix(x,m)$.
\item Let $i,j,k \in \{1, \ldots, n\}$.  Suppose that $i,j \in \Fix(x,m)$ but $k \notin \Fix(x,m)$, so that $i$ and $j$ are each comparable to $k$.  We have $i \trianglelefteq_x k$ (resp.~$i \trianglerighteq_x k$) if and only if $j \trianglelefteq_x k$ (resp.~$j \trianglerighteq_x k$).
\item We have\label{it:po-ex-fixbij}
\[
\Fix(\rot(x),m) = \cox^{-1}(\Fix(x,m)).
\]
\end{enumerate}
\end{lem}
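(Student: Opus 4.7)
The plan is to read all three parts directly off Table~\ref{tab:po-defn}, the definitions of $\Fix$ and $\rot$, and the identity $\sP(\rot(x)) = \cox^{-1}\sP(x)\cox$ stated earlier. No new idea is needed; the main work is to organize the case analysis systematically.

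For part~(1), I first reduce to the case $x^{(1)}_i = x^{(1)}_j$, since otherwise condition~(1) of Table~\ref{tab:po-defn} already orders $i$ and $j$. I then consider the nine possibilities for the signs of the differences $\sP(x)^{-1}(i) - i$ and $\sP(x)^{-1}(j) - j$. Conditions~(2), (3), (5), (6) in Table~\ref{tab:po-defn}, together with the variants obtained by exchanging $i$ and $j$, cover eight of the nine cases and always yield comparability (in particular, condition~(3) handles every pairing in which at least one value is strictly negative, and condition~(5) handles the remaining ``one zero, one positive'' case). The remaining case, where $\sP(x)^{-1}(i) = i$ and $\sP(x)^{-1}(j) = j$, is governed by condition~(4), which produces comparability only when $i = j$. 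Hence distinct $i, j$ are incomparable precisely when $x^{(1)}_i = x^{(1)}_j =: m$ and both are $\sP(x)$-fixed, i.e., $i, j \in \Fix(x,m)$.

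For part~(2), the comparison of $k$ with any index in $\Fix(x,m)$ through Table~\ref{tab:po-defn} depends only on (a)~how $x^{(1)}_k$ compares to $m$ and (b)~how $\sP(x)^{-1}(k)$ compares to $k$: the $\Fix$-index enters the table only through the common data ``$x^{(1)} = m$'' and ``$\sP(x)^{-1}(\text{self}) = \text{self}$,'' so the outcome cannot depend on whether we pair $k$ with $i$ or with $j$. The exclusion $k \notin \Fix(x,m)$ rules out the combination $x^{(1)}_k = m$ with $\sP(x)^{-1}(k) = k$, and in each of the remaining possibilities condition~(1), (3), or~(5) applies uniformly to $(i,k)$ and to $(j,k)$, giving the claim.

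For part~(3), the identity $\sP(\rot(x)) = \cox^{-1}\sP(x)\cox$ shows that $\cox^{-1}(i)$ is a $\sP(\rot(x))$-fixed point if and only if $i$ is a $\sP(x)$-fixed point, so what remains is the equality $\rot(x)^{(1)}_{\cox^{-1}(i)} = x^{(1)}_i$ for every fixed point $i$. Let $r = \sP(x)(1)$. If $r = 1$, direct computation from the definition of $\rot$ gives the equality for all $i$. If $r > 1$, formula~\eqref{eqn:po-nontriv} shows that the equality fails only at $i = 1$ and $i = r$; but $\sP(x)(1) = r \ne 1$, and if $\sP(x)(r) = r$ then $r$ would have two $\sP(x)$-preimages, so neither $1$ nor $r$ is a fixed point. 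Hence the exceptional coordinates never intervene, and $\Fix(\rot(x),m) = \cox^{-1}(\Fix(x,m))$ follows. The main obstacle is really only being careful with the sign cases in part~(1); everything else is bookkeeping.
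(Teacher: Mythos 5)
Your proof is correct and fills in exactly the ``straightforward consequence of the definition'' that the paper leaves to the reader; there is no separate argument in the paper to compare against. One small expository slip: in part~(1), the parenthetical remark that ``condition~(3) handles every pairing in which at least one value is strictly negative'' is not quite right, since the case where \emph{both} differences $\sP(x)^{-1}(i)-i$ and $\sP(x)^{-1}(j)-j$ are strictly negative is covered by condition~(2) (together with the $i\leftrightarrow j$ swap, using that one of $i\ge j$ or $j\ge i$ always holds), not by condition~(3). You do list condition~(2) in the main sentence, so the nine-case check is complete and the conclusion stands; only the parenthetical gloss is misleading. Parts~(2) and~(3) are argued cleanly, and citing equation~\eqref{eqn:po-nontriv} in part~(3) is legitimate since Proposition~\ref{prop:po-compare} precedes this lemma in the paper.
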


Lemma~\ref{lem:po-extend} has the following consequence:
\begin{equation}
\begin{minipage}{4in}
Refining $\trianglelefteq_x$ to a total order on $\{1,\ldots, n\}$ is equivalent to choosing a total order on $\Fix(x,m)$ for each $m \in \Z$.
\end{minipage}
\end{equation}

Suppose now that we have a refinement of $\trianglelefteq_x$ to a total order $\blacktriangleleft_x$, and a refinement of $\trianglelefteq_{\rot(x)}$ to a total order $\blacktriangleleft_{\rot(x)}$.  We say that the pair
\[
\blacktriangleleft_x,\quad \blacktriangleleft_{\rot(x)}
\]
is \emph{compatible} if for each $m \in \Z$, the total order on $\Fix(x,m)$ induced by $\blacktriangleleft_{x}$ agrees with that on $\Fix(\rot(x),m)$ induced by $\blacktriangleleft_{\rot(x)}$ under the bijection of Lemma~\ref{lem:po-extend}\eqref{it:po-ex-fixbij}.  Obviously, given a total order $\blacktriangleleft_x$ refining $\trianglelefteq_x$, there is a unique compatible total order $\blacktriangleleft_{\rot(x)}$ refining $\trianglelefteq_{\rot(x)}$.

\begin{lem}\label{lem:to-compat}
Let $x$ be an alcove, and let
\[
\blacktriangleleft_x,\quad \blacktriangleleft_{\rot(x)}
\]
be a compatible pair of total orders that refine $\trianglelefteq_x$ and $\trianglelefteq_{\rot(x)}$, respectively.  Let $i,j \in \{1,\ldots,n\}$, and assume that $i,j \ne \sP(x)(1)$.  Then
\[
i \blacktriangleleft_x j
\qquad\text{if and only if}\qquad
\cox^{-1}(i) \blacktriangleleft_{\rot(x)} \cox^{-1}(j).
\]
Furthermore, if $\sP(x)(1) \blacktriangleleft_x j$, then $\cox^{-1}(\sP(x)(1)) \blacktriangleleft_{\rot(x)} \cox^{-1}(j)$.
\end{lem}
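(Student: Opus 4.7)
The plan is to reduce both assertions of the lemma to Proposition~\ref{prop:po-compare}, invoking the compatibility hypothesis only to handle pairs that are incomparable under the underlying partial orders. The general principle I would exploit is that, since $\blacktriangleleft_x$ refines $\trianglelefteq_x$, for two comparable indices $i \ne j$, the strict relation $i \blacktriangleleft_x j$ is completely determined by $\trianglelefteq_x$, and similarly for $\rot(x)$. Compatibility, by its very definition, supplies exactly the matching information on pairs that are incomparable in $\trianglelefteq_x$, i.e.\ pairs that sit in a common $\Fix(x,m)$ (by Lemma~\ref{lem:po-extend}(1)).

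For the first statement, where $i, j \ne \sP(x)(1)$, I would case-split on whether $i$ and $j$ are comparable in $\trianglelefteq_x$. In the comparable case, Proposition~\ref{prop:po-compare}\eqref{it:po-iff} transports the direction of the comparison to $\cox^{-1}(i)$ and $\cox^{-1}(j)$ under $\trianglelefteq_{\rot(x)}$, and the iff follows from the general principle. In the incomparable case, both indices lie in some common $\Fix(x,m)$; Lemma~\ref{lem:po-extend}\eqref{it:po-ex-fixbij} identifies this set with $\Fix(\rot(x),m)$ via $\cox^{-1}$, and compatibility gives the iff on the nose.

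For the second statement, I would split on whether $\sP(x)(1) = 1$. When $\sP(x)(1) \ne 1$, this element is not $\sP(x)$-fixed and so lies in no $\Fix(x,m)$; by Lemma~\ref{lem:po-extend}(1) it is comparable under $\trianglelefteq_x$ to every other index, so the hypothesis $\sP(x)(1) \blacktriangleleft_x j$ upgrades to $\sP(x)(1) \trianglelefteq_x j$, and Proposition~\ref{prop:po-compare}\eqref{it:po-move} closes the argument after refining to $\blacktriangleleft_{\rot(x)}$. When $\sP(x)(1) = 1$, Proposition~\ref{prop:po-compare}\eqref{it:po-triv} lets me run the same comparable/incomparable dichotomy as in the first statement, with compatibility on $\Fix(x, x^{(1)}_1)$ handling the incomparable subcase.

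I expect the main obstacle to be purely bookkeeping: keeping straight which indices are excluded in each part of Proposition~\ref{prop:po-compare}, verifying that the exclusion $i,j \ne \sP(x)(1)$ in part one of the lemma lines up cleanly with the exclusions in Proposition~\ref{prop:po-compare}\eqref{it:po-iff}, and checking that ``incomparability under $\trianglelefteq_x$'' really does transport correctly through $\cox^{-1}$ so that compatibility is precisely the tool needed for the residual cases. Once this is tracked carefully, no new ideas beyond the two earlier results seem to be required.
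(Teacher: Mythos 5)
Your proposal is correct and takes essentially the same route as the paper: the paper's proof is the one-liner ``This follows from Proposition~\ref{prop:po-compare} and the definition of `compatible','' and your comparable/incomparable dichotomy together with the use of Lemma~\ref{lem:po-extend} and compatibility is precisely the careful expansion of that sentence.
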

\begin{proof}
This follows from Proposition~\ref{prop:po-compare} and the definition of  ``compatible.''
\end{proof}

\begin{lem}\label{lem:to-seq}
Suppose we have a sequence of total orders
\[
\blacktriangleleft_x, \quad \blacktriangleleft_{\rot(x)}, \quad \ldots, \quad \blacktriangleleft_{\rot^{n-1}(x)}
\]
on $\{1,\ldots,n\}$, where $\blacktriangleleft_{\rot^j(x)}$ refines $\trianglelefteq_{\rot^j(x)}$, and where consecutive terms are compatible pairs.  Then the pair
\[
\blacktriangleleft_{\rot^{n-1}(x)},\quad \blacktriangleleft_x
\]
is also compatible.
\end{lem}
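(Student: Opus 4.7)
\medskip

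The plan is to reduce the statement to a single observation: the bijections $\cox^{-1}$ between consecutive Fix-sets compose over $n$ steps to the identity, because $\cox^n = \mathrm{id}$ and $\rot^n(x) = x$ (Lemma~\ref{lem:rot-basic}).

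First, I would unpack the definition. Fix $m \in \Z$. By Lemma~\ref{lem:po-extend}\eqref{it:po-ex-fixbij}, iterating the bijection $\cox^{-1}$ gives, for each $0 \le j \le n$, a bijection
\[
\cox^{-j}\colon \Fix(x,m) \xrightarrow{\ \sim\ } \Fix(\rot^j(x),m),
\]
and for $j = n$ this is $\cox^{-n} = \mathrm{id}$, consistent with $\rot^n(x) = x$. The compatibility of each consecutive pair $(\blacktriangleleft_{\rot^j(x)}, \blacktriangleleft_{\rot^{j+1}(x)})$ for $0 \le j \le n-2$ says exactly that the single-step bijection $\cox^{-1}\colon \Fix(\rot^j(x),m) \to \Fix(\rot^{j+1}(x),m)$ is order-preserving between the induced total orders on these sets.

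Next, composing these $n-1$ order-preserving bijections, the bijection
\[
\cox^{-(n-1)}\colon \Fix(x,m) \xrightarrow{\ \sim\ } \Fix(\rot^{n-1}(x),m)
\]
is order-preserving from $\blacktriangleleft_x$ to $\blacktriangleleft_{\rot^{n-1}(x)}$. To prove the wrap-around compatibility, I need the bijection $\cox^{-1}\colon \Fix(\rot^{n-1}(x),m) \to \Fix(\rot^n(x),m) = \Fix(x,m)$ to be order-preserving from $\blacktriangleleft_{\rot^{n-1}(x)}$ to $\blacktriangleleft_x$. But this $\cox^{-1}$ is exactly the inverse of $\cox^{-(n-1)}$, since $\cox^{-1} \circ \cox^{-(n-1)} = \cox^{-n} = \mathrm{id}$; the inverse of an order-preserving bijection between totally ordered finite sets is order-preserving, so we are done. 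Since $m$ was arbitrary, this gives compatibility for every Fix-set, which is exactly the required compatibility of the pair $(\blacktriangleleft_{\rot^{n-1}(x)}, \blacktriangleleft_x)$.

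There is no real obstacle here; the only thing to be careful about is not to confuse the domain and codomain of the various bijections, and to note that the hypothesis asks only for the partial orders $\trianglelefteq_{\rot^j(x)}$ to be refined by $\blacktriangleleft_{\rot^j(x)}$ (which plays no role in the compatibility argument itself, since compatibility is purely about the induced orders on the Fix-sets). The whole proof is essentially an invocation of $\cox^n = \mathrm{id}$.
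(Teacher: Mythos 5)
Your proof is correct and follows essentially the same argument as the paper: the paper composes the chain of equivalences $i \blacktriangleleft' j \Leftrightarrow \cox(i) \blacktriangleleft_{\rot^{n-1}(x)} \cox(j) \Leftrightarrow \cox^{n-1}(\cox(i)) \blacktriangleleft_x \cox^{n-1}(\cox(j)) \Leftrightarrow i \blacktriangleleft_x j$, which is precisely your observation that the wrap-around bijection $\cox^{-1}$ is the inverse of the order-isomorphism $\cox^{-(n-1)}$ because $\cox^n=\mathrm{id}$. The only cosmetic difference is that the paper phrases the conclusion as showing that the unique compatible refinement $\blacktriangleleft'$ coincides with $\blacktriangleleft_x$, rather than verifying the compatibility condition directly as you do.
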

\begin{proof}
For $1 \le k \le n-1$, it follows from the definition of ``compatible pair'' that for $i, j \in \Fix(\rot^k(x),m)$, we have
\[
i \blacktriangleleft_{\rot^k(x)} j
\qquad\text{if and only if}\qquad
\cox^{k}(i) \blacktriangleleft_x \cox^{k}(j).
\]
Let $\blacktriangleleft'$ be the unique total order refining $\trianglelefteq_x = \trianglelefteq_{\rot^n(x)}$ such that
\[
\blacktriangleleft_{\rot^{n-1}(x)},\quad \blacktriangleleft'
\]
is a compatible pair.  Then, for $i, j, \in \Fix(x,m) = \Fix(\rot^n(x),m)$, we have
\begin{align*}
i \blacktriangleleft' j\quad &\Longleftrightarrow\quad \cox(i) \blacktriangleleft_{\rot^{n-1}(x)} \cox(j) \\
&\Longleftrightarrow\quad \cox^{n-1}(\cox(i)) \blacktriangleleft_{x} \cox^{n-1}(\cox(j)) \\
&\Longleftrightarrow\quad i \blacktriangleleft_{x} j.
\end{align*}
In other words, $\blacktriangleleft'$ coincides with $\blacktriangleleft_x$.
\end{proof}

\begin{lem}\label{lem:to-perm}
Let $x$ be an alcove.  Choose a total order $\blacktriangleleft_x$ refining $\trianglelefteq_x$, and then extend it to a sequence 
\[
\blacktriangleleft_x, \quad \blacktriangleleft_{\rot(x)}, \quad \ldots, \quad \blacktriangleleft_{\rot^{n-1}(x)}
\]
as in Lemma~\ref{lem:to-seq},  For $k \in \{1, \ldots, n\}$, let $\delta^{(k)} \in \fS_n$ be the permutation such that
\[
\delta^{(k)}(i) \le \delta^{(k)}(j)
\qquad\text{if and only if}\qquad
\cox^{1-k}(i) \blacktriangleleft_{\rot^{k-1}(x)} \cox^{1-k}(j).
\]
Then $\delta^{(k)}(x^{(k)} - \varpi_{k-1})$ is a dominant coweight.
\end{lem}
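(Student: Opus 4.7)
The plan is to unwind the definitions and reduce dominance to a single, essentially immediate observation about how any total order refining $\trianglelefteq_y$ is forced to agree with condition~(1) of Table~\ref{tab:po-defn}. The role of Lemmas~\ref{lem:to-compat} and~\ref{lem:to-seq} is to manufacture the refinements $\blacktriangleleft_{\rot^{k-1}(x)}$; once they exist, each $k$ may be handled independently.

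Fix $k$ and set $y = \rot^{k-1}(x)$ and $\blacktriangleleft \mathrel{:=} \blacktriangleleft_y$. First I would use Lemma~\ref{lem:rot-basic}(3) with $i=1$, $j=k-1$ to obtain
\[
y^{(1)} = \rot^{k-1}(x)^{(1)} = \cox^{1-k}(x^{(k)} - \varpi_{k-1}),
\]
equivalently $(x^{(k)}-\varpi_{k-1})_m = y^{(1)}_{\cox^{1-k}(m)}$. This is the only place where the rotation formulas enter.

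Next, rewrite the defining condition on $\delta^{(k)}$ by setting $\pi = \delta^{(k)}\cox^{k-1}$: it becomes $\pi(i') \le \pi(j')$ iff $i' \blacktriangleleft j'$, so $\pi$ is the ranking permutation of $\blacktriangleleft$, i.e., $\pi^{-1}(1), \ldots, \pi^{-1}(n)$ lists $\{1,\ldots,n\}$ in increasing $\blacktriangleleft$-order. Combining with the previous step,
\[
\bigl(\delta^{(k)}(x^{(k)} - \varpi_{k-1})\bigr)_\ell
= (x^{(k)}-\varpi_{k-1})_{(\delta^{(k)})^{-1}(\ell)}
= y^{(1)}_{\pi^{-1}(\ell)}.
\]
So dominance of $\delta^{(k)}(x^{(k)}-\varpi_{k-1})$ is equivalent to the monotonicity statement
\[
i \blacktriangleleft j \quad\Longrightarrow\quad y^{(1)}_i \ge y^{(1)}_j.
\]

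Finally, I would verify this implication. Suppose to the contrary that $i \blacktriangleleft j$ but $y^{(1)}_i < y^{(1)}_j$. Then condition~(1) of Table~\ref{tab:po-defn} gives $j \trianglelefteq_y i$ with $j \ne i$, and since $\blacktriangleleft$ refines $\trianglelefteq_y$ this forces $j \blacktriangleleft i$, contradicting $i \blacktriangleleft j$. This completes the proof.

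There is no real obstacle; the substance is simply the bookkeeping of the $\cox$-twist $\pi = \delta^{(k)}\cox^{k-1}$, which turns the apparently $k$-dependent statement into a single assertion about the total order $\blacktriangleleft_y$ on the coordinates of $y^{(1)}$. The conclusion uses only the very first clause of the definition of $\trianglelefteq_y$; the more delicate conditions (2)–(6) in Table~\ref{tab:po-defn} play no role here — they matter only in the earlier lemmas that guarantee a coherent sequence $\blacktriangleleft_{\rot^j(x)}$ exists in the first place.
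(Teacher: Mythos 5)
Your proof is correct and follows essentially the same route as the paper: the paper first proves the $k=1$ case by the same contradiction argument from condition~(1) of Table~\ref{tab:po-defn}, then reduces general $k$ to $k=1$ by observing that $\delta^{(k)}\cox^{k-1}$ is the ``$\delta^{(1)}$'' associated to $\rot^{k-1}(x)$ and invoking Lemma~\ref{lem:rot-basic}. You merge the two steps into one computation via $\pi = \delta^{(k)}\cox^{k-1}$, but the content is identical.
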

\begin{proof}
We begin by proving the lemma in the special case where $k = 1$.  We must show that $\delta^{(1)}(x^{(1)})$ is dominant, i.e., that
\[
x^{(1)}_{(\delta^{(1)})^{-1}(1)} \ge x^{(1)}_{(\delta^{(1)})^{-1}(2)} \ge \cdots \ge x^{(1)}_{(\delta^{(1)})^{-1}(n)}.
\]
Suppose one of these is false, say $x^{(1)}_{(\delta^{(1)})^{-1}(k)} < x^{(1)}_{(\delta^{(1)})^{-1}(k+1)}$.  Then $(\delta^{(1)})^{-1}(k+1) \trianglelefteq_x (\delta^{(1)})^{-1}(k)$, and hence $(\delta^{(1)})^{-1}(k+1) \blacktriangleleft_x (\delta^{(1)})^{-1}(k)$.  But this contradicts the definition of $\delta^{(1)}$, which implies that $(\delta^{(1)})^{-1}(k) \blacktriangleleft_x (\delta^{(1)})^{-1}(k+1)$.

Next, observe that
\[
\delta^{(k)}(\cox^{k-1}(i)) \le \delta^{(k)}(\cox^{k-1}(j))
\qquad\text{if and only if}\qquad
i \blacktriangleleft_{\rot^{k-1}(x)} j.
\]
Thus, the element $\delta^{(k)} \cox^{k-1} \in \fS_n$ defined in terms of $x$ coincides with the permutation $\delta^{(1)}$ defined in terms of $\rot^{k-1}(x)$.  By the previous paragraph, the coweight
\[
\delta^{(k)}(\cox^{k-1}(\rot^{k-1}(x)^{(1)}))
\]
is dominant.  Recall from Lemma~\ref{lem:rot-basic} that $\rot^{k-1}(x)^{(1)} = \cox^{1-k}(x^{(k)} - \varpi_{k-1})$.  We conclude that $\delta^{(k)}(x^{(k)} - \varpi_{k-1})$ is dominant.
\end{proof}

\begin{lem}\label{lem:delta-compare} 
Let $x$ be an alcove.  Choose a compatible sequence of total orders as in Lemma~\ref{lem:to-seq}, and let $\delta^{(1)}, \ldots, \delta^{(n)} \in \fS_n$ be as in Lemma~\ref{lem:to-perm}.  Let $t \in \{1,\ldots,n-1\}$.  For each $k \in \{1,\ldots, n\}$, there is an integer $b(t,k)$ such that 
\[
(\delta^{(k+1)})^{-1}(\varpi_t) = (\delta^{(k)})^{-1}(\varpi_t) - e_{b(t,k)} + e_{\sP(x)(k)},
\] where in case $k=n$, we set $\delta^{(n+1)}=\delta^{(1)}$. 
\end{lem}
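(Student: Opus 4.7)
The plan is to translate the claim into a statement about certain subsets of $\{1, \ldots, n\}$ and then deduce it directly from the compatibility between consecutive total orders established in Lemma~\ref{lem:to-compat}. Writing $S_k = \{i : \delta^{(k)}(i) \le t\}$, one has $(\delta^{(k)})^{-1}(\varpi_t) = \sum_{i \in S_k} e_i$. Thus the lemma is equivalent to the assertion that either $S_{k+1} = S_k$ (in which case one sets $b(t,k) = \sP(x)(k)$), or $S_{k+1} = (S_k \setminus \{b\}) \cup \{\sP(x)(k)\}$ for some $b \in S_k$ with $b \ne \sP(x)(k)$ and $\sP(x)(k) \notin S_k$ (in which case one takes $b(t,k) = b$).

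From Lemma~\ref{lem:to-perm}, $S_k = \cox^{k-1}(T_k)$, where $T_k$ denotes the set of the first $t$ elements under $\blacktriangleleft_{\rot^{k-1}(x)}$. Setting $y = \rot^{k-1}(x)$ and $r = \sP(y)(1)$, which equals $\cox^{-(k-1)}(\sP(x)(k))$ by the formula $\sP(\rot(x)) = \cox^{-1}\sP(x)\cox$, Lemma~\ref{lem:to-compat} tells us that under the renaming $i \mapsto \cox^{-1}(i)$, passing from $\blacktriangleleft_y$ to $\blacktriangleleft_{\rot(y)}$ preserves the relative order of all pairs not involving $r$, while $\cox^{-1}(r)$ is placed in $\blacktriangleleft_{\rot(y)}$ at a position no later than the position of $r$ in $\blacktriangleleft_y$. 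For the wrap-around case $k = n$, Lemma~\ref{lem:to-seq} ensures the same compatibility holds for the pair $\blacktriangleleft_{\rot^{n-1}(x)}, \blacktriangleleft_x$, and the convention $\delta^{(n+1)} = \delta^{(1)}$ makes the argument uniform.

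Writing the order $\blacktriangleleft_y$ as $a_1 \blacktriangleleft_y \cdots \blacktriangleleft_y a_n$, letting $P$ be the position of $r = a_P$ and $q \le P$ the new position of $\cox^{-1}(r)$ in $\blacktriangleleft_{\rot(y)}$, the argument reduces to a short case analysis. In the cases $q > t$ (which forces $P > t$, so $r \notin T_k$) and $P \le t$ (so $r \in T_k$ and $q \le P \le t$), a direct computation shows $T_{k+1} = \cox^{-1}(T_k)$, hence $S_{k+1} = S_k$. In the remaining case $q \le t < P$ (so $r \notin T_k$ but $\cox^{-1}(r)$ sits among the first $t$ of $\blacktriangleleft_{\rot(y)}$), inserting $\cox^{-1}(r)$ at position $q$ into the list $\cox^{-1}(a_1), \ldots, \cox^{-1}(a_{P-1}), \cox^{-1}(a_{P+1}), \ldots$ shows that $T_{k+1} = (\cox^{-1}(T_k) \setminus \{\cox^{-1}(a_t)\}) \cup \{\cox^{-1}(r)\}$; applying $\cox^k$ yields $S_{k+1} = (S_k \setminus \{\cox^{k-1}(a_t)\}) \cup \{\sP(x)(k)\}$, so one sets $b(t,k) = \cox^{k-1}(a_t)$.

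The main obstacle is not any deep conceptual issue but rather the indexing: one must keep careful track of how the renaming $i \mapsto \cox^{-1}(i)$ and the identification $S_k = \cox^{k-1}(T_k)$ interact with the constraint that $\cox^{-1}(r)$ may only move to an earlier position, and separate out the sub-case $r \in T_k$ (in which $\sP(x)(k)$ already lies in $S_k$ and nothing actually moves) from $r \notin T_k$ (in which $\sP(x)(k)$ is genuinely swapped into $S_{k+1}$).
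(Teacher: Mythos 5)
Your proposal is correct and takes essentially the same approach as the paper: both translate the statement into a comparison of the two $t$-element initial segments $T_k$, $T_{k+1}$ under consecutive total orders, both use Lemma~\ref{lem:to-compat} to show that passing between them under $\cox^{-1}$ moves only the element $\cox^{-1}(r)$ (and only to an earlier position), and both split into the same three cases according to how the old and new positions of that element relate to $t$. The paper's $m'$, $m$, $S$, $S'$ are your $q$, $P$, $T_k$, $T_{k+1}$; the paper merely collapses your first two cases into the single condition ``$t < m'$ or $m \le t$.''
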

\begin{proof}
List the integers $1, \ldots, n$ in order according to $\blacktriangleleft_{\rot^{k-1}(x)}$ and $\blacktriangleleft_{\rot^k(x)}$, as follows:
\begin{gather}
a_1 \blacktriangleleft_{\rot^{k-1}(x)} a_2 \blacktriangleleft_{\rot^{k-1}(x)} \cdots \blacktriangleleft_{\rot^{k-1}(x)} a_n, \label{eqn:dr-ai}\\
b_1 \blacktriangleleft_{\rot^{k}(x)} b_2 \blacktriangleleft_{\rot^{k}(x)} \cdots \blacktriangleleft_{\rot^{k}(x)} b_n.\label{eqn:dr-bi}
\end{gather}
Denote the smallest $t$ elements with respect to each of these orders as follows:
\[
S = \{a_1, a_2, \ldots, a_t \}, 
\qquad
S' = \{b_1, b_2, \ldots, b_t \}.
\]

Recall that
\[
\sP(\rot^{k-1}(x))(1) = \cox^{1-k}(\sP(x)(\cox^{k-1}(1))) = \cox^{1-k}(\sP(x)(k)).
\]
Applying to Lemma~\ref{lem:to-compat} to the alcove $\rot^{k-1}(x)$, we see that if $i, j \ne \cox^{1-k}(\sP(x)(k))$, we have $i \blacktriangleleft_{\rot^{k-1}(x)} j$ if and only if $\cox^{-1}(i) \blacktriangleleft_{\rot^k(x)} \cox^{-1}(j)$.  We also have that $\cox^{1-k}(\sP(x)(k)) \blacktriangleleft_{\rot^{k-1}(x)} j$ implies $\cox^{-k}(\sP(x)(k)) \blacktriangleleft_{\rot^k(x)} \cox^{-1}(j)$.  Now let $m$ and $m'$ be such that
\[
a_m = \cox^{1-k}(\sP(x)(k)),
\qquad
b_{m'} = \cox^{-1}(a_m) = \cox^{-k}(\sP(x)(k)).
\]
With this notation, we rephrase the conclusion of Lemma~\ref{lem:to-compat} as follows:
\begin{align}
i \blacktriangleleft_{\rot^{k-1}(x)} j &\Longleftrightarrow \cox^{-1}(i) \blacktriangleleft_{\rot^k(x)} \cox^{-1}(j) &&\text{if $i, j \ne a_m$,} \notag \\
a_m \blacktriangleleft_{\rot^{k-1}(x)} j &\Longrightarrow b_{m'} \blacktriangleleft_{\rot^k(x)} \cox^{-1}(j). \label{eqn:dr-toc}
\end{align}
In other words, if we omit $a_m$ from the sequence~\eqref{eqn:dr-ai} and then apply $\cox^{-1}$, we obtain the sequence in~\eqref{eqn:dr-bi} with $b_{m'}$ omitted.  Moreover,~\eqref{eqn:dr-toc} implies that
\[
m' \le m.
\]
Applying these observations to the smallest $t$ elements in~\eqref{eqn:dr-ai} and~\eqref{eqn:dr-bi}, we find
\begin{equation}\label{eqn:dr-scomp}
S' =
\begin{cases}
\cox^{-1}(S) & \text{if $t < m'$ or $m \le t$,} \\
(\cox^{-1}(S) \smallsetminus \{\cox^{-1}(a_t)\}) \cup \{b_{m'}\} & \text{if $m' \le t < m$.}
\end{cases}
\end{equation}

Now, from the  definition of $\delta^{(k)}$, we have
\begin{multline*}
\cox^{1-k}((\delta^{(k)})^{-1}(1)) \blacktriangleleft_{\rot^{k-1}(x)} 
\cox^{1-k}((\delta^{(k)})^{-1}(2)) \blacktriangleleft_{\rot^{k-1}(x)} \\
\cdots
\blacktriangleleft_{\rot^{k-1}(x)} \cox^{1-k}((\delta^{(k)})^{-1}(n)).
\end{multline*}
Comparing this to~\eqref{eqn:dr-ai}, we obtain
\[
a_i = \cox^{1-k}((\delta^{(k)})^{-1}(i) \qquad\text{for $1 \le i \le n$.}
\]
Similar considerations shows that
\[
b_i = \cox^{-k}((\delta^{(k+1)})^{-1}(i) \qquad\text{for $1 \le i \le n$.}
\]
We therefore have
\[
(\delta^{(k)})^{-1}(\varpi_t) = \sum_{i=1}^t e_{(\delta^{(k)})^{-1}(i)} = \sum_{j \in \cox^{k-1}(S)} e_j,
\qquad
(\delta^{(k+1)})^{-1}(\varpi_t) = \sum_{j \in \cox^k(S')} e_j.
\]
Combining these formulas with~\eqref{eqn:dr-scomp}, we find that
\[
(\delta^{(k+1)})^{-1}(\varpi_t) =
\begin{cases}
(\delta^{(k)})^{-1}(\varpi_t) & \text{if $t < m'$, or if $m \le t$}, \\
(\delta^{(k)})^{-1}(\varpi_t) - e_{\cox^{k-1}(a_t)} + e_{\cox^k(b_{m'})} & \text{if $m' \le t < m$.}
\end{cases}
\]
Recall that $\cox^k(b_{m'}) = \sP(x)(k)$.  Let
\[
b(t,k) =
\begin{cases}
\sP(x)(k) & \text{if $t < m'$ or $m \le t$,} \\
\cox^{k-1}(a_t) & \text{if $m' \le t < m$.}
\end{cases}
\]
We thus have $(\delta^{(k+1)})^{-1}(\varpi_t) = (\delta^{(k)})^{-1}(\varpi_t) - e_{b(t,k)} + e_{\sP(x)(k)}$ in all cases.
\end{proof}

\begin{thm}\label{thm:convolve-comb}
Let $\mu, \lambda \in \bX^+$ be dominant coweights such that $\lambda = \mu + \varpi_t$ for some $t\in\{1,\dots,n-1\}$.  For any $\lambda$-permissible alcove $x$, there exists a $\mu$-permissible alcove $y$ such that $x$ is in relative position $\varpi_t$ with respect to $y$.
\end{thm}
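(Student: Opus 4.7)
The plan is to construct $y$ directly from $x$ using the permutations $\delta^{(1)}, \ldots, \delta^{(n)}$ provided by Lemma~\ref{lem:to-perm}, and then verify the three required conditions (alcove axioms, relative position $\varpi_t$, and $\mu$-permissibility) one by one. Specifically, I would choose a total order $\blacktriangleleft_x$ refining $\trianglelefteq_x$, propagate it to $\rot^{k-1}(x)$ as in Lemma~\ref{lem:to-seq}, obtain the permutations $\delta^{(k)} \in \fS_n$ of Lemma~\ref{lem:to-perm}, and then define
\[
y^{(k)} = x^{(k)} - (\delta^{(k)})^{-1}(\varpi_t) \qquad \text{for } 1 \le k \le n.
\]

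To verify that $y = (y^{(1)}, \ldots, y^{(n)})$ is an alcove, I would invoke Lemma~\ref{lem:delta-compare}, which (together with the $k = n$ case) gives
\[
y^{(k+1)} - y^{(k)} = (x^{(k+1)} - x^{(k)}) - \bigl((\delta^{(k+1)})^{-1}(\varpi_t) - (\delta^{(k)})^{-1}(\varpi_t)\bigr) = e_{\sP(x)(k)} - \bigl(e_{\sP(x)(k)} - e_{b(t,k)}\bigr) = e_{b(t,k)},
\]
using the cyclic convention $y^{(n+1)} = y^{(1)} + \varpi_n$. Both the coordinate-wise inequalities $y^{(k)} \le_\co y^{(k+1)}$ and the size conditions are then immediate, so $y \in \cA$ with $\sP(y)(k) = b(t,k)$. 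The relative-position condition is also automatic: $x^{(i)} - y^{(i)} = (\delta^{(i)})^{-1}(\varpi_t)$ is a permutation of $\varpi_t$, so $\dom(x^{(i)} - y^{(i)}) = \varpi_t$ as required.

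The remaining point is $\mu$-permissibility. Writing $\lambda^{(i)} = \delta^{(i)}(x^{(i)} - \varpi_{i-1})$, Lemma~\ref{lem:to-perm} guarantees that $\lambda^{(i)} \in \bX^+$, and the $\lambda$-permissibility of $x$ gives $\lambda^{(i)} \preceq \lambda = \mu + \varpi_t$. Applying $\delta^{(i)}$ to $y^{(i)} - \varpi_{i-1}$ yields $\lambda^{(i)} - \varpi_t$, so
\[
\dom(y^{(i)} - \varpi_{i-1}) = \dom(\lambda^{(i)} - \varpi_t).
\]
Lemma~\ref{lem:dom-subtract}, applied with $\lambda' = \lambda^{(i)}$, then yields $\dom(\lambda^{(i)} - \varpi_t) \preceq \mu$, completing the proof.

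Given the substantial preparatory work already done in this section, the argument reduces to routine bookkeeping; the main obstacles were the design of the compatible system of total orders $\blacktriangleleft_{\rot^{k-1}(x)}$ (so that the $\delta^{(k)}$ satisfy the transition law in Lemma~\ref{lem:delta-compare}) and the combinatorial Lemma~\ref{lem:dom-subtract} on subtracting a fundamental coweight, both of which are already in hand. The only real choice in the proof is the initial total order $\blacktriangleleft_x$; any refinement of $\trianglelefteq_x$ will produce a valid $y$.
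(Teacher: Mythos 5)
Your proposal is correct and follows essentially the same route as the paper: define $y^{(k)} = x^{(k)} - (\delta^{(k)})^{-1}(\varpi_t)$, verify the alcove axioms via Lemma~\ref{lem:delta-compare}, check relative position directly, and deduce $\mu$-permissibility from Lemma~\ref{lem:to-perm} and Lemma~\ref{lem:dom-subtract}. The bookkeeping matches the paper's proof step for step.
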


\begin{proof}
Let $\delta^{(1)}, \ldots, \delta^{(n)} \in \fS_n$ be as in Lemma~\ref{lem:to-perm}.  Define a sequence of coweights $y = (y^{(1)}, \ldots, y^{(n)})$ by
\[
y^{(k)} = x^{(k)} - (\delta^{(k)})^{-1}(\varpi_t).
\]
We have $|y^{(k)}| = |x^{(k)}| - t$, and since $x$ is an alcove, it follows that
\[
|y^{(2)}| = |y^{(1)}|+1,
\quad
|y^{(3)}| = |y^{(2)}|+1,
\quad
\ldots
\quad
|y^{(n)}| = |y^{(n-1)}|+1.
\]
Next, we will show that
\begin{equation}\label{eqn:cc-ineq}
y^{(1)} \le_\co y^{(2)} \le_\co \cdots \le_\co y^{(n)} \le_\co y^{(1)} + \varpi_n.
\end{equation}
For $1 \le k \le n-1$, by Lemma~\ref{lem:delta-compare}, we have
\begin{multline*}
y^{(k+1)} = x^{(k+1)} - (\delta^{(k+1)})^{-1}(\varpi_t) \\
= (x^{(k)} + e_{\sP(x)(k)}) - ( (\delta^{(k)})^{-1}(\varpi_t) - e_{b(t,k)} + e_{\sP(x)(k)}) = y^{(k)} + e_{b(t,k)},
\end{multline*}
and this establishes the first $n-1$ inequalities in~\eqref{eqn:cc-ineq}.  But in fact the same calculation is also valid when $k = n$, if we set $x^{(n+1)} = x^{(1)} + \varpi_n$ and $y^{(n+1)} = y^{(1)} + \varpi_n$.  We conclude that $y$ is an alcove.

For $1 \le k \le n$, we have $\dom(x^{(k)} - y^{(k)}) = \dom ((\delta^{(k)})^{-1}(\varpi_t)) = \varpi_t$, so $x$ is in relative position $\varpi_t$ with respect to $y$.  Finally, observe that
\[
\delta^{(k)}(y^{(k)} - \varpi_{k-1}) + \varpi_t = \delta^{(k)}(x^{(k)} - \varpi_{k-1}) = \dom(x^{(k)} - \varpi_{k-1}) \preceq \lambda.
\]
In particular, $\delta^{(k)}(y^{(k)} - \varpi_{k-1}) + \varpi_t$ is dominant.  Lemma~\ref{lem:dom-subtract} then tells us that
\[
\dom(y^{(k)} - \varpi_{k-1}) = \dom (\delta^{(k)}(y^{(k)} - \varpi_{k-1})) \preceq \mu.
\]
Thus, $y$ is $\mu$-permissible.
\end{proof}

\section{Convolution}
\label{sec:conv}

Let $X$ and $Y$ be two subsets of $\Gr$, and assume that $Y$ is stable under $\GL_n(\bO)$.  Their \emph{twisted product} is the set
\[
X \ttimes Y = \left\{ (L, L') \in \Gr \times \Gr \,\Big|\,
\begin{array}{@{}c@{}}
\text{$L \in X$, and for some (or any) $g \in \GL_n(\bK)$} \\
\text{such that $g \cdot L = \bO^n$, we have $g \cdot L' \in Y$}
\end{array}
\right\}.
\]
Let us check that this is well-defined: if $g, g' \in \GL_n(\bK)$ both satisfy $g \cdot L = g' \cdot L = \bL^0$, then $g'g^{-1}$ preserves $\bO^n$, which means that $g'g^{-1}  \in \GL_n(\bO)$.  Since $Y$ is $\GL_n(\bO)$-stable, we see that $g \cdot L' \in Y$ if and only if $g' \cdot L' \in Y$.  Let
\[
\mult: X \ttimes Y \to \Gr
\]
be the projection map onto the second factor: $\mult(L,L') = L'$.  

As an example, for $1 \le k \le n-1$, we have (cf.~Example~\ref{ex:locfund})
\begin{equation}\label{eqn:globtwist-fund}
X \ttimes \Gr_{\varpi_k} = 
\left\{ (L, L') \in \Gr \times \Gr \,\Big|\,
\begin{array}{@{}c@{}}
\text{$L \in X$, $tL \subset L' \subset L$,} \\ \text{and $\dim L'/tL = n-k$}
\end{array}
\right\}.
\end{equation}

There is an alternative description of $X \ttimes Y$ as a quotient of a certain space with a free $\GL_n(\bO)$-action.  This description shows that the \emph{first} projection map $X \ttimes Y \to X$ makes $X \times Y$ into a locally trivial fibration over $X$ with fibers isomorphic to $Y$.  In particular, In particular, if $X$ and $Y$ are irreducible subvarieties of $\Gr$, it follows that $X \ttimes Y$ is also irreducible.

\begin{lem}\label{lem:conv-image}
The image of $\mult: \overline{\Gr_\lambda} \ttimes \overline{\Gr_\mu} \to \Gr$ is $\overline{\Gr_{\lambda + \mu}}$.
\end{lem}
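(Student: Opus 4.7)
The plan is to prove both containments separately. The map $\mult$ is $\GL_n(\bO)$-equivariant for the diagonal action $h \cdot (L, L') = (hL, hL')$ (an immediate check from the definition of $\ttimes$), and $\overline{\Gr_\lambda} \ttimes \overline{\Gr_\mu}$ is projective, being a fiber bundle over the projective base $\overline{\Gr_\lambda}$ with projective fiber $\overline{\Gr_\mu}$, as noted in the excerpt. Thus the image of $\mult$ is automatically a closed, $\GL_n(\bO)$-stable subset of $\Gr$.

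For the inclusion $\overline{\Gr_{\lambda+\mu}} \subseteq \mult(\overline{\Gr_\lambda} \ttimes \overline{\Gr_\mu})$, the idea is to exhibit $\bL^{\lambda+\mu}$ explicitly in the image. The pair $(\bL^\lambda, \bL^{\lambda+\mu})$ lies in the domain: taking $g = \bt^{-\lambda}$ gives $g \bL^\lambda = \bO^n$ and $g \bL^{\lambda+\mu} = \bL^\mu \in \Gr_\mu$, which verifies the condition in the definition of $\ttimes$. By equivariance and closedness, the image then contains $\overline{\GL_n(\bO) \cdot \bL^{\lambda+\mu}} = \overline{\Gr_{\lambda+\mu}}$.

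For the reverse inclusion, I would take any $(L, L') \in \overline{\Gr_\lambda} \ttimes \overline{\Gr_\mu}$ and use the orbit stratifications $\overline{\Gr_\lambda} = \bigsqcup_{\lambda' \preceq \lambda} \Gr_{\lambda'}$ and $\overline{\Gr_\mu} = \bigsqcup_{\mu' \preceq \mu} \Gr_{\mu'}$ to write $L = g_1 \bL^{\lambda'}$ and $L' = g_1 \bt^{\lambda'} g_2 \bt^{\mu'} \bO^n$ for some $g_1, g_2 \in \GL_n(\bO)$ and dominant $\lambda' \preceq \lambda, \mu' \preceq \mu$. By the $\GL_n(\bO)$-invariance of $\overline{\Gr_{\lambda+\mu}}$ and the containment $\overline{\Gr_{\lambda'+\mu'}} \subseteq \overline{\Gr_{\lambda+\mu}}$ (which follows from $\lambda'+\mu' \preceq \lambda+\mu$), the problem reduces to the following key claim: for all $g \in \GL_n(\bO)$ and dominant $\lambda', \mu' \in \bX^+$, the lattice $\bt^{\lambda'} g \bt^{\mu'} \bO^n$ lies in $\overline{\Gr_{\lambda'+\mu'}}$.

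The plan for the key claim is to identify the Cartan invariant $\nu \in \bX^+$ of $h := \bt^{\lambda'} g \bt^{\mu'}$ via minor valuations. By the classical Smith normal form characterization over the DVR $\bO$, the quantity $\sum_{i=n-k+1}^n \nu_i$ equals the minimum $t$-adic valuation of the $k \times k$ minors of $h$. Since the $(i, j)$-entry of $h$ is $t^{\lambda'_i + \mu'_j} g_{ij}$, the minor on rows $I$ and columns $J$ with $|I| = |J| = k$ factors as
\[
\det(h_{I, J}) = t^{\sum_{i \in I} \lambda'_i + \sum_{j \in J} \mu'_j} \det(g_{I, J}),
\]
which has valuation at least $\sum_{i \in I} \lambda'_i + \sum_{j \in J} \mu'_j$ because $\det(g_{I, J}) \in \bO$. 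By the dominance of $\lambda'$ and $\mu'$, this lower bound is minimized over $|I| = |J| = k$ at $I = J = \{n-k+1, \ldots, n\}$, giving $\sum_{i=n-k+1}^n (\lambda'+\mu')_i$. Combining, $\sum_{i=n-k+1}^n \nu_i \geq \sum_{i=n-k+1}^n (\lambda'+\mu')_i$ for all $k$, with equality at $k = n$ (both sides equal $|\lambda'+\mu'|$, using $\det g \in \bO^\times$). These inequalities translate exactly to $\nu \preceq \lambda'+\mu'$, so $\bt^{\lambda'} g \bt^{\mu'} \bO^n \in \Gr_\nu \subseteq \overline{\Gr_{\lambda'+\mu'}}$. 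The main obstacle is the key claim, specifically the invocation of the minor characterization of the Cartan invariant: this is classical linear algebra over $\bO$ but is not explicitly developed in the excerpt and would need either a citation or a brief self-contained derivation.
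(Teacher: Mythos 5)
Your proposal is correct, and it takes a genuinely different route from the paper's (sketched) proof. The paper argues indirectly: since $\mult$ is proper, $\GL_n(\bO)$-equivariant, and has irreducible domain, the image must be a closed, irreducible, $\GL_n(\bO)$-stable subset, hence the closure of a single spherical orbit $\overline{\Gr_\nu}$; then one pins down $\nu = \lambda+\mu$ by checking which $\bL^\nu$ lie in the image via~\eqref{eqn:gr-i-closure}. You instead prove both containments head-on. Your forward inclusion (exhibit $(\bL^\lambda,\bL^{\lambda+\mu})$ in the domain, then use equivariance and properness) is clean and correct. For the reverse inclusion you decompose an arbitrary point of the twisted product as $L' = g_1 \bt^{\lambda'}g_2\bt^{\mu'}\bO^n$ and reduce to bounding the Cartan invariant of $\bt^{\lambda'}g\bt^{\mu'}$, which you settle by the minor-valuation characterization of Smith normal form over the DVR $\bO$. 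The calculation is right: the $k\times k$ minor on rows $I$, columns $J$ factors out $t^{\sum_{I}\lambda'_i + \sum_J \mu'_j}$, the minimum over $|I|=|J|=k$ of that exponent is $\sum_{i=n-k+1}^n(\lambda'+\mu')_i$ by dominance, and these tail-sum inequalities with equality at $k=n$ are exactly $\nu \preceq \lambda'+\mu'$. The reduction $\lambda'+\mu'\preceq\lambda+\mu$ is also justified since partial sums add. What your approach buys is an explicit, self-contained verification that avoids invoking irreducibility of the twisted product; what it costs is the appeal to the Smith normal form/minor-GCD fact, which you rightly flag as needing a citation or short derivation (it is standard, e.g.\ in the theory of matrices over a PID, but is not established in the paper). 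The paper's approach is shorter once the geometric framework (properness, irreducibility of orbit closures and twisted products) is taken for granted, but its key ``check'' is itself only sketched and would ultimately require a computation similar in spirit to yours.
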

\begin{proof}[Proof Sketch]
Using~\eqref{eqn:gr-i-closure}, one can check that the largest dominant $\nu$ (with respect to $\preceq$) such that $\bL^\nu$ lies in this image is $\nu = \lambda + \mu$.  On the other hand, because $\mult: \overline{\Gr_\lambda} \ttimes \overline{\Gr_\mu} \to \Gr$ is proper and $\GL_n(\bO)$-equivariant, and because its domain is irreducible, its image must be a closed, irreducible, $\GL_n(\bO)$-invariant subvariety, and hence the closure of some $\GL_n(\bO)$-orbit.  The lemma follows.
\end{proof}

There is also a ``global'' version of the twisted product construction.  Defining it in full generality requires the introduction of a ``global group scheme'' that exhibits $I$ as a degeneration of $\GL_n(\bO)$; see~\cite[\S3.2]{zhu:ccpr}. In this paper, we will make do with an \emph{ad hoc} definition that is sufficient for our purposes.  For $\lambda, \mu \in \bX^+$, let
\[
X_\lambda \ttimes_\C X_\mu = \left\{ 
\begin{array}{r}
(y,L_1,\ldots,L_n, L'_1, \ldots, L'_n) \quad\\
\quad \in \bGr \times_\C \bGr 
\end{array}
\,\Big|\,
\begin{array}{c}
\text{for $1 \le i \le n$, we have} \\
(L_i,L_i') \in \overline{\Gr_\lambda} \ttimes \overline{\Gr_\mu}
\end{array}
\right\}.
\]
Let
\[
\bmult: X_\lambda \ttimes_\C X_\mu \to \bGr
\]
be the second projection map: $\bmult(y, L_1,\ldots, L_n, L'_1, \ldots, L'_n) = (y, L'_1, \ldots, L'_n)$.

Let $(X_\lambda \ttimes_\C X_\mu)_\spc$, resp.~$(X_\lambda \ttimes_\C X_\mu)_\gen$, denote the preimage of $0$, resp.~$\C^\times$, under the projection map $X_\lambda \ttimes_\C X_{\mu} \to \C$.  By Lemma~\ref{lem:bgr-gen-spc}, we have $(X_\lambda \ttimes_\C X_\mu)_\gen \cong \C^\times \times (\overline{\Gr_\lambda} \ttimes \overline{\Gr_\mu})$, and we can make the following identification:
\begin{equation}\label{eqn:conv-spc}
(X_\lambda \ttimes_\C X_{\mu})_\spc \subset \Fl \times \Fl.
\end{equation}

\begin{lem}\label{lem:globconv-irred}
Let $\lambda \in \bX^+$, and let $1 \le k \le n-1$.  If $X_\lambda$ is irreducible, then so is $X_\lambda \ttimes_\C X_{\varpi_k}$.
\end{lem}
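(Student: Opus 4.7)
The strategy is to show that $(X_\lambda \ttimes_\C X_{\varpi_k})_\gen$ is dense in $X_\lambda \ttimes_\C X_{\varpi_k}$; since the generic part is irreducible, the whole space is then its closure, hence irreducible.

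The irreducibility of the generic part is immediate: by the analysis preceding the lemma, $(X_\lambda \ttimes_\C X_{\varpi_k})_\gen \cong \C^\times \times (\overline{\Gr_\lambda} \ttimes \overline{\Gr_{\varpi_k}})$.  Here $\overline{\Gr_\lambda}$ is irreducible as the closure of a $\GL_n(\bO)$-orbit, $\overline{\Gr_{\varpi_k}} = G(n,n-k)$ is irreducible, and their twisted product is a locally trivial $G(n,n-k)$-bundle.  Using the hypothesis that $X_\lambda$ is irreducible, $\dim X_\lambda = 1 + \dim \overline{\Gr_\lambda}$, so the generic part has dimension $\dim X_\lambda + k(n-k)$.

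To pin down the total dimension of $X_\lambda \ttimes_\C X_{\varpi_k}$ I study the first projection $p_1 \colon X_\lambda \ttimes_\C X_{\varpi_k} \to X_\lambda$.  Over a point with $y \ne 0$, the fiber is $G(n,n-k)$, since the lattice family condition forces $L_i' = \theta_{i-1}(y)^{-1} L_1'$ for $i \ge 2$.  Over a point with $y = 0$, converting the lattice family to a lattice chain $\uL = (M_1 \supset \cdots \supset M_n)$ via Lemma~\ref{lem:bgr-gen-spc}, the fiber is
\[
F_\uL = \{\uL' \in \Fl : M_i' \subset M_i \text{ with } \dim M_i/M_i' = k \text{ for each } i\}.
\]
By stratifying $F_\uL$ according to the set of ``exceptional'' indices $i$ with $M_i' \subset M_{i+1}$ and checking that the extra $\mathbb{P}^{n-k}$ of freedom for $M_{i+1}'$ at each exceptional step exactly balances the dimension loss imposed by the containment on $M_1',\ldots,M_i'$, one gets $\dim F_\uL = k(n-k)$ and hence $\dim X_\lambda \ttimes_\C X_{\varpi_k} = \dim X_\lambda + k(n-k)$.

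Since the generic part is irreducible, any irreducible component of $X_\lambda \ttimes_\C X_{\varpi_k}$ meeting it must coincide with its closure.  Any other component is disjoint from the generic part, hence lies in $p_1^{-1}(X_\lambda \cap \bGr_\spc)$, whose dimension is at most $(\dim X_\lambda - 1) + k(n-k)$, strictly less than $\dim X_\lambda + k(n-k)$.  To rule out such ``purely special'' components, I will show that every point in the special fiber of $X_\lambda \ttimes_\C X_{\varpi_k}$ is a limit of points in the generic part, by explicit one-parameter families in the spirit of Section~\ref{sec:minuscule} and using the alcove combinatorics of Section~\ref{sec:alcoves}.  The main obstacle is producing these families uniformly: already in the simplest case $n=2$, $k=1$, $\lambda=0$, the special fiber $F_\uL$ is reducible (two $\mathbb{P}^1$'s meeting at a point), and different directions of approach to the singular point yield limits in different components, so a careful analysis covering every approach direction is needed.
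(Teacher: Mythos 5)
Your proof is incomplete, and you acknowledge this yourself: the entire burden of the argument lands on the step ``show that every point in the special fiber is a limit of points in the generic part, by explicit one-parameter families,'' which you do not carry out and explicitly flag as the ``main obstacle.'' The dimension count, even if verified, is a red herring --- it can only show that purely special components would have \emph{small} dimension, not that they do not exist, so the whole weight of the argument rests on the unwritten limit construction. Worse, if you could produce such families for arbitrary $\lambda$, you would essentially be reproving the main theorem directly at $\lambda + \varpi_k$, which is precisely the work that this lemma is designed to \emph{avoid}: the explicit family construction in Section~\ref{sec:minuscule} is already delicate for $\lambda = \varpi_{n-r}$, and doing it for the convolution space attached to general $\lambda$ and $\varpi_k$ would be at least as hard.

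The paper's actual argument sidesteps this entirely. It introduces the rank-$n^2$ vector bundle $\bV \to \bGr$ whose fiber over a lattice family records an $n$-tuple of vectors $v_i \in L_i/tL_i$, then cuts out the open subset $\widetilde{\bGr}$ where $v_1, \theta_1(y)(v_2), \ldots, \theta_{n-1}(y)(v_n)$ are linearly independent in $L_1/tL_1$; the fiberwise nonemptiness of this locus is checked directly for both $y \neq 0$ and $y = 0$. Pulling back to $X_\lambda$ gives $\widetilde{X_\lambda}$, a nonempty open subset of a vector bundle over $X_\lambda$, hence irreducible by the hypothesis. A choice of such vectors yields, for each $i$, a trivialization $g_i \colon \C^n \overset{\sim}{\to} L_i/tL_i$ that is \emph{coherent across the whole lattice family}, and this produces a surjection $q \colon \widetilde{X_\lambda} \times_\C X_{\varpi_k} \to X_\lambda \ttimes_\C X_{\varpi_k}$. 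The source is an irreducible variety (using $X_{\varpi_k}$ irreducible, i.e.\ Theorem~\ref{thm:main-fund}), so the target is irreducible. No limits, no case analysis of approach directions, no dimension counting. This is the standard way to prove irreducibility of a twisted product, and it is the idea missing from your proposal: realize the twisted product as a surjective image of an honest product by introducing an auxiliary irreducible space that globally trivializes the twist.

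One further remark: your claim $\dim F_{\uL} = k(n-k)$ for the fiber over a special point is not obviously correct and would need a real proof; the fibers of the special part of the convolution over $\Fl$ can have components of different dimensions in general. Since you do not need the dimension count in the end (even in your own plan), this is a secondary issue, but it is worth noting that it is not as easy as the proposal suggests.
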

\begin{proof}
Let $\cV$ be the variety
\[
\cV = \{ (L,v) \mid L \in \Gr,\ v \in L/tL \}.
\]
This is a vector bundle over $\Gr$ of rank $n$.  Next, let
\[
\bV = \{ (y, (L_1,v_1), (L_2,v_2), \ldots, (L_n,v_n)) \in \C \times \cV \times \cdots \times \cV \mid (y, L_1, \ldots, L_n) \in \bGr \}.
\]
This is a vector bundle over $\bGr$ of rank $n^2$.  Now, fix a point
\begin{equation}\label{eqn:bv-point}
(y, (L_1,v_1), (L_2,v_2), \ldots, (L_n,v_n)) \in \bV.
\end{equation}
Recall that $\theta_{i-1}(y)(L_i) \subset L_1$, so $\theta_{i-1}(y): \bK^n \to \bK^n$ determines a map $\theta_{i-1}(y): L_i/tL_i \to L_1/tL_1$.  We may therefore consider the sequence of vectors
\begin{equation}\label{eqn:bv-vecseq}
v_1,\quad  \theta_1(y)(v_2),\quad  \theta_2(y)(v_3),\quad  \ldots,\quad \theta_{n-1}(y)(v_n) \qquad\in L_1/tL_1.
\end{equation}
Let
\begin{multline*}
\widetilde{\bGr} = \{ (y, (L_1,v_1), (L_2,v_2), \ldots, (L_n,v_n)) \in \bV \mid \\
\text{$v_1,\  \theta_1(y)(v_2),\  \ldots,\  \theta_{n-1}(y)(v_n)$ are linearly independent} \}.
\end{multline*}
This is an open subset of $\bV$.  Moreover, its intersection with each fiber of $\bV \to \bGr$ is nonempty: if $y \ne 0$, then the maps $\theta_{i-1}(y): L_i/tL_i \overset{\sim}{\to} L_1/tL_1$ are isomorphisms, and any basis for $L_1/tL_1$ determines a linearly independent sequence as in~\eqref{eqn:bv-vecseq}. If $y = 0$, then $L_1 \supset \theta_1(0)L_2 \supset \cdots \supset \theta_{n-1}(0)(L_n) \supset tL_1$ is a lattice chain, and we can choose each $v_i$ so that $\theta_{i-1}(0)(v_i) \notin \theta_i(0)L_{i+1}$ to get a linearly independent sequence as in~\eqref{eqn:bv-vecseq}.

Finally, let
\[
\widetilde{X_\lambda} = X_\lambda \times_{\bGr} \widetilde{\bGr}.
\]
This is a nonempty open subset of the vector bundle $X_\lambda \times_{\bGr} \bV$ over $X_\lambda$.  

Given a point as in~\eqref{eqn:bv-point}, we have more generally that
\[
\begin{cases}
\theta_{i-1}(y)^{-1}\theta_{j-1}(y)(L_j) \subset L_i & \text{if $j \ge i$,} \\
\theta_{i-1}(y)^{-1}\theta_{j-1}(y)\theta_n(y)(L_j) \subset L_i & \text{if $j < i$.}
\end{cases}
\]
We can generalize~\eqref{eqn:bv-vecseq} and obtain a sequence of vectors
\begin{multline}\label{eqn:bv-vecseq2}
\hbox{\small$\theta_{i-1}(y)^{-1}\theta_n(y)(v_1), \ \theta_{i-1}(y)^{-1}\theta_1(y)\theta_n(y)(v_2), \ \ldots,\  \theta_{i-1}(y)^{-1}\theta_{i-2}(y)\theta_n(y)(v_{i-1}),$} \\
\hbox{\small$v_i,\  \theta_{i-1}(y)^{-1}\theta_i(y)(v_{i+1}), \ \ldots,\ \theta_{i-1}(y)^{-1}\theta_{n-1}(y)(v_n)$} \qquad \in L_i/tL_i
\end{multline}
for each $i$, $1 \le i \le n$.  If our point~\eqref{eqn:bv-point} actually lies in $\widetilde{\bGr}$, then the set in~\eqref{eqn:bv-vecseq2} is again linearly independent for each $i$.  In this case, we denote by
\[
g_i: \C^n \to L_i/tL_i
\]
the linear isomorphism that sends $e_1, \ldots, e_n \in \C^n$ to the basis vectors listed in~\eqref{eqn:bv-vecseq2}.  If $H \subset \C^n$ is a linear subspace, say of codimension $k$, then the subspace $g_i(H) \subset L_i/tL_i$ corresponds to some lattice sandwiched between $L_i$ and $tL_i$ that has valuation $\nu(L_i) + k$.  In a minor abuse of notation, we denote this lattice by
\[
g_i(H) + tL_i.
\] We may also write $g_i(L)$ for a lattice $L$ satisfying $t^n\bO^n\subset L\subset \bO^n$. In that case, $g_i(L) = g_i(L/t^n\bO^n)+tL_i$. 

We are now ready to define a morphism of varieties
\[
q: \widetilde{X_\lambda} \times_\C X_{\varpi_k} \to X_\lambda \ttimes_\C X_{\varpi_k},
\]
using the description from~\eqref{eqn:xfund-defn} for $X_{\varpi_k}$.  We set
\begin{multline}\label{eqn:qmap-defn}
q(y, (L_1,v_1),(L_2,v_2), \ldots, (L_n,v_n), H_1, \ldots, H_n) =\\
(y,L_1,L_2,\ldots, L_n, g_1(H_1) + tL_1, g_2(H_2) + tL_2, \ldots , g_n(L_n) + tL_n).
\end{multline}
It is immediate from~\eqref{eqn:globtwist-fund} that each pair $(L_i, g_i(H_i)+tL_i)$ lies in $\overline{\Gr_\lambda} \ttimes \Gr_{\varpi_k}$, so $q$ does indeed take values in $X_\lambda \ttimes_\C X_{\varpi_k}$.  We also claim that $q$ is surjective.  Indeed, given a point
\begin{equation}\label{eqn:qmap-surj}
(y,L_1, \ldots, L_n, L'_1, \ldots, L'_n) \in X_\lambda \ttimes_\C X_{\varpi_k},
\end{equation}
choose any point $(y, (L_1,v_1),\ldots,(L_n,v_n)) \in \widetilde{X_\lambda}$ lying over $(y, L_1, \ldots, L_n) \in X_\lambda$.  Then~\eqref{eqn:qmap-surj} is the image under $q$ of the point
\[
(y, (L_1,v_1),\ldots,(L_n,v_n), g_1^{-1}(L'_1/tL_1), g_2^{-1}(L'_2/tL_2), \ldots, g_n^{-1}(L'_n/tL_n)).
\]

We are now ready to complete the proof of the lemma.  Since $X_\lambda$ is assumed to be irreducible, so is the vector bundle $X_\lambda \times_{\bGr} \bV$ and its open subset $\widetilde{X_\lambda}$.  The variety $X_{\varpi_k}$ is also irreducible, by Theorem~\ref{thm:main-fund}.  Since $q$ is surjective, we conclude that $X_\lambda \ttimes_\C X_{\varpi_k}$ is irreducible.
\end{proof}

\begin{cor}\label{cor:globconv-image}
Let $\lambda \in \bX^+$, and let $1 \le k \le n-1$.  If $X_\lambda$ is irreducible, then the image of $\bmult: X_\lambda \ttimes_\C X_{\varpi_k} \to \bGr$ is $\overline{\bGr_{\lambda + \varpi_k}}$.
\end{cor}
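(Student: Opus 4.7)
The plan is to show that the image $Z := \bmult(X_\lambda \ttimes_\C X_{\varpi_k})$ coincides with $\overline{\bGr_{\lambda+\varpi_k}}$ by an irreducibility-plus-density argument focused on the generic part.

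First, the hypothesis that $X_\lambda$ is irreducible lets us invoke Lemma~\ref{lem:globconv-irred} to conclude that $X_\lambda \ttimes_\C X_{\varpi_k}$ is irreducible; hence $Z$, as the image of an irreducible variety under an algebraic map, is irreducible. Next, I would verify that $\bmult$ is proper (and in particular has closed image): by Lemma~\ref{lem:conv-image} applied coordinate-wise, the map factors through $X_{\lambda+\varpi_k}$, and both the source $X_\lambda \ttimes_\C X_{\varpi_k}$ and the target $X_{\lambda+\varpi_k}$ are proper over $\C$ (each is a closed subvariety of $\C$ times a finite product of spherical Schubert varieties, which are projective).

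The next step is to identify the generic part of $Z$. Using Lemma~\ref{lem:bgr-gen-spc} and the fact that $\theta_{i-1}(y) \in \GL_n(\bO)$ for $y \in \C^\times$, the condition ``$(L_i, L'_i) \in \overline{\Gr_\lambda} \ttimes \overline{\Gr_{\varpi_k}}$ for all $i$'' reduces over $\bGr_\gen$ to its $i = 1$ instance, yielding
\[
(X_\lambda \ttimes_\C X_{\varpi_k})_\gen \cong \C^\times \times (\overline{\Gr_\lambda} \ttimes \overline{\Gr_{\varpi_k}}),
\]
and under this identification $\bmult$ becomes $\mathrm{id} \times \mult$. Lemma~\ref{lem:conv-image} then gives
\[
Z \cap \bGr_\gen = \C^\times \times \overline{\Gr_{\lambda+\varpi_k}}.
\]

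Finally, I would combine these ingredients. Since $Z$ is irreducible and closed, and $\bGr_\gen$ is an open subset of $\bGr$, the nonempty open subset $Z \cap \bGr_\gen$ of $Z$ is dense in $Z$. Therefore
\[
Z = \overline{Z \cap \bGr_\gen} = \overline{\C^\times \times \overline{\Gr_{\lambda+\varpi_k}}} = \overline{\bGr_{\lambda+\varpi_k}},
\]
the last equality being the definition of the global Schubert variety. The main technical point will be the properness assertion for $\bmult$; although morally clear from the projectivity of the relevant spherical Schubert varieties, carrying it out cleanly requires some care with the ind-scheme structure of $\bGr$, ultimately reducing to the standard fact that a morphism over $\C$ between varieties proper over $\C$ is itself proper.
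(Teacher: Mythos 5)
Your argument is correct and follows essentially the same route as the paper: identify the generic part of the image as $\C^\times \times \overline{\Gr_{\lambda+\varpi_k}}$ via Lemma~\ref{lem:conv-image}, then use irreducibility (Lemma~\ref{lem:globconv-irred}) and properness of $\bmult$ to conclude that the image is closed, irreducible, and hence the closure of its generic part. The paper states properness without elaboration; your observation that $\bmult$ factors through $X_{\lambda+\varpi_k}$, so that one only needs properness of a morphism between $\C$-proper varieties, is a clean way to make that step precise.
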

\begin{proof}
By~\eqref{eqn:conv-spc} and Lemma~\ref{lem:conv-image}, the generic part of this image is $\C^\times \times \overline{\Gr_{\lambda+\varpi_k}}$.  On the other hand, $\bmult: X_\lambda \ttimes_\C X_{\varpi_k} \to \bGr$ is proper, and its domain is irreducible by Lemma~\ref{lem:globconv-irred}, so its image is closed and irreducible, and hence equal to $\overline{\bGr_{\lambda+\varpi_k}}$.
\end{proof}

\begin{ex}\label{ex:alcove-basis}
Let $x$ be an alcove, and consider the point $\ubL^x \in \Fl$.  Via Lemma~\ref{lem:bgr-gen-spc}, this corresponds to the point
\[
(0, \bL^{x^{(1)}}, \bL^{x^{(2)} - \varpi_1}, \ldots, \bL^{x^{(n)} - \varpi_{n-1}}) \in \bGr_\spc.
\]
Let
\[
v_i = \theta_{i-1}(0)^{-1} t^{x^{(1)}_{\sP(x)(i)}} e_{\sP(x)(i)} \qquad\text{for $1 \le i \le n$.}
\]
Then the vectors $v_1, \theta_1(0)(v_2), \ldots, \theta_{n-1}(0)(v_n)$ form an $\bO$-basis for $\bL^{x^{(1)}}$, and then yield a $\C$-basis for $\bL^{x^{(1)}}/t\bL^{x^{(1)}}$.  The corresponding map $g_1: \bO^n \to \bL^{x^{(1)}} \subset \bK^n$ is given by the matrix
\[
g_1 = \bt^{x^{(1)}} \dot\sP(x) = \dot \sP(x) \bt^{\sP(x)^{-1} \cdot x^{(1)}}.
\]
Similar considerations show that we have
\begin{multline}\label{eqn:alcove-gi}
g_i = \bt^{-\varpi_{i-1}}g_1 \bt^{\varpi_{i-1}} = \dot \sP(x) \bt^{\sP(x)^{-1}(x^{(1)} - \varpi_{i-1}) + \varpi_{i-1}} \\
= \dot\sP(x) \bt^{\sP(x)^{-1} \cdot (x^{(i)} - \varpi_{i-1})}: \bO^n \to \bL^{x^{(i)} - \varpi_{i-1}}.
\end{multline}
\end{ex}

\begin{lem}\label{lem:alcove-basis}
Let $\mu \in \bX^+$, let $1 \le k \le n-1$.  Let $y$ be a $\mu$-permissible alcove, and let $x$ be another alcove in relative position $\varpi_k$ with respect to $y$.  Then the pair $(\ubL^y, \ubL^x) \in \Fl \times \Fl$ lies in $(X_\mu \ttimes_\C X_{\varpi_k})_\spc$ under the identification~\eqref{eqn:conv-spc}.
\end{lem}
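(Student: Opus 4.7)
The proof plan is essentially a direct unwinding of definitions, using the identification of $\Fl$ with $\bGr_\spc$ from Lemma~\ref{lem:bgr-gen-spc} together with the characterization~\eqref{eqn:gr-i-closure} of $I$-orbits in spherical Schubert varieties. The two conditions I need to verify are (i) $\ubL^y$ lies in $X_\mu \cap \bGr_\spc$, and (ii) for each coordinate $i$, the pair formed from $\ubL^y$ and $\ubL^x$ satisfies the twisted-product condition at the spot corresponding to $\overline{\Gr_\mu} \ttimes \overline{\Gr_{\varpi_k}}$.

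Under Lemma~\ref{lem:bgr-gen-spc}, the pair $(\ubL^y, \ubL^x)$ corresponds to
\[
\bigl(0,\ \bL^{y^{(1)}},\ \bL^{y^{(2)}-\varpi_1},\ \ldots,\ \bL^{y^{(n)}-\varpi_{n-1}},\ \bL^{x^{(1)}},\ \bL^{x^{(2)}-\varpi_1},\ \ldots,\ \bL^{x^{(n)}-\varpi_{n-1}}\bigr).
\]
For (i), the $\mu$-permissibility of $y$ says $\dom(y^{(i)} - \varpi_{i-1}) \preceq \mu$ for all $i$, so~\eqref{eqn:gr-i-closure} gives $\bL^{y^{(i)}-\varpi_{i-1}} \in \overline{\Gr_\mu}$, which is exactly what is needed for the first factor. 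This also shows $\ubL^y \in X_\mu \cap \bGr_\spc$ by Lemma~\ref{lem:x-adm}.

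For (ii), I need to check that for each $i$, the pair $(\bL^{y^{(i)}-\varpi_{i-1}}, \bL^{x^{(i)}-\varpi_{i-1}})$ lies in $\overline{\Gr_\mu} \ttimes \overline{\Gr_{\varpi_k}}$. The convenient choice of $g \in \GL_n(\bK)$ sending the first lattice to $\bO^n$ is $g = \bt^{-(y^{(i)}-\varpi_{i-1})}$; this is diagonal, lies in $\GL_n(\bK)$, and indeed satisfies $g \cdot \bL^{y^{(i)}-\varpi_{i-1}} = \bO^n$. Applying it to the second lattice yields $g \cdot \bL^{x^{(i)}-\varpi_{i-1}} = \bL^{x^{(i)} - y^{(i)}}$. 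The relative position hypothesis gives $\dom(x^{(i)} - y^{(i)}) = \varpi_k$, so $\bL^{x^{(i)}-y^{(i)}} \in I \cdot \bL^{x^{(i)}-y^{(i)}} \subset \overline{\Gr_{\varpi_k}}$ by~\eqref{eqn:gr-i-closure}. (Since $\varpi_k$ is minuscule for $\GL_n$, in fact $\overline{\Gr_{\varpi_k}} = \Gr_{\varpi_k}$, but this is not needed.) This verifies the twisted-product condition.

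There is no genuine obstacle here: the lemma is essentially bookkeeping to confirm that the combinatorial ``relative position $\varpi_k$'' condition on alcoves does translate into the geometric twisted-product condition on the corresponding lattice chains. The only subtle point is choosing the transporter $g$ explicitly as $\bt^{-(y^{(i)}-\varpi_{i-1})}$ so that $g \cdot \bL^{x^{(i)}-\varpi_{i-1}}$ visibly has the form $\bL^{x^{(i)} - y^{(i)}}$; once that is done, $\mu$-permissibility handles the first factor and relative position $\varpi_k$ handles the second.
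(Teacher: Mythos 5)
Your proof is correct, and it is genuinely more direct than the paper's. You verify membership in $(X_\mu \ttimes_\C X_{\varpi_k})_\spc$ by unwinding the definition coordinate by coordinate: for each $i$, the diagonal matrix $g = \bt^{-(y^{(i)}-\varpi_{i-1})}$ transports $\bL^{y^{(i)}-\varpi_{i-1}}$ to $\bO^n$, and since diagonal matrices commute with one another, $g \cdot \bL^{x^{(i)}-\varpi_{i-1}} = \bL^{x^{(i)}-y^{(i)}}$; then $\mu$-permissibility and the relative-position hypothesis feed directly into~\eqref{eqn:gr-i-closure}. The paper takes a more elaborate route: it constructs an auxiliary alcove $z$ (with terms $z^{(i)} = \sP(y)^{-1}(x^{(i)} - y^{(i)}) + \varpi_{i-1}$), checks that $z$ is $\varpi_k$-permissible, then builds an explicit point of $\widetilde{X_\mu} \times X_{\varpi_k}$ using the vectors and maps $g_i$ from Example~\ref{ex:alcove-basis}, and finally shows that its image under the surjective map $q$ of Lemma~\ref{lem:globconv-irred} is $(\ubL^y, \ubL^x)$. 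The paper's approach exhibits a concrete preimage under $q$ — a bit of extra information that is in the spirit of the convolution machinery built in that section, but unnecessary for the statement itself. Your approach cuts straight to the definition of the twisted product and is simpler; both are valid, and you correctly noted that choosing a diagonal transporter is the one genuinely useful observation needed to make the direct verification go through.
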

\begin{proof}
Consider the element $(\sP(y)^{-1}\sP(x), \sP(x)^{-1}(x^{(1)} - y^{(1)})) \in \Wext$.  Let
\[
z = (\sP(y)^{-1}\sP(x), \sP(x)^{-1}(x^{(1)} - y^{(1)})) \cdot \omega
\]
be the corresponding alcove.  Explicitly, the $i$-th term of $z$ is given by
\begin{multline*}
z^{(i)} = \sP(y)^{-1}\sP(x)\cdot( \sP(x)^{-1}(x^{(1)} - y^{(1)}) + \varpi_{i-1}) \\
= \sP(y)^{-1}( x^{(1)} + \sP(x)(\varpi_{i-1}) - y^{(1)} - \sP(y)(\varpi_{i-1})) + \varpi_{i-1} \\
= \sP(y)^{-1}(x^{(i)} - y^{(i)}) + \varpi_{i-1}.
\end{multline*}
Since $\dom(x^{(i)} - y^{(i)}) = \varpi_k$ by assumption, we see explicitly that $z$ is $\varpi_k$-permissible.  Thus, $\ubL^z$ is a point in $X_{\varpi_k} \cap \bGr_\spc$.

Next, consider the following point in $\widetilde{X_\mu} \times X_{\varpi_k}$ given by
\begin{multline}\label{eqn:convcalc-point}
\big(\quad (0,\  (\bL^{y^{(1)}}, t^{y^{(1)}_{\sP(y)(1)}} e_{\sP(y)(1)}),\ (\bL^{y^{(2)} - \varpi_1}, \theta_1(0)^{-1}  t^{y^{(1)}_{\sP(y)(2)}} e_{\sP(y)(2)}), \ldots,\\(\bL^{y^{(n)} - \varpi_{n-1}}, \theta_{n-1}(0)^{-1}  t^{y^{(1)}_{\sP(y)(n)}} e_{\sP(y)(n)})),\\ (0, \bL^{z^{(1)}}, \bL^{z^{(2)} - \varpi_1}, \ldots, \bL^{z^{(n)} - \varpi_{n-1}})\quad \big),
\end{multline}
where the component in $\widetilde{X_\mu}$ is given by the construction in Example~\ref{ex:alcove-basis} (with $x$ replaced by $y$).
Using formula~\eqref{eqn:alcove-gi}, we see that
\[
g_i(\bL^{z^{(i)} - \varpi_{i-1}}) = \dot \sP(y) \bt^{\sP(y)^{-1}(y^{(i)} - \varpi_{i-1})} \bL^{\sP(y)^{-1}(x^{(i)} - y^{(i)})} = \bL^{x^{(i)} - \varpi_{i-1}}.
\]
Thus, the image of~\eqref{eqn:convcalc-point} under the map $q$ from the proof of Lemma~\ref{lem:globconv-irred} is
\[
(0, \bL^{y^{(1)}}, \bL^{y{(2)} - \varpi_1}, \ldots, \bL^{y^{(n)} - \varpi_{n-1}}, \bL^{x^{(1)}}, \bL^{x^{(2)} - \varpi_1}, \ldots, \bL^{x^{(n)} - \varpi_{n-1}}),
\] 
which corresponds to $(\ubL^y, \ubL^x) \in \Fl \times \Fl$ under~\eqref{eqn:conv-spc}.
\end{proof}

We are finally ready to prove the main theorem of the paper.

\begin{proof}[Proof of Theorem~\ref{thm:main}]
In the special case where $\lambda$ is a fundamental coweight, we have already established the result in Theorem~\ref{thm:main-fund}.

For general $\lambda$, we proceed by induction on $\lambda_1 - \lambda_n$.  If $\lambda_1 - \lambda_n = 0$, then $\lambda$ is a constant coweight, and the result holds by Lemma~\ref{lem:adm-constant}.  Suppose now that $\lambda_1 - \lambda_n > 0$.  Then there is some $k$ with $1 \le k \le n-1$ such that $\lambda_k > \lambda_{k+1}$.  Let $\mu = \lambda - \varpi_k$.  The coweight $\mu$ is again dominant, and it satisfies $\mu_1 - \mu_n < \lambda_1 - \lambda_n$, so by induction, $X_\mu = \overline{\bGr_\mu}$.  In particular, $X_\mu$ is irreducible, so by Lemma~\ref{lem:globconv-irred}, so is $X_\mu \ttimes_\C X_{\varpi_k}$.

Let $x$ be a $\lambda$-permissible alcove.  By Theorem~\ref{thm:convolve-comb}, there exists a $\mu$-permissible alcove $y$ such that $x$ is in relative position $\varpi_k$ with respect to $y$.  By Lemma~\ref{lem:alcove-basis}, $(\ubL^y, \ubL^x)$ lies in $(X_\mu \ttimes_\C X_{\varpi_k})_\spc$, and then by Corollary~\ref{cor:globconv-image}, its image $\bmult(\ubL^y, \ubL^x) = \ubL^x$ lies in $\overline{\bGr_\lambda} \cap \bGr_\spc$.  We  have established~\eqref{eqn:main-claim}, so $X_\lambda = \bGr_\lambda$.
\end{proof}

\end{document}